\def\xyellowspace{%
  \sbox0{\colorbox{yellow}{\strut\ }}%\mathbf{}
  \dimen0=\wd0\relax
  \hskip0pt\cleaders\box0\hskip\dimen0\hskip0pt}
\gdef\makeyellowspace{\let \xyellowspace\catcode`\ =\active}%
\def\?#1{\colorbox{yellow}{\strut#1}}
\DeclareFontFamily{OT1}{rsfs10}{}
\DeclareFontShape{OT1}{rsfs10}{m}{n}{ <-> rsfs10 }{}
\DeclareMathAlphabet{\mathscript}{OT1}{rsfs10}{m}{n}
\DeclareMathOperator{\im}{Im}       % \im    = immagine (di una funzione)
\DeclareMathOperator{\Spec}{Spec}   % \Spec  = Spec di un anello
\DeclareMathOperator{\Hom}{Hom}     % \Hom   = gruppo Hom (diritto)
\DeclareMathOperator{\Tors}{Tors}    % \Hom   = sottogruppo di torsione
\DeclareMathOperator{\Pic}{Pic}     % \Pic   = gruppo di Picard
\DeclareMathOperator{\Cl}{Cl}       % \Cl    = gruppo dei divisori di Weil mod. linear equivalence
\DeclareMathOperator{\Div}{Div}     % \Div   = gruppo dei divisori di Weil
\DeclareMathOperator{\Cox}{Cox}     % \Cox   = anello (algebra) di Cox
\DeclareMathOperator{\Aut}{Aut}     % \Aut   = gruppo degli automorfismi
\DeclareMathOperator{\rk}{rk}       % \rk    = rango
\DeclareMathOperator{\Sing}{Sing}   % \Sing  = luogo singolare
\DeclareMathOperator{\Br}{Br}       % \Br    = branching locus
\DeclareMathOperator{\Nef}{Nef}     % \Nef   = cono dei divisori nef
\DeclareMathOperator{\codim}{codim} % \codim = codimensione
\DeclareMathOperator{\Char}{char}   % \Char  = caratteristica di un anello
\DeclareMathOperator{\conv}{Conv}   % \conv  = inviluppo convesso
\title[1-coverings and Mori Dream Spaces]{\'{E}tale coverings in codimension 1 with applications to Mori Dream
Spaces}
\author[M. Rossi]{Michele Rossi}
\date{\today}
\address{Dipartimento di Matematica e Applicazioni, Universit\`a di Milano-Bicocca,\newline
Ed.~U5-Ratio, via Roberto Cozzi, 55, 20125, Milano} \email{michele.rossi@unimib.it}
\thanks{The author was partially supported by the MIUR-PRIN 2010-11 Research Funds ``Geometria delle Variet\`{a}
Algebriche'' and MIUR-PRIN 2022 Research Funds ``Moduli spaces and birational geometry''. He is also supported by the I.N.D.A.M. as a member of the G.N.S.A.G.A.}
\def \wrt{with respect to }
\def \a{\alpha }
\def \b{\beta }
\def \d{\delta }
\def \Ga{\Gamma }
\def \Si{\Sigma }
\def \g{\gamma}
\def \vf{\varphi}
\def \ét{\'{e}tale}
\def \q{\mathbf{q}}
\def \pp{\mathbf{p}}
\def \v{\mathbf{v}}
\def \x{\mathbf{x}}
\def \1{\mathbf{1}}
\def \0{\mathbf{0}}
\def\P{{\mathbb{P}}}
\def\p2{\mathbb{P}^2}
\def\p3{\mathbb{P}^3}
\def\p4{\mathbb{P}^4}
\def\cO{\mathcal{O}}
\def\rk{\operatorname{rk}}
\def\GL{\operatorname{GL}}
\def\Z{\mathbb{Z}}
\def\C{\mathbb{C}}
\def\K{\mathbb{K}}
\def\R{\mathbb{R}}
\def\Q{\mathbb{Q}}
\def\N{\mathbb{N}}
\def\T{\mathbb{T}}
\def\X{\mathfrak{X}}
\def\cox{\mathcal{C}ox}
\def\irr{\mathcal{I}rr}
\def\cS{\mathcal{S}}
\def\SF{\mathcal{SF}}
\def\G{\mathcal{G}}
\def\I{\mathcal{I}}
\def\Ls{\mathcal{L}}
\def\Ga{\Gamma}
\def\De{\Delta}
\def\sqm{s\,$\Q$m }
\def\pet{\pi_1^{\text{\rm{\'{e}t}}}}
\def\Bmu{\boldsymbol\mu}
\def\U1{\mathfrak{U}^{(1)}}
\theoremstyle{plain}
\newtheorem{theorem}{Theorem}[section]
\newtheorem{proposition}[theorem]{Proposition}
\newtheorem{thm-def}[theorem]{Theorem--Definition}
\newtheorem{corollary}[theorem]{Corollary}
\newtheorem{lemma}[theorem]{Lemma}
\newtheorem*{a-proposition}{Proposition}
\theoremstyle{remark}
\newtheorem{remark}[theorem]{Remark}
\newtheorem{remarks}[theorem]{Remarks}
\newtheorem*{notation}{Notation}
\theoremstyle{definition}
\newtheorem{definition}[theorem]{Definition}
\newtheorem*{step I}{Step I}
\newtheorem*{step II}{Step II}
\newtheorem*{step III}{Step III}
\newtheorem*{step IV}{Step IV}
\newtheorem*{acknowledgements}{Acknowledgements}
\newcommand{\halfline}{\vskip6pt}
\begin{document}

 %\pagestyle{empty}
 %\DefineParaStyle{Maple Heading 4}
 %\DefineParaStyle{Maple Heading 2}
 %\DefineParaStyle{Maple Text Output}
 %\DefineParaStyle{Maple Bullet Item}
 %\DefineParaStyle{Maple Warning}
 %\DefineParaStyle{Maple Error}
 %\DefineParaStyle{Maple Dash Item}
% \DefineParaStyle{Maple Heading 3}
 %\DefineParaStyle{Maple Heading 1}
 %\DefineParaStyle{Maple Title}
 %\DefineParaStyle{Maple Normal}
% \DefineCharStyle{Maple 2D Input}
% \DefineCharStyle{Maple Maple Input}
 %\DefineCharStyle{Maple 2D Output}
 %\DefineCharStyle{Maple 2D Math}
 %\DefineCharStyle{Maple Hyperlink}

\begin{abstract} The present paper is devoted to developing relations between Galois \ét coverings in codimension 1
and \ét fundamental groups in codimension 1 of algebraic varieties, aimed to studying the topology of Mori dream
spaces. In particular, the universal \ét covering in codimension 1 of a non-degenerate toric variety and a canonical
Galois \ét covering in codimension 1 of a Mori dream space (MDS) are exhibited. Sufficient conditions for the latter
being either still a MDS or the universal \ét covering in codimension 1 are given. As an application, a canonical
toric embedding of K3 universal coverings, of Enriques surfaces which are Mori dream, is described.
\end{abstract}
\keywords{Galois \ét covering, \ét covering in codimension 1, \ét fundamental group, non-degenerate toric variety,
Gale duality, fan matrix, weight matrix, small $\Q$-factorila modification, Mori dream space, Minimal Model Program,
weak Lefschetz theorem, Enriques surface, K3 surface.}
% \PACS{PACS code1 \and PACS code2 \and more}
\subjclass[2010]{ 14H30 \and 14E20\and 14M25 \and 14E30}

\maketitle

\tableofcontents

\section{Introduction}

The main topics of the present paper are \emph{\ét coverings in codimension 1} between algebraic varieties, in the
following simply called \emph{1-coverings}, aimed to study the topology of Mori dream spaces (MDS). A 1-covering is
a finite morphism, \ét over a Zariski open subset of the domain, whose complementary closed subset has codimension
strictly greater than 1 (see Definition~\ref{def:1-covering}).

 1-coverings were studied  in some detail by F.~Catanese in \cite{Catanese}, although in the slightly broader sense of
 \emph{quasi-\ét} morphisms, i.e. \emph{quasi-finite} morphisms, \ét in codimension 1. More recently, there was a
 renewed interest about this topic in relation with the Koll\`{a}r conjecture asserting that the local fundamental
 group (that is the fundamental group of the link) of a log terminal singularity should be finite \cite[Question
 26]{Kollar}. This fact motivated a number of very interesting results about finiteness condition of (local)
 fundamental groups of algebraic varieties and relations between the fundamental group of the regular locus and the
 global one, both over the complex field and in positive characteristic: see e.g.
 \cite{BC-RGST},\cite{C-RST},\cite{GKP},\cite{Stibitz},\cite{Tian-Xu},\cite{Xu}. At this purpose, notice that, in the
 very recent preprint \cite{Braun}, L.~Braun gives a proof of the Koll\`{a}r conjecture.

  In this context, the study of 1-coverings and related (\ét) fundamental groups is motivated by giving an algebraic
  proof of W.~Buczy{\'n}ska's results, appeared in 2008 in a still unpublished paper \cite{Buczynska}, to extending to MDS
  some results previously obtained for $\Q$-factorial, complete toric varieties, in the paper \cite{RT-QUOT}, jointly
  written with L.~Terracini.

Buczy{\'n}ska's approach is firstly resumed, by revising her topological results in \cite{Buczynska} from the
algebraic-\ét point of view. In particular, the \emph{\ét fundamentale group in codimension 1} is introduced (see
Definition~\ref{def:etalep11}) as the algebraic reformulation of the same topological notion given by
\cite[Def.~3.1]{Buczynska}: namely, the former is the \emph{pro-finite completion} of the latter. Then, what has been
here obtained about relations between 1-coverings and the algebraic fundamental group in codimension 1 is holding on a
general algebraically closed field $\K=\overline{\K}$, with $\Char \K=0$. This is the content of \S~\ref{ssez:pi11}
and \S~\ref{ssez:Galois 1-cov}: the notion of the \ét fundamental group in codimension 1 looks to be a new one in the
literature, at least as far as the author knows. Then the theory here developed seems to be an original one, although
essentially analogous to the theory of the global \ét fundamental group, quickly recalled in \S~\ref{ssez:etalepi1}.
As observed in Remarks~\ref{rem:K=C+finite} and \ref{rem:K=C+normal} results here obtained, like e.g.
Theorem~\ref{thm:excisione}, Corollary~\ref{cor:pi1smooth} and Theorem~\ref{thm:pi1normale}, do not imply their
analytical analogous statements proved by Buczy{\'n}ska in \cite{Buczynska}, unless the involved fundamental groups are
finite, as in the important case of toric varieties, but probably of more general MDS after \cite{BCHMcK}, \cite{GOST}
and a very recent Braun result proving that the fundamental group of a weak Fano variety is finite \cite{Braun} (see
consideration ending up Remark~\ref{rem:K=C+finite}).

Consequently \S~\ref{sez:toric} is devoted to apply results of previous sections to toric varieties, so obtaining a
natural field extension of results proved in \cite[\S~4]{Buczynska}. In particular, Theorem~\ref{thm:QUOT+} shows that
a non-degenerate toric variety always admits a universal 1-covering, which is still a non-degenerate toric variety:
this is an extension of \cite[Thm.~2.2]{RT-QUOT} in which the same statement was proved for a complex, complete and
$\Q$-factorial toric variety. Let me here recall that, as for the universal covering, in general, an algebraic variety
does not admit a universal 1-covering. Then the main interest of Theorem~\ref{thm:QUOT+} resides in defining a class
of algebraic varieties, namely non-degenerate toric varieties, giving an exception toward such a general fact.

Recalling that a MDS has a canonical toric embedding, what proved in \S~\ref{sez:toric} applies to give interesting
consequences on the topology of a MDS. This is the content of \S~\ref{sez:MDS}, where we considered a slightly broader
(\wrt MDS) category of spaces called, coherently with \cite{R-wMDS}, \emph{weak} Mori dream spaces (wMDS). A wMDS
admitting a projective closed embedding is a MDS in the usual Hu-Keel sense \cite{Hu-Keel}. Probably the main result
here obtained is the construction of a canonical 1-covering $\widetilde{X}$ of a wMDS $X$, given by
Theorem~\ref{thm:notors1-cov}. In particular, such a canonical 1-covering comes with a canonical closed embedding into
the universal 1-covering $\widetilde{W}$ of the the canonical ambient toric variety $W$ of $X$, whose existence is
guaranteed by Theorem~\ref{thm:QUOT+}. Unfortunately, this canonical embedding between 1-coverings does
not turn out to be a \emph{neat} embedding (see Def.~\ref{def:neat}), in general: but the latter is shown to be
equivalent with the condition that the 1-covering $\widetilde{X}$ is still a wMDS whose Cox ring is isomorphic to the Cox ring of $X$ (see Theorem~\ref{thm:notors1-cov}).

The following \S~\ref{ssez:neat} and \S~\ref{ssez:gruppifondamentali} are dedicated to studying properties of the
canonical embedding $\widetilde{X}\hookrightarrow\widetilde{W}$ and the topology of $\widetilde{X}$ itself,
respectively. In particular, as a consequence of results of M.~Artebani and A.~Laface~\cite{AL}, S.-Y.~Jow \cite{Jow}
and G.~Ravindra and V.~Srinivas \cite{RS}, Proposition~\ref{prop:s.c.neat} gives some sufficient conditions for
$\widetilde{X}\hookrightarrow\widetilde{W}$ being a neat embedding, hence the canonical 1-covering $\widetilde{X}$
still being a wMDS. On the other hand, by applying deep results of M.~Goresky and R.~Mac~Pherson \cite{G-MacPh},
Theorem~\ref{thm:WeakLefschetz} gives a sufficient condition for the canonical 1-covering
$\widetilde{X}\longrightarrow X$ being the universal one, in the complex case $\K=\C$.

The present paper is organized as follows. \S~\ref{ssez:etalepi1} is dedicated to quickly recall standard facts on \ét
coverings and \ét fundamental groups and to proving Excision Theorem~\ref{thm:excisione}: it gives an algebraic-\ét
counterpart of \cite[Thm.~3.4]{Buczynska} (see Remark~\ref{rem:K=C+finite}). \S~\ref{ssez:esistenza} is devoted to
recalling relations between the \ét fundamental group and the universal covering, when existing, of an algebraic variety.
The following \S~\ref{ssez:pi11} and \S~\ref{ssez:Galois 1-cov} introduce the \ét fundamental group in codimension 1
and local Galois 1-coverings: these are essentially new topics. Let me underline that, in this context the adjective
\emph{local} is associated with Galois 1-covering and not to a concept of fundamental group, so avoiding any confusion
with the concept of local fundamental group, recently studied in connection with Kollar conjecture, as already
mentioned above, and not treated in the present paper.  Main result of this section is Theorem~\ref{thm:pi1normale},
relating the \ét fundamental group in codimension 1 of a normal variety with the \ét fundamental group of its regular
locus, so giving an algebraic-\ét counterpart of \cite[Cor.~3.10]{Buczynska}. Then \S~\ref{ssez:pullback} ends up
\S~\ref{sez:1-covering} by fixing notation on divisors' pull back. As
already described above, \S~\ref{sez:toric} and \S~\ref{sez:MDS} are devoted to applying results and techniques,
developed in \S~\ref{sez:1-covering}, to toric varieties and wMDS, respectively. The last \S~\ref{sez:esempi} gives
evidences of both positive and negative occurrences in Theorems~\ref{thm:notors1-cov} and \ref{thm:WeakLefschetz}, by means of three interesting
examples. The first example is given by Example~\ref{ssez:HK}, describing a case in which the canonical 1-covering is still a
wMDS (actually a MDS): this example was borrowed from id no. 97 in \cite{CRdb}. The second example is a new one, as far as the author's knowledge allows to conclude: it describes a MDS whose canonical 1-covering is the universal 1-covering admitting a neat embedding into the universal 1-covering of the canonical ambient toric variety. The third example is given by very special
families of Enriques surfaces which are Mori dream spaces. Their canonical 1-covering is also their universal \ét
covering, hence a K3 surface which can never be a MDS, as admitting an infinite automorphism group. In this case
Theorem~\ref{thm:notors1-cov} gives interesting information about this kind of special Enriques surfaces, their K3
universal coverings and the associated canonical toric embeddings (see Cor.~\ref{cor:Enriques} and Rem.~\ref{rem:}).

Main original contributions of the present paper are then given by:
\begin{itemize}
  \item the theory of the \ét fundamental group in codimension 1 and local Galois 1-coverings, developed in \S~1.3
      and \S~1.4, giving the algebraic-\ét counterpart of Buckcinska's results provided in \cite{Buczynska};
  \item Theorem~\ref{thm:QUOT+} extending the main result (Thm.~2.2) of \cite{RT-QUOT} from complex, $\Q$-factorial,
      complete toric varieties to a more general non-degenerate toric variety over $\K$;
  \item Theorem~\ref{thm:notors1-cov} providing an analogue of the previous result in the broader context of MDS: in
      particular \S~\ref{ssez:neat} and \S~\ref{ssez:gruppifondamentali} study conditions to getting a universal
      1-covering of a MDS with a neat canonical toric embedding.
\end{itemize}

\section{\'{E}tale covering in codimension 1 (1-covering)}\label{sez:1-covering}

The present section is devoted to recalling and extend to any algebraically closed field $\K$, with $\Char \K=0$,
concepts and results introduced in \cite[\S~3]{Buczynska}, under the assumption $\K=\C$. Notice that results here
given cannot in general replace Buczy{\'n}ska's results in \cite{Buczynska} about the \emph{fundamental group in
codimension 1} of a complex algebraic variety, since known conditions on the pro-finite completion $\widehat{G}$ of a
group $G$ do not transfer to the group $G$ itself, except for the particular case $G$ \emph{finite}.

\halfline
\emph{Notation}. Throughout the present paper:
 \begin{itemize}
   \item an algebraic variety is an integral scheme of finite type over an algebraically closed field $\K$ \cite[Prop.~II.4.10]{Hartshorne}; then it is always implicitly irreducible and reduced; the field $\K$ is always assumed with $\Char\K=0$;
   \item a \emph{small} closed subset $C$ of an algebraic variety $X$ is a
Zariski closed $C\subset X$ such that $\codim_X C> 1$; the complementary set $X\setminus C$ is called a \emph{big}
open subset of $X$;
   \item a morphism of algebraic varieties $\phi:Y\longrightarrow X$, is called an \emph{\ét
covering} if it is a \emph{finite \ét morphism} \cite[Def.~5.2.1]{Szamuely}; since $X$ is irreducible then $\phi$ is surjective with finite fibres
of constant cardinality called the \emph{degree} of $\phi$ ($\deg\phi$).
 \end{itemize}
The following is the key definition of the present paper: what is meant by \emph{\ét covering in codimension 1} of an
algebraic variety $X$.

\begin{definition}[1-covering]\label{def:1-covering}
  Let $\phi:Y\longrightarrow X$ be a morphism of algebraic varieties over $\K$. Then $\phi$ is called an
  \emph{\'{e}tale covering in codimension 1} (or simply a \emph{1-covering}) if it is finite and \'{e}tale in
  codimension 1, that is, there exists a \emph{small} Zariski closed subset $C\subset X$ such that
  $$\xymatrix{\phi|_{Y_C}:Y_C:=\phi^{-1}(X\setminus C)\ar[r]&X\setminus C}$$
  is a finite and \'{e}tale morphism onto the the complementary \emph{big} open subset $X\setminus C$. The smallest (with respect to inclusion) small
  closed $C$ satisfying this condition is called the \emph{branching locus of $\phi$} and denoted by $C=\Br \phi$.\\
  The degree of the \ét covering $\phi|_{Y_C}$ is called the  \emph{degree of the 1-covering $\phi$}, that is
  $\deg\phi:=\deg(\phi|_{Y_C})$.\\
  Recall that the automorphism group $\Aut(\phi)$ of an \ét covering $\phi:Y\longrightarrow X$ is the group of
  isomorphisms $\vf:Y\longrightarrow Y$ such that $\phi=\phi\circ\vf$. A \emph{connected} finite \ét covering is called \emph{Galois} if
  $|\Aut(\phi)|=\deg\phi$. By the following Proposition~\ref{prop:lifting} this is the same of asking that
  $\Aut(\phi)$ acts transitively over the fibres.\\
  A \emph{Galois} 1-covering is a 1-covering $\phi:Y\longrightarrow X$ such that $|\Aut(\phi|_{Y_C})|=\deg\phi$, where
  $C=\Br\phi$. This means that $\Aut(\phi|_{Y_C})$ acts transitively over the fibres of points in $X\setminus C$. In
  the following we will denote $$\Aut^{(1)}(\phi):=\Aut(\phi|_{Y_C})$$
\end{definition}

Recall the following fundamental result, as it will be very useful in the following.

\begin{theorem}[Zariski-Nagata Purity Theorem - See e.g. Thm.~5.2.13 in \cite{Szamuely}]\label{thm:ZNpurity}
  Let $\phi:Y\longrightarrow X$ be a finite surjective morphism of integral schemes with $Y$ normal and $X$ regular. Then the closed subscheme $\Br \phi\subset X$, over which $\phi$ is not \ét, must be of pure codimension 1.
\end{theorem}

\begin{corollary}\label{cor:ZNpurity}
  Every 1-covering $\phi:Y\longrightarrow X$ with $Y$ normal, restricts to give an \ét covering of the regular locus $X_{\text{\rm reg}}$.
\end{corollary}

\subsection{The \'{e}tale fundamental group of an algebraic variety}\label{ssez:etalepi1}
Recall that the \emph{\'{e}tale} (or \emph{algebraic}) \emph{fundamental group} of an algebraic variety $X$,
with a chosen \emph{base point} given by a closed point $x\in X$, is defined as the automorphism group of the fiber functor $F^x$ assigning to each finite
\ét covering $\phi:Y\longrightarrow X$ the finite set given by its fibre $F^x(\phi):=\phi^{-1}(x)$ over the base point
$x$ (see e.g. \cite[Def.~5.4.1]{Szamuely}). Then the \ét fundamental group is a functor from the category of finite \ét
coverings to the category of groups. Grothendieck proved that it is pro-representable \cite{SGA1},
\cite[Prop.~5.4.6]{Szamuely}, that is it can be represented as the inverse limit
\begin{equation*}
  \pet(X,x):=\varprojlim_{i\in \mathfrak{I}}\Aut(\phi_i)
\end{equation*}
running through all the Galois coverings $\{X_i\stackrel{\phi_i}\to X\}_{i\in \mathfrak{I}}$.

  Recall the following key fact about \ét morphisms:

  \begin{proposition}[\cite{Milne}, Cor.~2.16\,;\ \cite{Szamuely}, Cor.~5.3.3]\label{prop:lifting}
    Let $\phi:Y\longrightarrow X$ be a finite \ét covering of an algebraic variety $X$ and $f:Z\longrightarrow X$ be a morphism from a connected scheme $Z$. Let $\vf,\vf':Z\longrightarrow Y$ be morphisms lifting
    $f$, that is such that
    \begin{equation*}
    \phi\circ\vf=f=\phi\circ\vf'\ :\quad  \xymatrix{&&Y\ar[d]^-\phi\\
                Z\ar@<-.5ex>[urr]_-{\vf'}\ar@<.5ex>^-{\vf}[urr]\ar[rr]_-f&&X}
    \end{equation*}
    If there exists $z\in Z$ such that $\vf(z)=\vf'(z)$ then $\vf=\vf'$.
  \end{proposition}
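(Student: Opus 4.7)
The plan is to realise the locus where $\vf$ and $\vf'$ agree as a clopen (open and closed) subset of $Z$, and then to exploit connectedness.

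First I would consider the product morphism $(\vf,\vf'):Z\longrightarrow Y\times_X Y$, which is well defined precisely because both $\vf$ and $\vf'$ lift $f$. The key algebraic input I would invoke is the characterisation of étale morphisms via the diagonal: since $\phi:Y\to X$ is étale, it is in particular unramified, and an unramified morphism is exactly one for which the relative diagonal $\Delta_{Y/X}:Y\hookrightarrow Y\times_X Y$ is an \emph{open} immersion (see e.g.\ \cite[Prop.~3.5]{Milne} or \cite[Prop.~5.3.1]{Szamuely}). On the other hand, algebraic varieties are separated, so the diagonal is also a \emph{closed} immersion. Therefore $\Delta_{Y/X}(Y)$ is a clopen subscheme of $Y\times_X Y$.

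Next I would pull this clopen back through $(\vf,\vf')$, setting
\begin{equation*}
  W:=(\vf,\vf')^{-1}\bigl(\Delta_{Y/X}(Y)\bigr)\subseteq Z.
\end{equation*}
Being the preimage of a subset that is simultaneously open and closed, $W$ is clopen in $Z$. Set-theoretically, $W$ is precisely the locus of points $z\in Z$ at which $\vf(z)=\vf'(z)$. The hypothesis supplies one such point, so $W\neq\emptyset$. Since $Z$ is connected, this forces $W=Z$.

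Finally I would upgrade this set-theoretic equality to an equality of morphisms: because $W=Z$ as subschemes, the morphism $(\vf,\vf')$ factors through $\Delta_{Y/X}$, yielding a morphism $\psi:Z\to Y$ with $\Delta_{Y/X}\circ\psi=(\vf,\vf')$. Composing with the two projections $\mathrm{pr}_1,\mathrm{pr}_2:Y\times_X Y\to Y$ gives $\vf=\psi=\vf'$, as required. I do not expect any real obstacle here: once one has the clopen nature of the diagonal of an étale morphism at hand, the argument is essentially a single connectedness step; the only thing to be careful about is to work scheme-theoretically (via the fibre product and the diagonal) rather than just with closed points, so that the conclusion $\vf=\vf'$ is genuinely an equality of morphisms and not merely of underlying maps of sets.
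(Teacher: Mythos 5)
Your argument is correct, and it is precisely the standard proof of this rigidity statement: the paper does not actually prove Proposition~\ref{prop:lifting} but only cites it from \cite{Milne} and \cite{Szamuely}, and the proofs in those references are exactly your diagonal argument (the relative diagonal of an unramified morphism is an open immersion, separatedness makes it closed, so the equalizer of the two lifts is a clopen subscheme of the connected $Z$ that is nonempty by hypothesis). The one hypothesis worth flagging explicitly is that $\phi$ is separated --- which holds here because $Y$ and $X$ are varieties, hence separated over $\K$ --- since that is what makes $\Delta_{Y/X}$ closed and not merely open.
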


   A first consequence of Proposition~\ref{prop:lifting}, is that the transitive action of the Galois group
   $\Aut(\phi_i)$ can be represented by acting on $\phi_i^{-1}(x)$ with a subgroup of the group $\mathfrak{S}_i^x$ of
   cyclic permutations. In fact, every non-trivial automorphism of the representing fibre $\phi_i^{-1}(x)$ cannot fix
   any point.

\begin{proposition}[\cite{Szamuely}, Cor.~5.5.2]\label{prop:puntobase}
  For any $x,x'\in X$ there exists an isomorphism
  $$\pet(X,x)\cong\pet(X,x')$$
  well defined up to conjugation.
\end{proposition}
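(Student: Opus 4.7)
The plan is to build an isomorphism between the pro-representing automorphism groups by choosing coherent base points above $x$ and $x'$ across the directed system of Galois \ét coverings indexing (\ref{pi_1^et}), and then to show that two such choices differ by an inner automorphism. Fix a Galois \ét covering $\phi_i:X_i\longrightarrow X$ in the system. Proposition~\ref{prop:lifting} combined with the Galois assumption implies that $\Aut(\phi_i)$ acts simply transitively on each of the fibres $\phi_i^{-1}(x)$ and $\phi_i^{-1}(x')$, so any choice of base points $x_i\in\phi_i^{-1}(x)$ and $x'_i\in\phi_i^{-1}(x')$ produces identifications $\phi_i^{-1}(x)\cong\Aut(\phi_i)\cong\phi_i^{-1}(x')$ via the respective orbit maps, and hence an ``identity'' isomorphism $\Aut(\phi_i)\cong\Aut(\phi_i)$ depending on the chosen pair $(x_i,x'_i)$.

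To pass to the inverse limit, I need these choices to be coherent: for every covering morphism $\phi_{ij}:X_i\longrightarrow X_j$ in the system one should have $\phi_{ij}(x_i)=x_j$ and $\phi_{ij}(x'_i)=x'_j$. The transition maps $\phi_i^{-1}(x)\twoheadrightarrow\phi_j^{-1}(x)$ induced by $\phi_{ij}$ (and the fibre product diagram (\ref{prodottofibrato})) are surjective by Galois transitivity, so both $\{\phi_i^{-1}(x)\}_{i\in\mathfrak{I}}$ and $\{\phi_i^{-1}(x')\}_{i\in\mathfrak{I}}$ are inverse systems of nonempty finite sets with surjective transition maps. A standard compactness argument (Mittag--Leffler / K\"onig's lemma) yields nonempty inverse limits; picking one element in each provides coherent base points $(x_i)_i$ above $x$ and $(x'_i)_i$ above $x'$. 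The associated system of isomorphisms $\Aut(\phi_i)\cong\Aut(\phi_i)$ is then compatible with the morphisms $\phi_{ij*}$ over which (\ref{pi_1^et}) is performed, and passing to the inverse limit produces the desired isomorphism $\pet(X,x)\cong\pet(X,x')$.

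Finally, any two coherent choices of base points above $x$ differ by a compatible system $(h_i)_i$ with $h_i\in\Aut(\phi_i)$, i.e.\ by a single element $h\in\pet(X,x)$, and similarly above $x'$ by some $h'\in\pet(X,x')$. Tracing through the orbit-map construction, replacing the base points conjugates the resulting isomorphism by $h$ on the source and by $h'$ on the target. Hence the isomorphism is well defined up to conjugation, as claimed. The main technical obstacle is the existence of coherent base points, entirely captured by the nonemptiness of the inverse limits of the systems $\{\phi_i^{-1}(x)\}_i$ and $\{\phi_i^{-1}(x')\}_i$; everything else is routine bookkeeping encoded in (\ref{pi_1^et}) and (\ref{prodottofibrato}).
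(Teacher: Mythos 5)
Your proposal is correct and follows essentially the same route as the paper: choose coherent base points over $x$ and $x'$ in the directed system, use the Galois property to get compatible bijections of fibres, pass to the inverse limit, and observe that changing the base points conjugates the result. The only addition is your explicit justification (via nonemptiness of inverse limits of finite nonempty sets with surjective transition maps) that a coherent system of base points exists, a point the paper's proof silently assumes when it ``chooses'' the $x_i$ and $x'_i$.
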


\begin{proposition}[\cite{Murre}, Chap.~V and \cite{Szamuely}, \S~5.5, pg.~178]\label{prop:f*}
  Let $f:(Y,y)\longrightarrow (X,x)$ be a morphism of pointed algebraic varieties, that is  $x=f(y)$. Then
  there exists an induced homomorphism of \ét fundamental groups:
  \begin{equation*}
    \xymatrix{f_*:\pet(Y,y)\ar[r]&\pet(X,x)}
  \end{equation*}
\end{proposition}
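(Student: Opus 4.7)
The plan is to construct $f_*$ via base-change (pullback) along $f$ of Galois \ét coverings of $X$, then pass to the inverse limit. Given a Galois \ét covering $\phi_i:X_i\longrightarrow X$ in the direct system defining $\pet(X,x)$, with a chosen basepoint $x_i\in\phi_i^{-1}(x)$, form the fibred product $Y\times_X X_i$: the first projection $\mathrm{pr}_1:Y\times_X X_i\longrightarrow Y$ is \ét, since \ét morphisms are stable under base change, and its fibre over $y$ is canonically identified via $\mathrm{pr}_2$ with $\phi_i^{-1}(x)$. Let $Z_i\subset Y\times_X X_i$ be the connected component containing the point $(y,x_i)$; then $Z_i\longrightarrow Y$ is a connected \ét covering, which in general need not be Galois. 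Dominate it by a Galois \ét covering $\pi_j:Y_j\longrightarrow Y$ via the standard Galois closure construction (see e.g. \cite[Prop.~5.3.9]{Szamuely}), equipped with a basepoint $y_j\in\pi_j^{-1}(y)$ mapping to $(y,x_i)\in Z_i$. Composing with $\mathrm{pr}_2$ yields a morphism $g:Y_j\longrightarrow X_i$ with $g(y_j)=x_i$ and $\phi_i\circ g=f\circ\pi_j$.

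Next, I define $f_{*,ij}:\Aut(\pi_j)\longrightarrow\Aut(\phi_i)$ as follows. For $\alpha\in\Aut(\pi_j)$, the composition $g\circ\alpha$ is another lift of $f\circ\pi_j$ along $\phi_i$, and $g(\alpha(y_j))\in\phi_i^{-1}(x)$. Since $\phi_i$ is Galois, $\Aut(\phi_i)$ acts simply transitively on its fibre over $x$, so there is a unique $\beta\in\Aut(\phi_i)$ with $\beta(x_i)=g(\alpha(y_j))$; Proposition~\ref{prop:lifting} applied to the \ét morphism $\phi_i$ then forces the global equality $\beta\circ g=g\circ\alpha$. The same uniqueness principle implies that $\alpha\mapsto\beta$ is a group homomorphism, by comparing the values of both sides on the chosen basepoints.

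Finally, one has to check that the $f_{*,ij}$'s are compatible with the transition morphisms in the two direct systems defining $\pet(Y,y)$ and $\pet(X,x)$, so that they assemble into the required homomorphism $f_*$ on the inverse limits. Given $\phi_k\leq\phi_i$ in the direct system over $X$, the canonical morphism $Y\times_X X_i\longrightarrow Y\times_X X_k$ shows that the Galois closures $Y_j$ of connected components of $Y\times_X X_i$, as $i$ varies in $\mathfrak{I}$, form a cofinal subsystem in the direct system of Galois \ét coverings of $Y$ (all this parallels the construction of diagram~(\ref{prodottofibrato})). A coherent choice of basepoints in this subsystem, made exactly as in the proof of Proposition~\ref{prop:puntobase}, ensures that the square of group homomorphisms associated to $\phi_k\leq\phi_i$ commutes.

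The main obstacle is precisely this coherent bookkeeping of Galois closures and basepoints across the two inverse systems: without fixing a cofinal subsystem of Galois coverings of $Y$ obtained by Galois closure of pulled-back coverings of $X$, the homomorphisms $f_{*,ij}$ depend on extra choices and need not be compatible. Once the coherent system of basepoints $\{y_j\}$ is fixed, however, the limit homomorphism $f_*:\pet(Y,y)\longrightarrow\pet(X,x)$ is well defined, and by Proposition~\ref{prop:puntobase} it is canonical up to inner automorphism.
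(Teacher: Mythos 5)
Your construction is correct in substance, but it takes a genuinely different route from the paper. The paper works at the level of fibre functors: the pullback $f^\#(\phi')=\mathrm{pr}_1:Y\times_X X'\to Y$ satisfies $F^y\circ f^\#\cong F^{f(y)}$ (the fibre of $\mathrm{pr}_1$ over $y$ is canonically $\phi'^{-1}(x)$ via $\mathrm{pr}_2$), so an automorphism $\sigma$ of $F^y$ is simply transported to an automorphism $f^*\circ\sigma\circ(f^*)^{-1}$ of $F^{f(y)}$. Since $\mathrm{Aut}(F^y)$ already acts on the fibres of \emph{all} finite \ét coverings of $Y$, connected or not, Galois or not, this bypasses entirely the connected components, Galois closures and basepoint bookkeeping that occupy most of your argument. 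Your version instead stays inside the concrete inverse limit $\varprojlim_j\Aut(\pi_j)$ over Galois coverings of $Y$, which forces the detour through the component of $(y,x_i)$ and its Galois closure; the payoff is that the rigidity input (Proposition~\ref{prop:lifting}) and the role of basepoints are made completely explicit, and the core computation ($\beta\circ g=g\circ\alpha$ by uniqueness of liftings, multiplicativity by evaluating at $y_j$) is sound.

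One inaccuracy worth flagging: the Galois closures $Y_{j(i)}$ of the pulled-back coverings do \emph{not} in general form a cofinal subsystem of the Galois \ét coverings of $Y$ (take $X$ simply connected and $Y$ not: every pullback is trivial). Fortunately cofinality is not what you need. To define a homomorphism into $\varprojlim_i\Aut(\phi_i)$ it suffices to produce, for each $i$, the composite $\pet(Y,y)\twoheadrightarrow\Aut(\pi_{j(i)})\to\Aut(\phi_i)$ and to verify that these composites are compatible with the transition maps $\Aut(\phi_i)\to\Aut(\phi_k)$ for $\phi_k\leq\phi_i$ --- which is exactly the commuting-square check you carry out. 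So the argument goes through once that sentence is corrected; only the phrase ``cofinal subsystem'' should be replaced by ``compatible system of homomorphisms indexed by $\mathfrak{I}$''.
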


\begin{remark}\label{rem:K=C}
  For $\K=\C$ the Riemann Existence Theorem \cite[Thm.~XII.5.1]{SGA1} gives a canonical isomorphism between the \ét
  fundamental group $\pet(X,x)$ and the \emph{pro-finite completion} of the fundamental group
  $\pi_1(X^{\text{an}},x)$, that is
  \begin{equation*}
    \pet(X,x)\cong \widehat{\pi}_1(X^{\text{an}},x):= \varprojlim_{N\lhd
    \pi_1(X^{\text{an}},x)}\left(\left.\pi_1(X^{\text{an}},x)\ \right/\ N\right)
  \end{equation*}
  where $N$ ranges through all the normal subgroups with finite index of $\pi_1(X^{\text{an}},x)$
  \cite[Cor.~5.2]{SGA1}. Notice that $\pi_1(X^{\text{an}},x)$ naturally maps onto each of its quotients, giving rise
  to a canonical map $\pi:\pi_1(X^{\text{an}},x)\longrightarrow\widehat{\pi}_1(X^{\text{an}},x)$.
  If $\pi_1(X^{\text{an}},x)$ is a \emph{finite group} then $\pi$ is an \emph{isomorphism}.

    Propositions~\ref{prop:lifting}, \ref{prop:puntobase} and \ref{prop:f*} are  generalizations, to
    every algebraic closed field $\K$ with $\Char\K=0$, of well known topological analogous results. In particular,
    for $\K=\C$, Prop.~\ref{prop:puntobase} can be obtained as an immediate consequence, passing to pro-finite
    completions, of the isomorphism $\pi_1(X^{\text{an}},x)\cong\pi_1(X^{\text{an}},x')$ obtained by choosing a path
    connecting $x$ and $x'$.
\end{remark}

In a sense, the following result, which is of fundamental importance for what follows, reverses Corollary~\ref{cor:ZNpurity} of the Zariski–Nagata Purity Theorem~\ref{thm:ZNpurity}.

\begin{lemma}\label{lem:estensione}
Let $X$ be a normal algebraic variety and $X_0\subseteq X$ be a big open subset.
Then, a finite \ét
covering of $X_0$
$$\phi:U\longrightarrow X_0$$
can be extended to give  a 1-covering
$$\overline{\phi}:\overline{U}\longrightarrow X$$
with $\overline{U}$ a normal algebraic variety and $\Br(\overline{\phi})\subseteq C:=X\setminus X_0$.
\end{lemma}

\begin{proof}
    Let $\Spec(A)=:V\subseteq X$ be an open Zariski subset and set $V_0=V\cap X_0$. Then $C\cap V=V\setminus V_0$ is a Zariski closed subset of codimension $\ge 2$ in $V$, that is $C\cap V=\mathcal{V}(I)$ with $I=(f_1,\ldots, f_c)$ a suitable ideal of $A$. Then
    \begin{equation*}
      A_i:=A_{f_i}\ ,\quad V_{i}:=\Spec(A_{i})\ :\quad V_0=\bigcup_{i=1}^c V_{i}
    \end{equation*}
    Both $A$ and $A_i$ are domains so that their fields of fractions are well defined. Since $V_i$ is a Zariski open of $V$, those fields of fractions turn out to be isomorphic, that is
    $$K:=K(A)\cong K(A_i)\cong \cO_\xi$$
     where $\cO_\xi$ is the local ring of a generic point $\xi\in V_i\subseteq V$ (see e.g. \cite[Ex.~II.3.6]{Hartshorne}). Consider $U_i=\phi^{-1}(V_i)$ and the finite étale covering
     $$\phi_i=\phi|_{U_i}:U_i\longrightarrow V_i$$
     Then $U_i=\Spec(B_i)$ where $B_i$ is a finitely generated $A_i$-module and a domain, so that $L_i:=K(B_i)$ is well defined and turn out to be a finite and (and separable, as $\Char K=0$) field extension of $K(A_i)\cong K$. Let $\widetilde{L}$ be the smallest subfield of the algebraic closure $\overline{K}$ containing every $L_i$. Then $\widetilde{L}$ is still a finite field extension of $K$: infact, every $L_i$ is a finite (hence algebraic) extension of $K$ (see e.g. \cite[Prop.~V.1.1]{Lang}) so that $\widetilde{L}$ is a $K$-vector subspace of a finitely generated field extension $K(\a_1,\cdots,\a_N)$ of $K$, with $\a_i$ algebraic over $K$, and then a finite extension of $K$ \cite[Prop.~V.1.6]{Lang}.
     Notice that there are canonical ring morphisms
     \begin{equation*}
       A\hookrightarrow A_i\hookrightarrow B_i\hookrightarrow L_i\hookrightarrow\widetilde{L}
     \end{equation*}
    so that one can consider the integral closure $\widetilde{B}$ of $A$ in the field $\widetilde{L}$ and the induced natural map $A\to \widetilde{B}$ giving rise to a map of affine schemes
    \begin{equation*}
      \widetilde{\phi}:\widetilde{U}:=\Spec(\widetilde{B})\longrightarrow \Spec(A)=V
    \end{equation*}

    \textbf{Finiteness of} $\widetilde{\phi}$. By Prop.~5.17 in \cite{A-McD}, $\widetilde{B}$ is a submodule of a free finitely generated $A$-module. Since $A$ is a Noetherian ring then $\widetilde{B}$ turns out to be a finitely generated $A$-module (see e.g. Propositions~6.2,~6.5 in \cite{A-McD}).

    \textbf{Local compatibility with} $\phi_i$. For any $i$ consider the tensor product $$\widetilde{B}\otimes_A A_i$$
    There is a natural morphism $\widetilde{B}\otimes_A A_i\longrightarrow B_i$
    giving rise to the following commutative diagram of ring morphisms
    \begin{equation*}
      \xymatrix{A_i\ar@{^(->}[r]&\widetilde{B}\otimes A_i\ar[r]& B_i\\
                A\ar@{^(->}[u]\ar@{^(->}[r]&\widetilde{B}\ar@{^(->}[u]\ar[ur]&}
    \end{equation*}
    and therefore the associated commutative diagram of schemes morphisms
    \begin{equation*}
      \xymatrix{V_i\ar@{^(->}[d]&\widetilde{U}\times_V V_i\ar[d]\ar[l]& U_i\ar@/_2pc/[ll]_-{\phi_i}\ar[l]\ar[dl]\\
                V&\widetilde{U}\ar[l]_-{\widetilde{\phi}}&}
    \end{equation*}
    so that, by construction, $\phi_i$ turns out to be the localization over $V_i$ of $\widetilde{\phi}$.

    \textbf{Ramification of} $\widetilde{\phi}$. By construction $\widetilde{\phi}$ is finite and \ét over $V_0$ but it can ramify over the complement $V\setminus V_0=C\cap V$, that is a small closed subset of $V$. Then, $\Br(\widetilde{\phi})$ has codimension $\ge 2$ in $V$, so that $\widetilde{\phi}$  is a 1-covering.

    \textbf{Normality of} $\widetilde{U}$. Notice that $\widetilde{B}$ is integrally closed by construction, as it is a finitely generated $A$-module and the integral closure of $A$ in $\widetilde{L}$.

     \textbf{Globalization and construction of} $\overline{U}$. To definitely prove the statement, consider an affine open covering
     $$\{V_j=\Spec (A_j)\}_{j=1}^s$$
     of $X$ and repeat the previous construction for any $j$, so getting $s$ 1-coverings $$\widetilde{\phi}_j:\widetilde{U}_j\longrightarrow V$$
     Following the lines of [Iitaka \S~2.14] one gets a global 1-covering
     $$\overline{\phi}:\overline{U}=\bigcup_j\widetilde{U}_j\longrightarrow \bigcup_j V_j=X$$
     extending the \ét covering $\phi:U\longrightarrow X_0$.
  \end{proof}

\begin{remark}\label{rem:estensione}
 Consider the case $\K=\C$ and let $X$ be a smooth complex algebraic variety. Let $X^{\text{an}}$ be the
 corresponding complex manifold endowed with the analytic topology, \wrt which $X^{\text{an}}$ turns out to be
 path-connected and semi-locally simply connected. Then the Riemann Existence Theorem \cite[Thm.~XII.5.1]{SGA1}
 establishes a categorical equivalence between the category of finite \ét coverings of $X$ and the category of finite
 topological coverings of $X^{\text{an}}$ \cite{GR},\cite[thm.~3.4]{Milne}. In particular, this implies that the
 analytic counterpart of Definition~\ref{def:1-covering} is \cite[Def.~3.13]{Buczynska}. Then Lemma~\ref{lem:estensione} implies and improves \cite[Lemma~3.15]{Buczynska}.
\end{remark}

\begin{theorem}[compare with Cor.~5.2.14 in \cite{Szamuely}]\label{thm:excisione}
  Let $X$ be a smooth  algebraic variety. Then there is an equivalence between the category of finite \ét coverings of $X$ and the category of finite \ét coverings of a big open subset $X_0\subseteq X$.

  In particular, for any point
  $x\in X_0$, the inclusion $i:X_0\hookrightarrow X$ induces an isomorphism
  $$\xymatrix{i_*:\pet(X_0,x)\ar[r]^-\cong&\pet(X,x)}$$
\end{theorem}

\begin{proof} Being $Y\twoheadrightarrow X$ an \ét covering of $X$, the functor giving the equivalence is constracted by sending $Y\mapsto Y\times_{X} X_0$.
Its fully faithfulness is a direct consequence of Lemma~\ref{lem:estensione} and Corollary~\ref{cor:ZNpurity} of the Zariski-Nagata purity Theorem~\ref{thm:ZNpurity}.

Finally the fact that $i_*$ gives an isomorphism follows by the definition of \ét fundamental groups as automorphism groups of fiber functors.
\end{proof}

\begin{remark}\label{rem:K=C+finite}
  In \cite[Thm. 3.4]{Buczynska} Buczy{\'n}ska proved a statement which is the analogue of Theorem \ref{thm:excisione} in
  the particular case $\K=\C$ and for the fundamental group $\pi_1(X^{\text{an}},x)$, under the further hypothesis
  that $C:=X\setminus X_0$ is also smooth: in fact her proof is essentially based on differential-topological techniques. In the
  Appendix of \cite{Buczynska} she sketched a road map to dropping such a smoothness condition on $C$.

  Notice that, if $\K=\C$ then Theorem~\ref{thm:excisione} does not imply in general \cite[Thm. 3.4]{Buczynska},
  unless the fundamental group $\pi_1(X^{\text{an}}_0,x)$ is finite: in this case the Buczy{\'n}ska's
  result is obtained without any smoothness assumption on the complement $C$. In fact, in this case $$\pi_1(X^{\text{an}}_0,x)\cong
  \widehat{\pi}_1(X^{\text{an}}_0,x)\cong\pet(X_0,x)\stackrel{\text{Thm.~\ref{thm:excisione}}}\cong\pet(X,x)$$
  On the other hand, since $X$ is smooth (hence normal) the inclusion
  $X_0\hookrightarrow X$ induces a surjection
  $$\pi_1(X^{\text{an}}_0,x)\twoheadrightarrow \pi_1(X^{\text{an}},x)$$
  (see e.g. \cite[Thm.\,12.1.5]{CLS}) so that also $\pi_1(X^{\text{an}},x)$ is finite and
  \begin{equation*}
    \pi_1(X^{\text{an}},x)\cong
  \widehat{\pi}_1(X^{\text{an}},x)\cong\pet(X,x)
  \end{equation*}

A few words about those \emph{finiteness hypotheses} on the analytical fundamental groups. It is a well known fact that the
fundamental group of a non-degenerate toric variety is finite (see \cite[Thm.~12.1.10]{CLS} and considerations opening
\S~\ref{ssez:pi1TV}). In the very recent \cite{Braun}, L.~Braun proves that $\pi_1(X_{\rm{reg}},x)$ is finite, for
the regular locus of a weak Fano variety $X$. If, in addition, $X$ is assumed $\Q$-factorial,
\cite[Cor.~1.3.2]{BCHMcK} and \cite[Thm.~1.1]{GOST} prove that $X$ is a MDS, providing a large class of MDS admitting
finite fundamental group and showing that such hypotheses could be not so restrictive for varieties of interest in
the present paper.
\end{remark}

\subsection{The universal \ét covering}\label{ssez:esistenza}

By analogy with the complex case an algebraic variety
is called \emph{simply connected} if $\pet(X,x)$ is trivial, for some (hence for every) point $x\in X$.

\begin{remark} For $\K=\C$, $\pi_1(X^{\text{an}},x)\cong\{1\}$ implies that $\pet(X,x)\cong\{1\}$, but the converse
does not hold in general, as a non-trivial group can admit a trivial pro-finite completion: a standard example is
given by $\Q$, as $\Q$ does not admit any finite index subgroup. If needed, to avoid confusion in the complex case we
will say either $X$ is \emph{analytically} simply connected or $X^{\text{an}}$ is simply connected if
$\pi_1(X^{\text{an}},x)\cong\{1\}$. But, as observed in Remark~\ref{rem:K=C+finite}, under the further hypothesis that
$\pi_1(X^{\text{an}},x)$ is finite, the converse is also true and one can assert that
\begin{equation*}
  \pi_1(X^{\text{an}},x)\cong\{1\}\ \Longleftrightarrow\ \pet(X,x)\cong\{1\}
\end{equation*}
\end{remark}

\begin{definition}\label{def:univ.cov.}
 Given an algebraic variety $X$, a \emph{universal \ét covering} of $X$ is a simply connected algebraic variety $\widehat{X}$ which is a Galois \ét covering of $X$.
\end{definition}

\begin{notation} Let $\{X_i\stackrel{\phi_i}\to X\}_{i\in \mathfrak{I}}$ be the class of all the Galois coverings of
$X$. Then set
  \begin{equation*}
    \forall\,i,j\in\mathfrak{I}\quad\{X_j\stackrel{\phi_j}{\longrightarrow}X\}\leq
    \{X_i\stackrel{\phi_i}{\longrightarrow}X\}\ :\Longleftrightarrow\ \exists \phi_{ij}:X_i\longrightarrow X_j\ :\
    \phi_i=\phi_j\circ\phi_{ij}
  \end{equation*}
\end{notation}

\begin{proposition}[Universal property]\label{prop:universale}
Assume there exists the universal \ét covering $\widehat{\phi}:\widehat{X}\longrightarrow X$ of an algebraic variety $X$. Then, for every Galois \ét covering $\phi':X'\longrightarrow X$ there exists  a finite \ét morphism $\psi:\widehat{X}\longrightarrow X'$ such that $\widehat{\phi}=\phi'\circ\psi$ and $\psi$ is a Galois \ét covering of $X'$. In particular, the universal \ét covering of $X$ is unique.
\end{proposition}

\begin{proof}
Let $Y$ be a connected component of the fiber product $X'\times_X \widehat{X}$. Then there is an induced Galois \ét covering $\phi:Y\longrightarrow X$ \cite[Prop.~5.3.8-9, Prop.~5.4.6]{Szamuely} making commutative the following diagram
\begin{equation}\label{prod-fibrato}
  \xymatrix{&Y\ar[dd]^{\phi}\ar[dl]_{\text{pr}_1}\ar[dr]^{\text{pr}_2}&\\
    X'\ar[dr]_{\phi'}&&\widehat{X}\ar[dl]^{\widehat{\phi}}\\
    &X&
    }
\end{equation}
By the inverse limit
pro-representation of $\pet(\widehat{X},\widehat{x})$ one gets a natural surjection
\begin{equation*}
  \xymatrix{\{1\}\cong\pet(\widehat{X},\widehat{x})\ar@{>>}[r]&\Aut(\text{pr}_2)}\ \Longrightarrow\ \Aut(\text{pr}_2)\cong\{1\}
\end{equation*}
Then $\text{pr}_2$ is an isomorphism and $\psi:=\text{pr}_1\circ\text{pr}_2^{-1}$ gives a Galois \ét covering of $X'$ such that $\widehat{\phi}=\phi'\circ\psi$.
 \end{proof}

\subsection{The \'{e}tale fundamental group in codimension 1}\label{ssez:pi11}

Let $X$ be an algebraic variety and $x\in X$ a fixed point. Consider the collection of big
Zariski open neighborhoods of $x$ in $X$
\begin{equation*}
  \U1_x:=\{U\subseteq X\ |\ \text{$U$ is open, $x\in U$ and } \codim_X(X\setminus U)>1 \}
\end{equation*}
Consider the partial order relation $\preceq$ on $\U1$ given by setting: $U\preceq V\ :\Leftrightarrow\ U\supseteq
V$\,. Then $(\U1,\preceq)$ is a direct system because any two elements are dominated by their intersection.

\begin{proposition}
  Consider $U,V\in\U1_x$ such that $U\preceq V$. Then there exists a well defined homomorphism
  $\pet(V,x)\longrightarrow\pet(U,x)$.
\end{proposition}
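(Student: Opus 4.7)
The plan is to recognize this as an immediate application of the functoriality of the étale fundamental group established in Proposition~\ref{prop:f*}. First I would unpack the relation $U\preceq V$, which by definition means $V\subseteq U$; since both $V$ and $U$ are big open subsets of the irreducible variety $X$ containing $x$, they are themselves irreducible (and reduced) and pointed by $x$, and the set-theoretic inclusion provides a morphism of pointed irreducible algebraic varieties $i:(V,x)\hookrightarrow(U,x)$.

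Next I would apply Proposition~\ref{prop:f*} to this inclusion to obtain the desired induced homomorphism $i_*:\pet(V,x)\longrightarrow\pet(U,x)$. Concretely, the functor $i^{\#}$ sends a Galois étale covering $\phi:U'\to U$ to its pullback $V\times_U U'\cong\phi^{-1}(V)\to V$, which is again Galois étale; since $x\in V$, the fiber over $x$ of the pullback equals $\phi^{-1}(x)$, giving a canonical isomorphism of fiber functors $F^x\circ i^{\#}\cong F^x$, from which $i_*$ is assembled exactly as in the proof of Proposition~\ref{prop:f*}.

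Well-definedness follows from the canonical nature of this fiber identification: no choice of lifts of $x$ in the various Galois coverings need be made, since the identification is tautological at the level of the underlying sets $\phi^{-1}(x)$. There is, in this sense, no real obstacle to overcome; the proposition is essentially a specialization of Proposition~\ref{prop:f*} to the inclusion of a big open subset, and the content lies in observing that the direct system $(\U1_x,\preceq)$ is in fact a diagram of pointed inclusions of irreducible varieties, to which the functor $\pet$ applies.
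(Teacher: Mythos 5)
Your proposal is correct and matches the paper's own argument, which simply applies Proposition~\ref{prop:f*} to the open embedding $V\hookrightarrow U$. The additional remarks on the canonical identification of fibers are a faithful unpacking of that proposition's proof rather than a new ingredient.
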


\begin{proof}
  Apply Proposition~\ref{prop:f*} to the open embedding $V\hookrightarrow U$.
\end{proof}

\begin{definition}[The \'{e}tale fundamental group in codimension 1]\label{def:etalep11}
  Let $X$ be an algebraic variety and $x\in X$ a base point. The following inverse limit
  \begin{equation*}
     \pet(X,x)^{(1)}:=\varprojlim_{U\in\U1_x}\pet(U,x)
  \end{equation*}
  is called the \emph{\'{e}tale fundamental group in codimension 1 of $X$ centered at $x$}.
\end{definition}

\begin{remark}
  For $\K=\C$, by the Riemann Existence Theorem of Grothendieck, the \ét fundamental group defined in
  Definition~\ref{def:etalep11} is the pro-finite completion of the \emph{fundamental group in codimension 1}
  $\pi_1^1(X^{\text{an}},x)$ defined in \cite[Def.~3.1]{Buczynska}, that is
  \begin{equation*}
    \pet(X,x)^{(1)}:=\varprojlim_{U\in\U1_x}\widehat{\pi}_1(U^{\text{an}},x)=\widehat{\pi}^1_1(X^{\text{an}},x)
  \end{equation*}
  Therefore if $\pi^1_1(X^{\text{an}},x)$ is finite then $\pet(X,x)^{(1)}\cong\pi^1_1(X^{\text{an}},x)$\,.
\end{remark}
It makes then sense to set the following definition even when $\K$ is an arbitrary algebraically closed field  with
$\Char\K=0$\,:

\begin{definition}[$x$-1-connectedness]
   Let $X$ be an algebraic variety and $x\in X$ be a fixed base point. Then $X$ is called
   \emph{locally simply connected in codimension 1 near to $x$} (or \emph{$x$-1-connected} for ease) if $\pet(X,x)^{(1)}$ is
   trivial.
\end{definition}

\subsection{The direct system of local Galois 1-coverings}\label{ssez:Galois 1-cov}

Consider the collection
$$\{\phi_i:X_i\longrightarrow X\}_{i\in\mathfrak{I}_x^{(1)}}$$
of all Galois 1-coverings of $X$ such that $x\in X\setminus\Br\phi_i$, for every $i\in\mathfrak{I}_x^{(1)}$. Call such
a 1-covering a \emph{local Galois 1-covering of $X$ centered at $x$}.

\begin{proposition}\label{prop:pi11}
  Let $X$ be an algebraic variety and $x\in X$ a base point. Then the set of all local Galois 1-coverings
  of $X$ centered at $x$ is a direct system and
  \begin{equation*}
    \pet(X,x)^{(1)}=\varprojlim_{i\in\mathfrak{I}_x^{(1)}}\Aut^{(1)}(\phi_i)
  \end{equation*}
  where $\Aut^{(1)}(\phi_i)$ is defined in Definition~\ref{def:1-covering}.
\end{proposition}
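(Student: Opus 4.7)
The plan is to prove Proposition~\ref{prop:pi11} in two parts: first, to show that $\mathfrak{I}_x^{(1)}$ is a directed system under the natural partial order given by dominating morphisms; second, to identify $\varprojlim\Aut^{(1)}(\phi_i)$ with $\pet(X,x)^{(1)}$ via a cofinal correspondence with the system of Galois \ét coverings of the big open neighborhoods $U\in\U1_x$.

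For directedness, I would take $\phi_i,\phi_j\in\mathfrak{I}_x^{(1)}$ with branch loci $C_i,C_j$; their union $C:=C_i\cup C_j$ is still a small closed subset of $X$ missing $x$. The restrictions of $\phi_i$ and $\phi_j$ over $X\setminus C$ are Galois \ét coverings, so their fibred product $X_i\times_X X_j$ -- replaced if necessary by a connected component together with its Galois closure, in order to preserve the Galois property -- yields a Galois \ét covering of $X\setminus C$ dominating both, in the spirit of diagram (\ref{prodottofibrato}). Lemma~\ref{lem:estensione} then extends it to a 1-covering $\phi_{ij}:Y_{ij}\to X$ with $\Br\phi_{ij}\subseteq C$; in particular $\phi_{ij}\in\mathfrak{I}_x^{(1)}$ is a common upper bound of $\phi_i$ and $\phi_j$.

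For the identification of the two inverse limits, I would set up mutually inverse, order-preserving assignments between $\mathfrak{I}_x^{(1)}$ and the directed set of pairs $(U,\psi)$, where $U\in\U1_x$ and $\psi$ is a Galois \ét covering of $U$. The forward assignment sends $\phi_i$ to the pair $(U_i,\psi_i)$ with $U_i:=X\setminus\Br\phi_i$ and $\psi_i:=\phi_i|_{\phi_i^{-1}(U_i)}$, so that $\Aut(\psi_i)=\Aut^{(1)}(\phi_i)$ by construction. The backward assignment uses Lemma~\ref{lem:estensione} to extend any such $(U,\psi)$, with $U=X\setminus C$, to a Galois 1-covering $\overline{\psi}:\overline{V}\to X$ with $\Br\overline{\psi}\subseteq C$, hence lying in $\mathfrak{I}_x^{(1)}$ with $\Aut^{(1)}(\overline{\psi})=\Aut(\psi)$. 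These two assignments are inverse up to isomorphism and respect the partial orders, so the two directed systems are cofinal.

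Writing Definition~\ref{def:etalep11} as a double inverse limit
\[
\pet(X,x)^{(1)}=\varprojlim_{U\in\U1_x}\varprojlim_{\psi}\Aut(\psi),
\]
the cofinality just established collapses this iterated limit into the single limit $\varprojlim_{\phi_i\in\mathfrak{I}_x^{(1)}}\Aut^{(1)}(\phi_i)$, yielding the formula. The main technical obstacle I expect is the directedness step: the fibred product of two connected Galois \ét coverings can be disconnected, so one must carefully pass to a connected component and then to its Galois closure to genuinely produce a Galois local 1-covering dominating both. The subsequent cofinality argument and limit-collapse then reduce to routine bookkeeping on directed sets.
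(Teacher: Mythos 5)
Your proposal is correct and follows essentially the same route as the paper: the order relation via covering morphisms, directedness via fibred products (with $\Br(\phi_i\times_X\phi_j)=\Br\phi_i\cup\Br\phi_j$), and the identification of the two inverse limits through the restriction/extension correspondence furnished by Lemma~\ref{lem:estensione}. You are in fact somewhat more careful than the paper on two points it leaves implicit — the possible disconnectedness of the fibred product (a connected component is already Galois, so the Galois closure is not actually needed) and the explicit cofinality argument collapsing the double limit of Definition~\ref{def:etalep11} — but these are refinements of, not departures from, the paper's argument.
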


\begin{proof}
  As for the direct system of \ét coverings, set
  \begin{equation*}
    \forall\,i,j\in\mathfrak{I}_x^{(1)}\quad\{X_j\stackrel{\phi_j}{\longrightarrow}X\}\leq
    \{X_i\stackrel{\phi_i}{\longrightarrow}X\}\ :\Longleftrightarrow\ \exists \phi_{ij}:X_i\longrightarrow X_j\ :\
    \phi_i=\phi_j\circ\phi_{ij}
  \end{equation*}
  defining an order relation on the considered set of local Galois 1-coverings. Moreover, it turns out to be a direct
  system since, recalling diagram (\ref{prod-fibrato}) and considerations there given, there exists a connected component $Y$ of the fibred product $X_j\times_X X_i$ which
  is still a local Galois 1-covering of $X$ centered at $x$.
  The 1-covering morphism $\phi_{ij}:X_i\longrightarrow X_j$ clearly induces a surjection on fibres
  $$\phi_i^{-1}(x)=F^x(\phi_i)\twoheadrightarrow F^x(\phi_j)=\phi_j^{-1}(x)$$
  and then a morphism on the associated
  automorphism groups
  \begin{equation*}
    \xymatrix{\Aut^{(1)}(\phi_i)\ar[r]& \Aut^{(1)}(\phi_j)}
  \end{equation*}
  where $U_i:={X\setminus\Br\phi_i}$ and $U_j:={X\setminus\Br\phi_j}$. Then their inverse limits are well defined and
  the statement follows immediately by Definition~\ref{def:etalep11}.
\end{proof}

\begin{definition}[Universal local 1-covering]\label{def:local 1-covering}
  Let $X$ be an algebraic variety and $x\in X$ be a fixed base point. A local Galois
  1-covering $\widehat{\phi}:\widehat{X}\longrightarrow X$, of $X$ centered at $x$, is called \emph{universal} if it is a $\widehat{x}$-1-connected algebraic variety, for some (hence every) $\widehat{x}\in\widehat{\phi}^{-1}(x)$.
\end{definition}

\begin{proposition}[Universal property]
Let $X$ be an algebraic variety and assume there exists the universal local 1-covering centered at $x\in X$, $\widehat{\phi}:\widehat{X}\longrightarrow X$. Then, for every local Galois 1-covering $\phi':X'\longrightarrow X$ there exists  a finite \ét morphism $\psi:\widehat{X}\longrightarrow X'$ such that $\widehat{\phi}=\phi'\circ\psi$ and $\psi$ is a local Galois 1-covering of $X'$. In particular, the universal local 1-covering of $X$ is unique.
\end{proposition}

\begin{proof}
Reasoning as in the proof of Proposition~\ref{prop:universale}, the choice of a connected component $Y$, of the fiber product $X'\times_X \widehat{X}$, gives a local Galois 1-covering centered at $x$, making commutative a diagram like (\ref{prod-fibrato}). Then, the inverse limit
pro-representation of $\pet(\widehat{X},\widehat{x})^{(1)}$ given by Proposition~\ref{prop:pi11} ends up the proof.
\end{proof}

\begin{proposition}\label{prop:1-connesso&simplyconn}
   Let $X$ be an algebraic variety and assume there exists the universal local 1-covering centered at $x\in X$, $\widehat{\phi}:\widehat{X}\longrightarrow X$. Then $\widehat{X}$ is simply connected.
\end{proposition}

\begin{proof}
  Let $\phi':X'\longrightarrow \widehat{X}$ be any Galois \ét covering of $\widehat{X}$. Then
  $$\xymatrix{\phi:=\phi'\circ\widehat{\phi}:X'\ar[r]& X}$$
  is a Galois 1-covering of $X$ such that $\Br\phi=\Br\widehat{\phi}=:C$.
  $\widehat{\phi}$ is universal, meaning that there exists a
  Galois 1-covering $\psi:\widehat{X}\longrightarrow X'$ such that $\phi\circ\psi=\widehat{\phi}$, that is the following diagram
  commutes
  \begin{equation*}
    \xymatrix{X'\ar[dr]^-\phi\ar[d]^-{\phi'}&\widehat{X}\ar[l]_-\psi\ar[d]^-{\widehat{\phi}}\\
                \widehat{X}\ar[r]^-{\widehat{\phi}}&X}
  \end{equation*}
  Then $\phi'\circ\psi\in\Aut(\widehat{\phi}|_{\widehat{X}_C})$, where $\widehat{X}_C:=\widehat{\phi}^{-1}(X\setminus C)$. Then $\phi'$ restricts to
  give an isomorphism on the big open subset $X'_C:=\phi^{-1}(X\setminus C)\subseteq X'$, meaning that $\phi'$
  gives actually an isomorphism $X'\cong \widehat{X}$, as $\phi'$ is \ét. Then $\Aut(\phi')\cong\{1\}$. Passing to the
  inverse limit on the direct system of Galois \ét coverings of $\widehat{X}$, one gets $\pet(\widehat{X},\widehat{x})\cong\{1\}$, for every
  $\widehat{x}\in \widehat{X}$.
\end{proof}

We are now in a position to give some further analogous results to those given in \cite[\S~3]{Buczynska}.

\begin{corollary}[Compare with Cor.~3.9 in \cite{Buczynska}]\label{cor:pi1smooth}
  If $X$ is a smooth algebraic variety then $\pet(X,x)^{(1)}\cong \pet(X,x)$, for every $x\in X$.
\end{corollary}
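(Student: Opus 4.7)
The plan is to deduce the corollary directly from the excision isomorphism of Theorem~\ref{thm:excisione} together with the functoriality established in Proposition~\ref{prop:f*}. Since by hypothesis $X$ is smooth, every $U\in\U1_x$ is a big open subset of a smooth variety, so its complement $C_U:=X\setminus U$ is small and Theorem~\ref{thm:excisione} yields an isomorphism
\[
\iota_{U*}:\pet(U,x)\stackrel{\cong}{\longrightarrow}\pet(X,x)
\]
induced by the open embedding $\iota_U:U\hookrightarrow X$. The key point will therefore be that these isomorphisms are compatible with the transition maps defining the inverse limit, so that the whole inverse system $\{\pet(U,x)\}_{U\in\U1_x}$ is (canonically) constant, and its limit is $\pet(X,x)$.

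To check compatibility, take $U,V\in\U1_x$ with $U\preceq V$, i.e.\ $V\subseteq U$. The transition map in Definition~\ref{def:etalep11} is, by construction (see the proposition preceding that definition), the homomorphism $\iota_{UV*}:\pet(V,x)\to\pet(U,x)$ associated by Proposition~\ref{prop:f*} to the open inclusion $\iota_{UV}:V\hookrightarrow U$. Because Proposition~\ref{prop:f*} was produced through the functor $f^{\#}$ on \'etale coverings (pull--back along $f$) and induces a homomorphism of fibre functors, it is functorial in the morphism, so $\iota_{V*}=\iota_{U*}\circ\iota_{UV*}$. In the triangle
\[
\xymatrix{\pet(V,x)\ar[rr]^-{\iota_{UV*}}\ar[dr]_-{\iota_{V*}}^-{\cong} &&\pet(U,x)\ar[dl]^-{\iota_{U*}}_-{\cong}\\
&\pet(X,x) &}
\]
the two diagonal arrows are isomorphisms by Theorem~\ref{thm:excisione}, hence $\iota_{UV*}$ is itself an isomorphism and, more importantly, it is conjugate (through $\iota_{U*},\iota_{V*}$) to the identity of $\pet(X,x)$.

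Consequently the inverse system $\bigl(\pet(U,x),\iota_{UV*}\bigr)_{U\in\U1_x}$ is isomorphic, as a pro-system, to the constant system with value $\pet(X,x)$ and identity transition maps. Passing to the inverse limit and using $\pet(X,x)\in\U1_x$ (indeed $X$ itself belongs to $\U1_x$, yielding a compatible section of the limit), we obtain
\[
\pet(X,x)^{(1)}=\varprojlim_{U\in\U1_x}\pet(U,x)\ \cong\ \pet(X,x),
\]
which is the claim. The only real content is Theorem~\ref{thm:excisione}; the main (mild) obstacle is the bookkeeping needed to verify the naturality of the excision isomorphism with respect to the restriction maps, and this is dispatched by the functoriality of the pull--back construction behind Proposition~\ref{prop:f*}.
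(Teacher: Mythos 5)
Your proof is correct and follows essentially the same route as the paper, which simply applies Theorem~\ref{thm:excisione} to each term $\pet(X\setminus C,x)$ of the inverse system and identifies the limit of the resulting constant system with $\pet(X,x)$. Your additional verification that the excision isomorphisms commute with the transition maps (via the functoriality behind Proposition~\ref{prop:f*}) is a detail the paper leaves implicit, but it does not change the argument.
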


\begin{proof}
  By definition
  $$\pet(X,x)^{(1)}=\varprojlim_{X_0\ \stackrel{\text{big}}{\subset}\ X} \pet(X_0,x)\
  \stackrel{\text{Thm.\ref{thm:excisione}}}{\cong}\ \varprojlim_{X_0\ \stackrel{\text{big}}{\subset}\ X} \pet(X,x)\
  =\ \pet(X,x)$$
\end{proof}

\begin{theorem}[Compare with Cor.~3.10 in \cite{Buczynska}]\label{thm:pi1normale}
  Let $X$ be a normal algebraic variety and $X_{\text{\rm reg}}\subseteq X$ be the Zariski open subset of
  regular points of $X$. Then
  $$\forall\,x\in X_{\text{\rm reg}}\quad \pet(X_{\text{\rm reg}},x)\cong\pet(X,x)^{(1)}$$
\end{theorem}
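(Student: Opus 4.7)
The plan is to exhibit a cofinal subsystem of $\U1_x$ on which the inverse limit defining $\pet(X,x)^{(1)}$ stabilizes at $\pet(X_{\text{reg}},x)$, by means of the excision result Theorem~\ref{thm:excisione} applied to the smooth variety $X_{\text{reg}}$.

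First I would use the normality hypothesis: since $X$ is normal, the singular locus $\Sing X = X\setminus X_{\text{reg}}$ is a small closed subset, i.e.\ $\codim_X \Sing X \geq 2$. In particular, for any regular base point $x\in X_{\text{reg}}$, the open subset $X_{\text{reg}}$ itself belongs to $\U1_x$. More importantly, for any $U\in\U1_x$ the intersection $U\cap X_{\text{reg}}$ still lies in $\U1_x$ (its complement in $X$ is contained in the union of the two small closed subsets $X\setminus U$ and $\Sing X$), and satisfies $U\preceq U\cap X_{\text{reg}}\succeq X_{\text{reg}}$. Hence the subfamily
\[
\U1_{x,\text{reg}}:=\{V\in\U1_x\mid V\subseteq X_{\text{reg}}\}
\]
is cofinal in $(\U1_x,\preceq)$, and therefore
\[
\pet(X,x)^{(1)}=\varprojlim_{U\in\U1_x}\pet(U,x)\ \cong\ \varprojlim_{V\in\U1_{x,\text{reg}}}\pet(V,x).
\]

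Second, I would observe that every $V\in\U1_{x,\text{reg}}$ is a big open subset of the \emph{smooth} irreducible variety $X_{\text{reg}}$: indeed $X_{\text{reg}}\setminus V\subseteq X\setminus V$ is small in $X$, hence small in $X_{\text{reg}}$. Writing $V=X_{\text{reg}}\setminus C'$ for a small closed $C'\subset X_{\text{reg}}$ and applying Theorem~\ref{thm:excisione} (whose smoothness hypothesis is exactly what normality grants us after restriction to $X_{\text{reg}}$), one obtains canonical isomorphisms
\[
\pet(V,x)\ \cong\ \pet(X_{\text{reg}},x)
\]
induced by the open inclusion $V\hookrightarrow X_{\text{reg}}$.

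Third, I would check compatibility with the transition maps of the inverse system. Given $V\preceq V'$ in $\U1_{x,\text{reg}}$, the transition homomorphism $\pet(V',x)\to\pet(V,x)$ of Proposition~\ref{prop:f*} fits into the commutative triangle with apex $\pet(X_{\text{reg}},x)$ coming from the two inclusions $V\hookrightarrow X_{\text{reg}}$ and $V'\hookrightarrow X_{\text{reg}}$ (which factor as $V\hookrightarrow V'\hookrightarrow X_{\text{reg}}$); by the previous step the two slanted maps are isomorphisms, forcing the horizontal one to be an isomorphism as well. Therefore the cofinal inverse system is made of isomorphisms, its limit coincides with any of its terms, and we conclude $\pet(X,x)^{(1)}\cong \pet(X_{\text{reg}},x)$.

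The only delicate point, which I would flag as the main subtlety rather than a true obstacle, is verifying that the identifications $\pet(V,x)\cong\pet(X_{\text{reg}},x)$ are genuinely functorial in $V$ (so that the limit argument of the last paragraph is legitimate); this however follows from the fact that the isomorphism in Theorem~\ref{thm:excisione} is induced by the open embedding $X\setminus C\hookrightarrow X$ via the functor of Proposition~\ref{prop:f*}, so naturality is built into the construction.
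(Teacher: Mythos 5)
Your argument is correct, but it follows a genuinely different route from the one in the paper. The paper proves the theorem at the level of pro-representing direct systems: it matches the system of local Galois 1-coverings of $X$ centered at $x$ (which computes $\pet(X,x)^{(1)}$ by Proposition~\ref{prop:pi11}) with the system of Galois \ét coverings of $X_{\text{reg}}$, using restriction in one direction and the extension Lemma~\ref{lem:estensione} (plus purity on the smooth locus) in the other, checks the two operations are mutually inverse and identify $\Aut^{(1)}(\phi_i)$ with $\Aut(\psi_j)$, and then passes to inverse limits --- essentially re-running the proof of Theorem~\ref{thm:excisione} in the normal setting. You instead stay with the open-neighborhood Definition~\ref{def:etalep11} of $\pet(X,x)^{(1)}$, observe that normality makes $\Sing X$ small so that the big opens contained in $X_{\text{reg}}$ form a cofinal subsystem of $(\U1_x,\preceq)$, and then invoke Theorem~\ref{thm:excisione} as a black box on the smooth variety $X_{\text{reg}}$ to collapse the limit. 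Both proofs rest on the same substantive input (Lemma~\ref{lem:estensione}/Zariski--Nagata purity, packaged in the excision theorem); yours buys a shorter, purely formal reduction that avoids repeating the covering-correspondence argument, while the paper's version has the side benefit of making explicit the dictionary between local Galois 1-coverings of $X$ and \ét coverings of $X_{\text{reg}}$, which it reuses later (e.g.\ in Proposition~\ref{prop:univ-1-cov}). Your closing remark on functoriality is the right point to flag and is indeed covered by the fact that the isomorphism of Theorem~\ref{thm:excisione} is induced by the open inclusion via Proposition~\ref{prop:f*}.
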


\begin{proof}
First of all let us define a homomorphism
 \begin{equation*}
   \iota: \pet(X_{\text{\rm reg}},x)\longrightarrow \pet(X,x)^{(1)}
 \end{equation*}
 induced by the inclusion $i:X_{\text{\rm reg}}\hookrightarrow X$. By Lemma~\ref{lem:estensione}, a Galois covering $$\phi:U\longrightarrow X_{\rm{reg}}$$
 can be extended to a 1-covering $\overline{\phi}:\overline{U}\longrightarrow X$, where $\overline{U}$ is a normal algebraic variety. By definition of inverse limit, there exists a homomorphism  $\pi:\pet(X_{\text{\rm reg}},x)\longrightarrow \Aut(\phi)$. Given any class $\vf\in \pet(X_{\text{\rm reg}},x)$ consider its representative $f=\pi(\vf)\in\Aut(\phi)$. Choose a point in the fiber $x'\in F^x(\phi):=\phi^{-1}(x)$ and set $x''=f(x')\in F^x(\phi)$. Since $\overline{\phi}$ is an extension of $\phi$, $F^x(\phi)=F^x(\overline{\phi})$ and we can consider an automorphism $\overline{f}\in\Aut^{(1)}(\overline{\phi})$ such that $\overline{f}(x')=x''\in F^x(\overline{\phi})$. Recalling the inverse limit description of $\pet(X,x)^{(1)}$ given by Proposition~\ref{prop:pi11}, define
 \begin{equation}\label{iota}
   \iota[f]:=[\overline{f}]_1\in\pet(X,x)^{(1)}
 \end{equation}
 where $[\overline{f}]_1$ denotes the class determined by $\overline{f}$ in $\varprojlim_{i\in\mathfrak{I}_x^{(1)}}\Aut^{(1)}(\phi_i)$.

 \emph{$\iota[f]$ is well defined}. There are some choices in the definition of $\iota[f]$.
 \begin{itemize}
   \item If one chooses a different automorphism $\overline{f}'\in \Aut^{(1)}(\overline{\phi})$ such that $\overline{f}'(x')=x''\in F^x(\overline{\phi})$ then $\overline{f}'(x')=\overline{f}(x')$ and  Proposition~\ref{prop:lifting} ensures that $\overline{f}|_U=f=\overline{f}'|_U$, as $U$ is connected. That is enough to guarantee that $\overline{f}=\overline{f}'$.
   \item Choose a different Galois covering $\phi':U'\longrightarrow X_{\rm{reg}}$ extending to a 1-covering $\overline{\phi}':\overline{U}'\longrightarrow X$ and let $\pi':\pet(X_{\text{\rm reg}},x)\longrightarrow \Aut(\phi')$ be the associated homomorphism in the inverse limit construction. Set $f':=\pi'(\vf)$ and let $\overline{f}'\in \Aut^{(1)}(\overline{\phi}')$ be an induced automorphism. Then
       $$[\overline{f}]_1=[\overline{f}']_1\in \pet(X,x)^{(1)}$$
       as the 1-coverings $\overline{\phi}$ and $\overline{\phi}'$ are both dominated by a 1-covering $\widetilde{\phi}:Y\longrightarrow X$, being $Y$ a suitable connected component in the fibred product $\overline{U}\times_X \overline{U}'$.
 \end{itemize}

 \emph{$\iota$ is an homomorphism}. Choose $\vf,\psi\in\pet(X_{\text{\rm reg}},x)$ and set
 $$f=\pi(\vf)\,,\ g=\pi(\psi)\in \Aut(\phi)$$
 Then $\pi(\vf\cdot\psi)=f\circ g\in \Aut(\phi)$ so that $\overline{f\circ g}=\overline{f}\circ\overline{g}\in \Aut^{(1)}(\overline{\phi})$ and
 \begin{equation*}
   \iota(\vf\cdot\psi)=[\overline{f}\circ\overline{g}]_1=[\overline{f}]_1\cdot[\overline{g}]_1\in \pet(X,x)^{(1)}
 \end{equation*}

 \emph{$\iota$ is injective}. In fact, if $\vf,\vf'\in  \pet(X_{\text{\rm reg}},x)$ are such that $\vf\neq\vf'$ then there exists a Galois covering $\phi_i:U_i\longrightarrow X_{\rm{reg}}$ such that $f:=\pi_i(\vf)\neq\pi_i(\vf')=:f'$ are different automorphism of $\phi_i$, being $\pi_i$ the canonical projection
 \begin{equation*}
   \pi_i:\pet(X_{\text{\rm reg}},x)\longrightarrow\Aut(\phi_i)
 \end{equation*}
 Let $\phi_j:U_j\longrightarrow X_{\rm{reg}}$ be a Galois covering. Then there exists a connected component $Y\subseteq U_i\times_{X_{\rm{reg}}} U_j$ and an induced commutative diagram
 \begin{equation*}
   \xymatrix{Y\ar[rd]^-\psi\ar[r]^-{\widehat{\phi}_i}\ar[d]_-{\widehat{\phi}_j}&U_j\ar[d]^-{\phi_j}\\
                U_i\ar[r]_-{\phi_i}&X_{\rm{reg}}}
 \end{equation*}
 where $\widehat{\phi}_i, \widehat{\phi}_j, \psi$ are Galois \ét covering. Then, the inverse limit construction gives a canonical projection $\pi_Y: \pet(X_{\text{\rm reg}},x)\longrightarrow\Aut(\psi)$ and a morphism $\widehat{\phi}_{j*}:\Aut(\psi)\longrightarrow\Aut(\phi_i)$ giving the commutativa diagram
 \begin{equation*}
   \xymatrix{\pet(X_{\text{\rm reg}},x)\ar[r]^-{\pi_Y}\ar[rd]_-{\pi_i}&\Aut(\psi)\ar[d]^-{\widehat{\phi}_{j*}}\\
                &\Aut(\phi_i)}
 \end{equation*}
 so that
 \begin{equation}\label{disuguaglianza}
   \widehat{\phi}_{j*}\circ\pi_Y(\vf)=\pi_i(\vf)=f\neq f'=\pi_i(\vf')= \widehat{\phi}_{j*}\circ\pi_Y(\vf')\ \Longrightarrow\ \pi_Y(\vf)\neq\pi_Y(\vf')
 \end{equation}
Again Lemma~\ref{lem:estensione} guarantees that $\widehat{\phi}_i:Y\longrightarrow U_j$, being a Galois \ét covering of $U_j$, extends to a Galois \ét covering $\overline{\widehat{\phi}}_i:\overline{Y}\longrightarrow\overline{U}_j$ so that
$$\overline{\psi}:=\overline{\phi}_j\circ \overline{\widehat{\phi}}_i:\overline{Y}\longrightarrow X_{\rm{reg}}$$
is a Galois 1-covering of $X_{\rm{reg}}$. Then, by the definition (\ref{iota}) of $\iota$,
\begin{equation*}
  \iota(\vf)=[\overline{\pi_Y(\vf)}]_1\in\pet(X,x)^{(1)}\ ,\quad \iota(\vf')=[\overline{\pi_Y(\vf')}]_1\in\pet(X,x)^{(1)}
\end{equation*}
If $\iota(\vf)=\iota(\vf')$, by the inverse limit representation given by Proposition~\ref{prop:pi11}, up to replace $Y$ by a suitable connected étale covering of itself, then $\pi_Y(\vf)$ and $\pi_Y(\vf')$ would be obtained as restrictions of a same automorphism. Then, Proposition~\ref{prop:lifting} and connectedness of $Y$ would give $\pi_Y(\vf)=\pi_Y(\vf')$, against the conclusion in (\ref{disuguaglianza}).

\emph{$\iota$ is surjective}. Consider a class $\vf\in \pet(X,x)^{(1)}$ and let $\overline{\phi}:\overline{U}\longrightarrow X$ be a Galois 1-covering of $X$ extending a Galois \ét covering $\phi:U\longrightarrow X_{\rm{reg}}$, as in Lemma~\ref{lem:estensione}. Let $\pi_{\overline{U}}:\pet(X,x)^{(1)}\longrightarrow \Aut^{(1)}(\overline{\phi})$ be the canonical projection. Then $\pi_{\overline{U}}(\vf)\in \Aut^{(1)}(\overline{\phi})$ induces an automorphism $f\in \Aut(\phi)$ such that
$$\vf=[\pi_{\overline{U}}(\vf)]_1=\iota[f]$$
by the definition (\ref{iota}),
\end{proof}

\begin{remark}\label{rem:K=C+normal}
  For $\K=\C$, what observed in Remark~\ref{rem:K=C+finite}, \wrt the excision property given by
  Theorem~\ref{thm:excisione}, applies also to Corollary~\ref{cor:pi1smooth} and
  Theorem~\ref{thm:pi1normale}: in general they do not imply the analogous Buczy{\'n}ska's results, unless when
  $\pi_1(X,x)^{(1)}$, $\pi_1(X,x)$ and $\pi_1(X_{\rm{reg}},x)$ are assumed to be finite groups.
\end{remark}

Theorem~\ref{thm:pi1normale} allows us to drop local conditions for 1-coverings of a normal variety
$X$, when base points are chosen in the big open $X_{\rm{reg}}$ of regular points. Namely we get the following
consequences.

\begin{corollary}\label{cor:nobasepoint}
  Let $X$ be a normal algebraic variety with $\pet(X_{\text{\rm reg}},x)$ being a finite group for any regular point $x\in X_{\rm{reg}}$. Then
  $$\pet(X,x)^{(1)}\cong\pet(X,x')^{(1)}$$
  for every $x,x'\in X_{\rm{reg}}$.
\end{corollary}

\begin{proof}
  By Theorem~\ref{thm:pi1normale} and Proposition~\ref{prop:puntobase}, one has
  \begin{equation*}
    \pet(X,x)^{(1)}\cong\pet(X_{\rm{reg}},x)\cong\pet(X_{\rm{reg}},x')\cong\pet(X,x')^{(1)}
  \end{equation*}
\end{proof}

\begin{corollary}\label{cor:univ-1-cov}
  Let $\phi':X'\longrightarrow X$ be a Galois 1-covering of a normal algebraic variety $X$. Then $\phi'$
  is the universal 1-covering of $X$ if and only the open subset \emph{$X'_{\rm{reg}}\subseteq X'$} of regular
  points is simply connected, that is $\pet(X'_{\rm{reg}},x')\cong\{1\}$ for some (hence every) $x'\in
  X'_{\rm{reg}}$.

  In other words, $\phi':X'\longrightarrow X$ is the universal 1-covering if and only if it is the universal local
  Galois 1-covering of $X$ centered at any regular point of $X$.
\end{corollary}
\begin{proof} The statement immediately follows by Definition~\ref{def:local 1-covering} and Theorem~\ref{thm:pi1normale}.
\end{proof}

\begin{remark}
  For $\K=\C$, the analogous property of Corollary~\ref{cor:nobasepoint}, on the fundamental groups of $X^{\text{an}}$
  with different base points, is not directly implied by the algebraic statement on their pro-finite completions.
  Anyway, it is a straightforward consequence of path connectedness of $X^{\text{an}}$.

  \noindent On the contrary, Corollary~\ref{cor:univ-1-cov} implies the analogous statement on topological
  1-coverings of $X^{\text{an}}$ under the further hypothesis that $\pi_1(X'^{\text{an}}_{\rm{reg}}, x')$ is finite,
  since $\pet(X'_{\rm{reg}},x')\cong\{1\}$ if and only if $\pi_1(X'^{\text{an}}_{\rm{reg}},x')\cong\{1\}$. Then
  Corollary~\ref{cor:univ-1-cov} gives a proof of what stated in \cite[Rem.~3.14]{Buczynska}, under the further
  hypothesis that $X$ is normal.

\end{remark}

\subsection{Pull back of divisors}\label{ssez:pullback}
Let $X$ be a normal algebraic variety of dimension $n$. The group of
Weil divisors on $X$ is denoted by $\Div(X)$\,: it is the free group generated by prime divisors of $X$. For
$D_1,D_2\in\Div(X)$, $D_1\sim D_2$ means that they are linearly equivalent. The subgroup of Weil divisors linearly
equivalent to 0 is denoted by $\Div_0(X)\leq\Div(X)$. The quotient group $\Cl(X):=\Div(X)/\Div_0(X)$ is called the
\emph{class group}, giving the following short exact sequence of $\Z$-modules
\begin{equation}\label{Wdivisori}
  \xymatrix{0\ar[r]&\Div_0(X)\ar[r]&\Div(X)\ar[r]^-{d_X}&\Cl(X)\ar[r]&0}
\end{equation}
Given a divisor $D\in\Div(X)$, its class $d_X(D)$ is often denoted by $[D]$, when no confusion may arise.

Consider a dominant morphism $\phi:Y\to X$ of normal algebraic varieties whose image $\phi(Y)$ contains a big open subset of $X$ and assume that, for any small closed subset $C\subset X$, the pre-image $\phi^{-1}(C)\subset Y$ is a small closed subset, as well. Then a pull back $\phi^\#$ is
well defined on Cartier divisors by pulling back local equations. This procedure sends principal divisors to
principal divisors, so defining a pull back homomorphism $\phi^*:\Pic(X)\to\Pic(Y)$, where $\Pic$ denotes the group of
linear equivalence classes of Cartier divisors. The given hypotheses on $\phi,Y$ and $X$ allow us to extend the
definition of $\phi^\#$ to every Weil divisor as follows:
\begin{equation}\label{pullback}
  \forall\,D\in\Div(X)\quad \phi^\#(D):=\overline{\phi^\#(D\cap X_{\rm{reg}})}\in\Div(Y)
\end{equation}
(see e.g. \cite[Remark~1.3.4.1]{ADHL}).
Notice that $D\cap X_{\rm{reg}}$ is a Cartier divisor on $X_{\rm{reg}}$; then $\phi^\#(D\cap X_{\rm{reg}})$ is a
Cartier divisor in $Y_{\rm{reg}}\cap \phi^{-1}(X_{\rm{reg}})$ which is a Zariski open subset of $Y$. Clearly
$\phi^\#:\Div(X)\to\Div(Y)$, as defined in (\ref{pullback}), sends Cartier divisors to Cartier divisors and principal
divisors to principal divisors, so giving a well defined pull back homomorphism $\phi^*:\Cl(X)\to\Cl(Y)$ such that
$\phi^*|_{\Pic(X)}$ is the pull back of Cartier divisors defined above.

In the case $\phi:Y\longrightarrow X$ is a 1-covering of normal algebraic va\-rie\-ties obtained as a geometric quotient of a finite abelian group $G$ acting on $Y$, that is $X\cong Y/G$, then hypotheses given above are satisfied and the pre-image
$\phi^{-1}(D)\subseteq Y$ of a Weil divisor $D\in\Div(X)$ is still a Weil divisor of $Y$, meaning that the pull back
defined by (\ref{pullback}) can be easily rewritten by setting
\begin{equation}\label{Weilpullback}
  \phi^\#(D)=\phi^{-1}(D)
  \end{equation}

\section{Application to toric varieties}\label{sez:toric}

The present section is meant to applying results stated in section~\ref{sez:1-covering} to the case of toric
varieties, so generalizing to every algebraically closed field $\K$, with $\Char\K=0$, results given in
\cite[\S~4]{Buczynska} and in \cite{RT-QUOT} under the assumption $\K=\C$.

\subsection{Preliminaries and notation on toric varieties}

Throughout the present paper we will adopt the following definition of a toric variety:
\begin{definition}[Toric variety]\label{def:TV}
  A \emph{toric variety} is a tern $(X,\T,x_0)$ such that:
\begin{itemize}
  \item[(i)] $X$ is a normal, $n$-dimensional, algebraic variety over an algebraically closed field
      $\K$ with $\Char \K=0$,
  \item[(ii)] $\T\cong(\K^*)^n$ is a $n$-torus freely acting on $X$,
  \item[(iii)] $x_0\in X$ is a special point called the \emph{base point}, such that the orbit map $t\in\T\mapsto
      t\cdot x_0\in\T\cdot x_0\subseteq X$ is an open embedding.
\end{itemize}
\end{definition}
For standard notation on toric varieties and their defining \emph{fans} we refer to the extensive treatment
\cite{CLS}.

\begin{definition}[Morphism of toric varieties]\label{def:TVmorphism} Let $Y$ and $X$ be toric varieties with acting
tori $\T_Y$ and $\T_X$ and base points $y_0$ and $x_0$, respectively. A morphism of algebraic varieties
$\phi:Y\longrightarrow X$ is called a \emph{morphism of toric varieties} if
  \begin{itemize}
    \item[(i)] $\phi(y_0)=x_0$\,,
    \item[(ii)] $\phi$ restricts to give a homomorphism of tori $\phi_\T:\T_Y\longrightarrow\T_X$ by setting
        $$\phi_\T(t)\cdot x_0=\phi(t\cdot y_0)$$
  \end{itemize}
\end{definition}

Conditions (i) and (ii) are equivalent to require that $\phi$ induces a morphism between underling fans,
as defined e.g in \cite[\S~3.3]{CLS}.

\subsubsection{List of notation}\label{sssez:lista}
\begin{eqnarray*}
% \nonumber to remove numbering (before each equation)
  &M,N,M_{\R},N_{\R}& \text{denote the \emph{group of characters} of $\T$, its dual group}\\
  && \text{and their tensor products with $\R$, respectively;} \\
  &\Si\subseteq \mathfrak{P}(N_{\R})& \text{is the fan defining $X$;}\\
  &&\text{$\mathfrak{P}(N_{\R})$ denotes the power set of $N_{\R}$} \\
  &\Si(i)& \text{is the \emph{$i$--skeleton of $\Si$};}\\
  &\langle\v_1,\ldots,\v_s\rangle\subseteq\N_{\R}& \text{cone generated by $\v_1,\ldots,\v_s\in N_{\R}$;}\\
  && \text{if $s=1$ this cone is called the \emph{ray} generated by $\v_1$;} \\
  &\mathcal{L}(\v_1,\ldots,\v_s)\subseteq N& \text{sublattice spanned by $\v_1,\ldots,\v_s\in N$\,;}\\
\end{eqnarray*}
Let $A\in\mathbf{M}(d,m;\Z)$ be a $d\times m$ integer matrix, then
\begin{eqnarray*}
% \nonumber to remove numbering (before each equation)
  &\mathcal{L}_r(A)\subseteq\Z^m& \text{is the sublattice spanned by the rows of $A$;} \\
  &\mathcal{L}_c(A)\subseteq\Z^d& \text{is the sublattice spanned by the columns of $A$;} \\
  &A_I\,,\,A^I& \text{$\forall\,I\subseteq\{1,\ldots,m\}$ the former is the submatrix of $A$ given by}\\
  && \text{the columns indexed by $I$ and the latter is the submatrix}\\
  && \text{of $A$ whose columns are indexed by the complementary }\\
  && \text{subset $\{1,\ldots,m\}\setminus I$;} \\
  &\text{\emph{positive}}& \text{a matrix (vector) whose entries are non-negative.}
\end{eqnarray*}
Given a matrix $V=(\v_1\cdots\v_{m})\in\mathbf{M}(n,m;\Z)$ , then
\begin{eqnarray*}
% \nonumber to remove numbering (before each equation)
  &\langle V\rangle=\langle\v_1,\ldots,\v_{m}\rangle\subseteq N_{\R}& \text{is the cone generated by the columns of
  $V$;} \\
  &\SF(V)=\SF(\v_1,\ldots,\v_{m})& \text{is the set of all rational simplicial fans $\Si$ such that}\\
  && \text{$\Sigma(1)=\{\langle\v_1\rangle,\ldots,\langle\v_{m}\rangle\}\subseteq N_{\R}$ and} \\ &&
  \text{$|\Si|=\langle V\rangle$ \cite[Def.~1.3]{RT-LA&GD}.}\\
  & \I_\Si& :=\{I\subseteq\{1,\dots,m\}\,|\,\langle V_I\rangle\in\Si\}\\
  &\G(V)& \text{is a \emph{Gale dual} matrix of $V$ \cite[\S~3.1]{RT-LA&GD}} \\
\end{eqnarray*}
Given a fan $\Si$ in $N_\R\cong\R^n$, the integer matrix $V=(\v_1\cdots\v_m)\in \mathbf{M}(n,m;\Z)$, whose columns are
primitive generators of the 1-skeleton $\Si(1)=\{\langle\v_1\rangle,\ldots,\langle\v_m\rangle\}$, is called a
\emph{fan matrix} of the toric variety $X(\Si)$. The Gale dual $Q=\G(V)$ of a fan matrix is called a \emph{weight
matrix} of $X(\Si)$.

\subsection{$F,CF,W$-matrices and poly weighted spaces (PWS)}

\begin{definition}[$F,CF$-matrices, Def.~3.10 in \cite{RT-LA&GD}]\label{def:Fmatrice} An \emph{$F$--matrix} is a
$n\times m$ matrix  $V$ with integer entries, satisfying the conditions:
\begin{itemize}
\item[(a)] $\rk(V)=n$;
\item[(b)] $V$ is \emph{$F$--complete} i.e. $\langle V\rangle=N_{\R}\cong\R^n$ \cite[Def.~3.4]{RT-LA&GD};
\item[(c)] all the columns of $V$ are non zero;
\item[(d)] if ${\bf  v}$ is a column of $V$, then $V$ does not contain another column of the form $\lambda  {\bf
    v}$ where $\lambda>0$ is real number.
\end{itemize}
A \emph{$CF$--matrix} is a $F$-matrix satisfying the further requirement
\begin{itemize}
\item[(e)] the sublattice ${\mathcal L}_c(V)\subseteq\Z^n$ is cotorsion free, that is, ${\mathcal L}_c(V)=\Z^n$ or,
    equivalently, ${\mathcal L}_r(V)\subseteq\Z^{m}$ is cotorsion free.
\end{itemize}
A $F$--matrix $V$ is called \emph{reduced} if every column of $V$ is composed by coprime entries
\cite[Def.~3.13]{RT-LA&GD}.
\end{definition}
The most significant example of a reduced $F$-matrix is given by the fan matrix $V$ of a rational and complete fan
$\Sigma$.

\begin{definition}[$W$-matrix, Def.~3.9 in \cite{RT-LA&GD}]\label{def:Wmatrice} A \emph{$W$--matrix} is an $r\times m$
matrix $Q$  with integer entries, satisfying the following conditions:
\begin{itemize}
\item[(a)] $\rk(Q)=r$;
\item[(b)] ${\mathcal L}_r(Q)$ does not have cotorsion in $\Z^{m}$;
\item[(c)] $Q$ is \emph{$W$--positive}, that is, $\mathcal{L}_r(Q)$ admits a basis consisting of positive vectors
    \cite[Def.~3.4]{RT-LA&GD}.
\item[(d)] Every column of $Q$ is non-zero.
\item[(e)] ${\mathcal L}_r(Q)$   does not contain vectors of the form $(0,\ldots,0,1,0,\ldots,0)$.
\item[(f)]  ${\mathcal L}_r(Q)$ does not contain vectors of the form $(0,a,0,\ldots,0,b,0,\ldots,0)$, with $ab<0$.
\end{itemize}
A $W$--matrix is called \emph{reduced} if $V=\G(Q)$ is a reduced $F$--matrix \cite[Def.~3.14, Thm.~3.15]{RT-LA&GD}
\end{definition}
The most significant example of a reduced $W$-matrix $Q$ is given by the weight matrix of a rational and complete fan
$\Si$.

\begin{definition}[Poly weighted space, Def.~2.7 in \cite{RT-LA&GD}]\label{def:PWS} A \emph{poly weighted space} (PWS)
is a $n$--dimensional $\Q$--factorial complete toric variety $X(\Si)$, whose reduced fan matrix $V$ is a $CF$--matrix
i.e. if
\begin{itemize}
  \item $V=(\v_1,\ldots,\v_{m})$ is a $n\times m$ $CF$--matrix,
  \item $\Si\in\SF(V)$.
\end{itemize}
\end{definition}

\subsection{1-coverings of toric varieties}

A priori, a 1-covering $\phi:Y\longrightarrow X$ of a toric variety $X$ need not be an equivariant morphism of toric
varieties and $Y$ may not even be a toric variety. A posteriori, we will see that, actually, this is not the case when
$X$ is a \emph{non-degenerate} toric variety, that is:

\begin{definition}[Non-degenerate toric variety]
  A toric variety $X(\Si)$ is called \emph{non-degenerate} if the support $|\Si|$ spans $N_\R$.
\end{definition}

\begin{remark}\label{rem:fattori torici}
  The following facts are equivalent (see e.g. \cite[Prop.~3.3.9]{CLS}):
  \begin{enumerate}
    \item the support $|\Si|$ spans $N_\R$,
    \item the 1-skeleton $\Si(1)$ spans $N_\R$,
    \item $H^0(X,\cO_X^*)\cong\K^*$,
    \item $X(\Si)$ has no torus factors.
  \end{enumerate}
\end{remark}

\begin{definition}[toric 1-covering]\label{def:toric-1-covering}
  A 1-covering $\phi:Y\longrightarrow X$ between toric varieties $Y$ and $X$ is called a \emph{toric 1-covering} if
  $\phi$ is a morphism of toric varieties in the sense of Definition \ref{def:TVmorphism}.
\end{definition}

\begin{proposition}[see e.g. Thm.~3.2.6 in \cite{CLS}]\label{prop:Tbordo}
  Let $X(\Si)$ be a toric variety and consider the torus embedding $\T\hookrightarrow\T \cdot x_0\subseteq X$. Let
  $x_{\rho}$ be the \emph{distinguished point} of a ray $\rho\in\Si(1)$ (see e.g. \cite[\S~3.2]{CLS}). Let $D_{\rho}$
  be the associated torus invariant divisor i.e. $D_{\rho}=\overline{\T\cdot x_{\rho}}\subseteq X$.
      Then $\bigcup_{\rho\in\Si(1)}D_\rho= X\setminus \T\cdot x_0$\,.
\end{proposition}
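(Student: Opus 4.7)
The plan is to invoke the Orbit--Cone Correspondence (see \cite[Thm.~3.2.6]{CLS}), which gives a disjoint decomposition
\begin{equation*}
  X=\bigsqcup_{\s\in\Si}O(\s)
\end{equation*}
where $O(\s)=\T\cdot x_\s$ is the $\T$-orbit through the distinguished point $x_\s$ of the cone $\s$, together with the orbit-closure formula
\begin{equation*}
  \overline{O(\s)}=\bigsqcup_{\tau\in\Si,\,\tau\succeq\s}O(\tau).
\end{equation*}
Applied to the zero cone $\{\0\}\in\Si$, whose distinguished point is precisely the base point $x_0$ of Definition~\ref{def:TV}, this says $\T\cdot x_0=O(\{\0\})$ and therefore
\begin{equation*}
  X\setminus\T\cdot x_0=\bigsqcup_{\s\in\Si,\,\s\neq\{\0\}}O(\s).
\end{equation*}

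Next I would rewrite the left-hand side similarly. By the orbit-closure formula applied to $\rho\in\Si(1)$,
\begin{equation*}
  D_\rho=\overline{\T\cdot x_\rho}=\overline{O(\rho)}=\bigsqcup_{\tau\succeq\rho}O(\tau),
\end{equation*}
so that
\begin{equation*}
  \bigcup_{\rho\in\Si(1)}D_\rho\ =\ \bigcup_{\rho\in\Si(1)}\ \bigsqcup_{\tau\succeq\rho}O(\tau)\ =\ \bigsqcup_{\s\in\Si,\,\s\neq\{\0\}}O(\s),
\end{equation*}
the last equality being the key combinatorial observation: every non-trivial cone $\s\in\Si$ contains at least one ray $\rho\in\Si(1)$, so the orbit $O(\s)$ appears in the union on the left, while the zero cone contains no ray and so $O(\{\0\})=\T\cdot x_0$ does not.

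Comparing the two decompositions yields the stated equality. The only substantive ingredients are the Orbit--Cone Correspondence and the orbit-closure formula, both of which are classical over any algebraically closed field of characteristic zero; no step presents a real obstacle, the argument being essentially a bookkeeping of torus orbits.
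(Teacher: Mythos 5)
Your argument is correct and is exactly the one the paper intends: the proposition is stated with a citation to the Orbit--Cone Correspondence (\cite[Thm.~3.2.6]{CLS}) and no written proof, and your derivation simply spells out that correspondence together with the orbit-closure formula and the observation that every nonzero strongly convex cone of the fan has a ray among its faces. Nothing is missing.
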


\begin{theorem}\label{thm:1-cov-torico}
  Let $X(\Si)$ be a non-degenerate toric variety, $Y$ be a normal algebraic variety and
  $\phi:Y\longrightarrow X$ be a Galois 1-covering. Then $Y$ is a non-degenerate toric variety and $\phi$ is a toric
  1-covering with branching locus $$C=\Br(\phi)\subseteq\bigcup_{\rho\in\Si(1)}D_\rho$$
\end{theorem}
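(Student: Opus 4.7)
The plan is to restrict $\phi$ over the open torus of $X$ and identify the resulting étale cover of the torus with the lattice cover determined by a finite-index sublattice of $N$; a uniqueness-of-normalization argument then forces $Y$ to be the corresponding toric variety.

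First I would show that $\Br(\phi)\subseteq X\setminus T_0$, where $T_0:=\T\cdot x_0$ denotes the open torus orbit. The preimage $\phi^{-1}(T_0)$ is open and dense in $Y$, hence irreducible and normal, and the restricted finite morphism $\phi_0:\phi^{-1}(T_0)\to T_0$ has branch locus contained in $\Br(\phi)\cap T_0$, which is a closed subset of $T_0$ of codimension $\geq 2$. Since $T_0$ is smooth, the Zariski--Nagata purity theorem (already used in the proof of Lemma~\ref{lem:estensione}) forces $\phi_0$ to be étale; Proposition~\ref{prop:Tbordo} then yields the desired inclusion on the branching locus.

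Next I would invoke the structure of connected finite étale covers of a torus. Because $\pet(T_0)$ is abelian, such covers are automatically Galois and correspond bijectively to finite-index overlattices $M\subseteq M'$ of the character lattice: thus $\phi^{-1}(T_0)\cong T_0':=\Spec\K[M']$ is itself a torus with Galois group $M'/M$ over $T_0$, which must coincide with $\Aut^{(1)}(\phi)$ by the Galois hypothesis and the equality of orders $|\Aut^{(1)}(\phi)|=\deg\phi=[M':M]$. Dually, letting $N'\subseteq N$ be the finite-index sublattice of $N$ dual to $M'$, the fan $\Si$ may be regarded as a fan in $N'_\R=N_\R$ with respect to the lattice $N'$. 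This yields a toric variety $X':=X_{\Si,N'}$ together with a finite toric morphism $\psi:X'\to X$ induced by the inclusion $N'\hookrightarrow N$ and whose restriction to the open tori recovers exactly $T_0'\to T_0$; in particular $\psi$ is itself a toric 1-covering of $X$.

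Finally I would conclude by uniqueness of normalization: both $Y$ and $X'$ are normal, irreducible, and finite over $X$, and both share the same function field $\K(T_0')$ (the finite extension of $\K(T_0)$ determined by $M\subseteq M'$). Hence $Y$ and $X'$ coincide as normalizations of $X$ inside $\K(T_0')$, yielding a canonical $X$-iso\-mor\-phism $Y\cong X_{\Si,N'}$. This transfers the toric structure of $X'$ to $Y$ and makes $\phi$ a toric morphism, while non-degeneracy of $Y$ follows at once since $\Si(1)$ still spans $N_\R=N'_\R$ by non-degeneracy of $X$. I expect the main subtlety to lie in the first step, where purity is invoked to eliminate ramification of $\phi$ over $T_0$; after that, the identification $Y\cong X_{\Si,N'}$ is essentially formal.
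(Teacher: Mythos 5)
Your proof is correct, but it follows a genuinely different route from the paper's. The paper defers the proof of Theorem~\ref{thm:1-cov-torico} until after Theorem~\ref{thm:QUOT+}: it invokes the universal 1-covering $\vf:\widetilde{X}\to X$, uses the Galois hypothesis to factor $\vf=\phi\circ f$ through $Y$, identifies $Y\cong\widetilde{X}/H$ for a subgroup $H\leq\Aut(\vf)\cong N/N_1$ via the Galois correspondence, and reads off the toric structure of $Y$ from the intermediate lattice $N_H$. You instead argue directly: purity over the smooth open torus to locate $\Br(\phi)$ in the toric boundary, the classification of connected finite \'etale covers of a torus by finite-index sublattices $N'\subseteq N$ (so the cover of $T_0$ is an isogeny of tori), and uniqueness of the normalization of $X$ in $\K(T_0')$ to identify $Y$ with $X_{\Si,N'}$. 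Your approach is self-contained (no dependence on Theorem~\ref{thm:QUOT+}) and in fact never uses the Galois hypothesis, since connected covers of a torus are automatically Galois and the normalization argument needs only that $Y$ is normal, irreducible and finite over $X$; so it proves a slightly stronger statement. The paper's approach buys brevity given that the universal 1-covering machinery is already in place, and it directly exhibits $Y$ as a quotient of $\widetilde{X}$, which is the form used later. Two small points in your write-up: the claim that $\psi:X_{\Si,N'}\to X$ is a 1-covering is not automatic for an arbitrary finite-index sublattice (it requires the primitive ray generators of $\Si$ in $N$ to lie in $N'$), so it should be deduced \emph{after} the identification $Y\cong X_{\Si,N'}$ from the hypothesis that $\phi$ is \'etale in codimension~1; and the fact that every connected finite \'etale cover of $T_0$ is an isogeny (not merely classified by open subgroups of $\pet(T_0)$) deserves a sentence, namely that the isogeny covers $T_{N'}\to T_N$ realize every open subgroup of $\widehat{N}$. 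Neither point is a genuine gap.
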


 A proof of this result is deferred to \S~\ref{sssez:proof}, after the proof of the following
 Theorem~\ref{thm:QUOT+}.

\subsection{The \ét fundamental group of a toric variety}\label{ssez:pi1TV}

Let us start by recalling the following Grothendieck's remark.

\begin{theorem}[Cor.~1.2 in Exp.~XI, \cite{SGA1}]
  A normal, rational and complete algebraic variety is simply connected.
\end{theorem}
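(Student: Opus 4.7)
The plan is to show that every connected finite \'etale covering $\phi:Y\to X$ has degree one, so that $\phi$ must be an isomorphism and hence $\pet(X,x)\cong\{1\}$. I would start by resolving $X$ and reducing to the known simple connectedness of $\P^n$. Since $\Char\K=0$, Hironaka gives a resolution of singularities $\rho:\widetilde{X}\to X$: then $\widetilde{X}$ is smooth and proper, and birational to $\P^n$ through $X$ (which is rational by hypothesis).

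The key intermediate step is to establish $\pet(\widetilde{X})\cong\{1\}$. This is the birational invariance of the \'etale fundamental group for smooth proper varieties in characteristic zero, applied to the pair $(\widetilde{X},\P^n)$. A self-contained derivation proceeds through a common smooth projective variety $Z$ dominating both $\widetilde{X}$ and $\P^n$ by birational morphisms (obtained from Hironaka applied to the closure of the graph of the birational map $\widetilde{X}\dashrightarrow\P^n$). The image in each of these smooth proper targets of the corresponding exceptional locus is a \emph{small} closed subset, i.e.\ of codimension $\geq 2$; hence the excision Theorem~\ref{thm:excisione} identifies the \'etale fundamental groups of $\widetilde{X}$ and $\P^n$ with that of their common open subset, and the Zariski--Nagata purity theorem (already invoked in Lemma~\ref{lem:estensione}) ensures that \'etale covers defined on this common open subset extend across the exceptional divisors of $Z$. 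Since $\pet(\P^n)\cong\{1\}$, this yields $\pet(\widetilde{X})\cong\{1\}$.

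To conclude, I would pull $\phi$ back along $\rho$: the base change $Y\times_X\widetilde{X}\to\widetilde{X}$ is a finite \'etale covering of degree $d:=\deg\phi$, hence trivial by simple connectedness of $\widetilde{X}$, and therefore admits a section. Projecting that section to $Y$ produces a morphism $\sigma:\widetilde{X}\to Y$ satisfying $\phi\circ\sigma=\rho$. Since $\rho$ is birational and $\phi$ is finite with $Y$ irreducible (being a connected finite \'etale cover of the irreducible $X$), $\sigma$ is automatically surjective and generically finite between proper varieties of the same dimension, so multiplicativity of the degree gives
\[1\ =\ \deg\rho\ =\ \deg\phi\cdot\deg\sigma\,,\]
forcing $\deg\phi=1$ and hence $\phi$ an isomorphism. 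As $\phi$ was an arbitrary connected finite \'etale cover, passing to the inverse limit (\ref{pi_1^et}) over the directed system of Galois \'etale coverings yields $\pet(X,x)\cong\{1\}$.

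The main obstacle is the second paragraph, namely the birational invariance of $\pet$ among smooth proper varieties in characteristic zero: everything else (Hironaka on $X$, the base change, and the final degree computation) is formal once that ingredient is granted. In particular one must take care to avoid circularity, since a careless appeal to Grothendieck's own results would make the argument tautological; the intended derivation relies only on Theorem~\ref{thm:excisione}, Zariski--Nagata purity, and resolution of singularities, all of which are established independently in the literature cited by the paper.
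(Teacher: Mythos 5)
The paper offers no proof of this statement---it is quoted directly from SGA1, Exp.~XI, Cor.~1.2---so your argument can only be judged on its own terms. Its architecture (resolve $X$, show $\widetilde{X}$ is simply connected, produce a section $\sigma$ of the pulled-back cover, compare degrees) is sound, and the endgame is correct: $Y$ is irreducible because it is connected and finite \ét{} over the normal $X$, $\sigma$ is surjective, and $1=\deg\rho=\deg\phi\cdot\deg\sigma$ forces $\deg\phi=1$. The genuine gap is in the step you yourself identify as the crux. It is simply not true that the ``common open subset'' of two birationally equivalent smooth proper varieties has small complement in each of them, so Theorem~\ref{thm:excisione} cannot be invoked on both sides at once: for the quadratic Cremona involution of $\P^2$ (resolved by blowing up the three coordinate points), the locus where the birational map is an isomorphism is the complement of the three coordinate lines on either copy, a divisorial complement isomorphic to $\mathbb{G}_m^2$ whose \ét{} fundamental group is $\widehat{\Z}^2$, not trivial. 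Likewise, Zariski--Nagata purity does not ``extend \ét{} covers across the exceptional divisors of $Z$'': a finite cover of the complement of a divisor may be genuinely branched along that divisor (a double cover of $\P^2$ minus a smooth conic does not extend \ét{}-ly to $\P^2$); purity only rules out branching in codimension $\geq 2$. The true statement you need---birational invariance of $\pet$ for smooth proper varieties in characteristic $0$ (SGA1, Exp.~X)---is proved by running the extension argument separately for each proper birational morphism $f:Z\to\widetilde{X}$ and $g:Z\to\P^n$: restrict a cover of $Z$ to $Z\setminus f^{-1}(S)\cong\widetilde{X}\setminus S$ with $S:=f(\Exc(f))$ small in the smooth target, extend over $\widetilde{X}$ by Lemma~\ref{lem:estensione}, pull back to $Z$, and use that restriction of finite \ét{} covers to a dense open of a connected normal variety is faithful to see the two operations are mutually inverse. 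If you cite that theorem, your proof closes; as written, its derivation does not.

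You should also be aware that the hypotheses ``normal and rational'' permit a much shorter route, essentially the SGA1 one, needing neither a resolution of $X$ nor birational invariance. A birational map $h:\P^n\dashrightarrow X$ is defined on a big open $U\subseteq\P^n$ (its source is smooth, its target proper, so the indeterminacy locus has codimension $\geq 2$); Theorem~\ref{thm:excisione} gives $\pet(U,u)\cong\pet(\P^n,u)\cong\{1\}$, so the pullback of a connected degree-$d$ \ét{} cover $\phi:Y\to X$ to $U$ is trivial. Since $h$ is birational, this says the generic fibre $Y\times_X\Spec\K(X)$ splits into $d$ copies of $\Spec\K(X)$; but $Y$ is irreducible, so that fibre is $\Spec\K(Y)$ with $[\K(Y):\K(X)]=d$, whence $d=1$. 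This is where normality of $X$ (forcing $Y$ irreducible) and rationality (as opposed to mere unirationality) enter in an essential and transparent way.
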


\begin{corollary}\label{cor:Grothendieck}
  A complete toric variety is simply connected.
\end{corollary}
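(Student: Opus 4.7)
The plan is to obtain Corollary~\ref{cor:Grothendieck} as an immediate application of the preceding Grothendieck theorem, by verifying that a complete toric variety satisfies the three hypotheses ``normal, rational and complete''. Completeness is part of the assumption, and normality is built into the very Definition~\ref{def:TV} of toric variety adopted in this paper. So the only substantive point to address is rationality.

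For rationality, I would argue as follows. By Definition~\ref{def:TV}(iii), the orbit map $t\mapsto t\cdot x_0$ realizes the acting torus $\T\cong(\K^*)^n$ as a Zariski open dense subset of $X$. Since $(\K^*)^n$ is itself a Zariski open subset of the affine space $\K^n=\mathbb{A}^n_\K$, its function field coincides with $\K(x_1,\dots,x_n)$, the purely transcendental extension of $\K$ of transcendence degree $n$. Because $\T\cdot x_0$ is open and dense in the irreducible variety $X$, one has $\K(X)=\K(\T\cdot x_0)\cong\K(x_1,\dots,x_n)$, so $X$ is rational.

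Having checked all three hypotheses (normal, rational, complete), the previous theorem of Grothendieck (Cor.~1.2 in Exp.~XI of \cite{SGA1}) applies directly and yields that $\pet(X,x)$ is trivial for any base point $x\in X$, that is, $X$ is simply connected in the sense of \S~\ref{ssez:esistenza}.

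There is no real obstacle here: the only thing worth being careful about is that the notion of ``simply connected'' used in the statement is the algebraic-étale one introduced in this paper, not a topological one; but this is precisely the same meaning as in Grothendieck's original statement, so no further work is required.
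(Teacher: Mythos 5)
Your proposal is correct and matches the paper's intended argument: the corollary is stated as an immediate consequence of the quoted Grothendieck theorem, with normality built into Definition~\ref{def:TV}, completeness assumed, and rationality following from the dense open torus orbit exactly as you describe. (The paper also notes later, in Remark~\ref{rem:fattori torici}, that the corollary can alternatively be recovered as a special case of Danilov's Theorem~\ref{thm:Danilov}, but that is not the primary route.)
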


More general results on the computation of the \ét fundamental group of a toric variety were obtained by Danilov.

\begin{theorem}[Prop.~9.3 in \cite{Danilov}]\label{thm:Danilov}
 Let $X(\Si)$ be a non-degenerate toric variety. Then, for every $x\in X$,
 $$\pet(X,x)\cong N\left/N_\Si\right.$$
 where $N_\Si\subseteq N$ is the sublattice spanned by elements in $|\Si|\cap N$.
\end{theorem}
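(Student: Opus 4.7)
The plan is to classify connected finite Galois \'etale coverings of $X$ in terms of finite index sublattices $N_\Si \subseteq N' \subseteq N$, and then take the inverse limit of the corresponding Galois groups.

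First I would show how to produce Galois \'etale coverings from sublattices. Given a finite index sublattice $N'\subseteq N$, regard the same fan $\Si$ in $N_\R=N'_\R$ but with the finer lattice $N'$, obtaining a toric variety $X(\Si)_{N'}$. The inclusion $N'\hookrightarrow N$ induces a toric morphism $\vf_{N'}\colon X(\Si)_{N'}\to X$, which is a finite Galois covering with group isomorphic to $\Hom(M'/M,\K^{*})\cong N/N'$ (here $M'/M$ is finite and $\K$ is algebraically closed of characteristic zero). The claim to prove is that $\vf_{N'}$ is \'etale if and only if $N_\Si\subseteq N'$. The \'etaleness condition along the orbit $O_\s\subseteq X$ of a cone $\s\in\Si$ is controlled by the local monodromy at a point of $O_\s$; after passing to a toric neighbourhood which splits as a product of the torus $\T_{N(\s)}$ and the affine toric variety $U_\s$ centred at its fixed point, the check reduces to unramifiedness at the fixed point of $U_\s$, governed by the quotient $(N\cap\R\s)/(N'\cap\R\s)$. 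Using the lattice identity
\[
    N_\Si \;=\; \mathcal{L}\bigl(|\Si|\cap N\bigr) \;=\; \sum_{\s\in\Si}\bigl(N\cap\R\s\bigr),
\]
which follows from the fact that each $N\cap\R\s$ coincides with the saturation of the submonoid $\s\cap N$, the \'etaleness condition over all strata collapses to the single criterion $N_\Si\subseteq N'$.

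Conversely, every connected Galois \'etale covering $\psi\colon Y\to X$ is of the form $\vf_{N'}$ for a uniquely determined finite index sublattice $N'\subseteq N$. Since every \'etale covering is in particular a Galois 1-covering with empty branching locus, and the hypothesis that $|\Si|$ spans $N_\R$ is precisely the non-degeneracy condition on $X$, Theorem~\ref{thm:1-cov-torico} forces $Y$ to be a non-degenerate toric variety with $\psi$ a toric morphism; the sublattice $N'\subseteq N$ is then determined by the induced homomorphism of acting tori.

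Combining the two directions, the direct system of connected Galois \'etale coverings of $X$ is anti-equivalent to the inverse system of finite index sublattices $N_\Si\subseteq N'\subseteq N$ ordered by reverse inclusion, with the Galois group of the cover indexed by $N'$ equal to $N/N'$. Passing to the inverse limit,
\[
    \pet(X,x)\;\cong\;\varprojlim_{\substack{N_\Si\subseteq N'\subseteq N\\ [N:N']<\infty}} N/N' \;\cong\;\widehat{N/N_\Si}\;\cong\; N/N_\Si,
\]
the last isomorphism invoking the standing hypothesis: $|\Si|=N_\R$ forces $N_\Si$ to have full rank in $N$, so $N/N_\Si$ is finite and agrees with its own pro-finite completion. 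The main technical obstacle in this programme is the local ramification computation that pins down the criterion $N_\Si\subseteq N'$ at non-smooth toric strata; once that semigroup-theoretic computation is in place, the appeal to Theorem~\ref{thm:1-cov-torico} and the passage to the inverse limit are essentially formal.
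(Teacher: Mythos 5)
The paper does not prove this statement: it is imported verbatim as Prop.~9.3 of Danilov's survey, so there is no internal proof to compare with. Judged on its own terms, the forward half of your argument is sound: the covers $\vf_{N'}:X(\Si)_{N'}\to X(\Si)_N$ attached to finite-index sublattices $N'\subseteq N$, the identification of the deck group with $\Hom(M'/M,\K^*)\cong N/N'$, the stabilizer $(N\cap\R\s)/(N'\cap\R\s)$ at the distinguished point of $U_\s$, and the identity $N_\Si=\sum_{\s}(N\cap\R\s)$ (each $N\cap\R\s$ is the \emph{group generated by} the submonoid $\s\cap N$, not its saturation, but the identity is correct) do assemble into the criterion that $\vf_{N'}$ is \'etale iff $N_\Si\subseteq N'$; and since $|\Si|$ spans $N_\R$ the lattice $N_\Si$ has finite index, so the inverse limit collapses onto its minimal term $N/N_\Si$ with no real need for the profinite-completion detour.

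The genuine gap is in the converse --- the surjectivity of your classification, which is the actual content of Danilov's theorem. You invoke Theorem~\ref{thm:1-cov-torico} to force an arbitrary connected Galois \'etale cover $Y\to X$ to be toric. But in this paper Theorem~\ref{thm:1-cov-torico} is deduced from Theorem~\ref{thm:QUOT+}, which rests on Theorem~\ref{thm:pi11toric}, which is proved by applying Theorem~\ref{thm:Danilov} itself: the appeal is circular. Nor is this an artifact of the paper's ordering; the assertion that every Galois (1-)covering of a non-degenerate toric variety is toric is essentially equivalent in depth to the statement you are trying to prove, so it cannot serve as an input. The standard non-circular route is: restrict $\psi$ to the open torus $\T\subseteq X$, classify connected finite \'etale covers of $\T\cong(\K^*)^n$ directly by Kummer theory (so $\pet(\T,x)\cong N\otimes\widehat{\Z}$ and the restricted cover is $\T_{N'}\to\T_{N}$ for a unique finite-index $N'\subseteq N$), then recover $Y$ as the normalization of $X$ in $\K(\T_{N'})$, which is $X(\Si)_{N'}$; only at that point does your \'etaleness criterion legitimately pin down $N_\Si\subseteq N'$. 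Without that replacement, the proposal assumes what it sets out to prove.
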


\begin{remark}
  Recall that a toric variety $X(\Si)$ is complete if and only if $|\Si|=N_\R$. Then Danilov's
  Theorem~\ref{thm:Danilov} implies Corollary~\ref{cor:Grothendieck}, as a particular case.

  \noindent Moreover, recalling Remark~\ref{rem:fattori torici}, up to torus factors, Danilov's Theorem~\ref{thm:Danilov} applies to every toric variety.

  \noindent Finally, notice that, up to torus factors, a toric variety turns out to admit finite (\ét) fundamental group, since
  $N_{\Si}$ is a full sublattice of $N$: for $\K=\C$, the analytic counterpart of Theorem~\ref{thm:Danilov} is proved
  in \cite[Thm.~12.1.10]{CLS}. Then, for $\K=\C$, results of section \S~\ref{sez:1-covering} apply as
  well to the fundamental group of the associated analytic variety $X^{\text{an}}$.
\end{remark}

\subsection{The \ét fundamental group in codimension 1 of a toric variety}

We are now in a position to apply results of \S~\ref{sez:1-covering} and compute the \ét fundamental group of a
toric variety without torus factors.

\begin{theorem}\label{thm:pi11toric}
  Let $X(\Si)$ be a non-degenerate toric variety and let $X_1=X(\Si(1))$ the toric variety whose fan is given by the
  $1$-skeleton $\Si(1)$ of $\Si$. Then $X_1$ is a big open subset of the regular locus $X_{\text{\rm reg}}$ of $X$
  and, for every point $x\in X_1$,
  \begin{equation*}
    \pet(X,x)^{(1)}\cong\pet(X_{\text{\rm reg}},x)\cong\pet(X_1,x)\cong N\left/N_1\right.
  \end{equation*}
  where $N_1\subseteq N$ is the sublattice spanned by $\Si(1)\cap N$.
\end{theorem}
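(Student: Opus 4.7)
The plan is to chain together three of the previously established isomorphisms (Theorems~\ref{thm:pi1normale}, \ref{thm:excisione} and Danilov's Theorem~\ref{thm:Danilov}) after verifying the geometric hypotheses that allow us to apply each of them to the toric setting.

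First I would establish the two geometric facts needed to make the chain go through: that $X_1=X(\Sigma(1))$ is smooth and that it is a big open subset of $X(\Sigma)$. Smoothness is standard toric geometry: $X_1$ is covered by the affine open $U_{\{\0\}}\cong\T$ and by the affines $U_{\langle\v_i\rangle}$ corresponding to the single rays of $\Sigma(1)$; each of the latter is isomorphic to $\K\times(\K^*)^{n-1}$, because the primitive lattice generator $\v_i$ of each ray can be completed to a $\Z$-basis of $N$. That $X_1$ is big in $X$ follows from the orbit-cone correspondence (see e.g. \cite[Thm.~3.2.6]{CLS}): the complement $X\setminus X_1$ is the union of the closures $\overline{O}_\sigma$ of the torus orbits indexed by cones $\sigma\in\Sigma$ with $\dim\sigma\geq 2$, and each such orbit has codimension $\dim\sigma\geq 2$ in $X$. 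In particular $X_1\subseteq X_{\text{reg}}$, and since $X_{\text{reg}}$ is open in $X$ the closed subset $X_{\text{reg}}\setminus X_1\subseteq X\setminus X_1$ is small in $X_{\text{reg}}$ as well.

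Next, I fix a base point $x\in X_1\subseteq X_{\text{reg}}$. Since $X$ is normal (it is toric), Theorem~\ref{thm:pi1normale} yields
$$\pet(X,x)^{(1)}\cong\pet(X_{\text{reg}},x).$$
Since $X_{\text{reg}}$ is smooth and $X_{\text{reg}}\setminus X_1$ is small in it by the previous paragraph, the excision Theorem~\ref{thm:excisione} applied to the open embedding $X_1\hookrightarrow X_{\text{reg}}$ gives
$$\pet(X_1,x)\cong\pet(X_{\text{reg}},x).$$

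For the final identification I would apply Danilov's Theorem~\ref{thm:Danilov} to the toric variety $X_1$ with defining fan $\Sigma(1)$. The hypothesis of the theorem is that the support $|\Sigma(1)|$ spans $N_\R$, which is exactly condition (2) of Remark~\ref{rem:fattori torici}, and is therefore equivalent to the non-degeneracy of $X(\Sigma)$. Thus
$$\pet(X_1,x)\cong N/N_{\Sigma(1)},$$
and because $|\Sigma(1)|$ is the union of the rays, the lattice $N_{\Sigma(1)}$ spanned by $|\Sigma(1)|\cap N$ coincides with the sublattice $N_1=\mathcal{L}(\v_1,\ldots,\v_m)$ generated by the primitive ray generators. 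Chaining the three isomorphisms yields the claimed identification $\pet(X,x)^{(1)}\cong N/N_1$.

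I do not expect a real obstacle here; this is essentially an assembly argument. The only points needing care are the verification that $X_1$ is smooth and big (orbit-cone correspondence plus the explicit form of $U_{\langle\v_i\rangle}$), and the bookkeeping that the lattice appearing in Danilov's theorem for the fan $\Sigma(1)$ is exactly the lattice $N_1$ named in the statement.
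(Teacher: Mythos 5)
Your proposal is correct and follows essentially the same route as the paper's proof: establish that $X_1$ is a smooth big open subset of $X_{\text{reg}}$, then chain Theorem~\ref{thm:pi1normale}, the excision Theorem~\ref{thm:excisione}, and Danilov's Theorem~\ref{thm:Danilov}. The only difference is that you spell out the smoothness and bigness verifications (via the explicit form of $U_{\langle\v_i\rangle}$ and the orbit--cone correspondence) in slightly more detail than the paper does.
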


\begin{proof}
  Since $X$ is a normal algebraic variety,  Theorem~\ref{thm:pi1normale} gives the following isomorphism
  \begin{equation}\label{1}
    \pet(X,x)^{(1)}\cong\pet(X_{\rm{reg}},x)
  \end{equation}
  for every regular point $x\in X_{\rm{reg}}$. Notice that $X_1$ is smooth: its fan $\Si(1)$ is regular as
  consisting of 1-dimensional cones, only. Moreover, $X_1$ turns out to be a big open subset of $X$. Then
  $X_1\subseteq X_{\rm{reg}}$ is a big open subset of $X_{\rm{reg}}$, too. By the excision property given by
  Theorem~\ref{thm:excisione}, one has
  \begin{equation}\label{2}
    \pet(X_{\rm{reg}},x)\cong\pet(X_1,x)
  \end{equation}
  for every $x\in X_1$. Finally, since $\Si(1)$ spans $N_\R$, one applies Danilov's Theorem~\ref{thm:Danilov} to get
  \begin{equation}\label{3}
    \pet(X_1,x)\cong N\left/N_1\right.
  \end{equation}
  The proof ends up by putting together (\ref{1}), (\ref{2}) and (\ref{3}).
\end{proof}

\begin{remark}
  For $\K=\C$, the analytic counterpart of Theorem~\ref{thm:Danilov} given by \cite[Thm.~12.1.10]{CLS} shows that
  $\pi_1(X_1^{\text{an}},x)\cong N/N_1$. This suffices to show that the argument proving Theorem~\ref{thm:pi11toric}
  applies to the analytic setup, as well. Then one gets analogous statements for the fundamental group in codimension
  1 of the associated analytic variety $X^{\text{an}}$ and this is what Buczy{\'n}ska did in \cite[\S~4]{Buczynska} for
  any complex toric variety, by obviously adding the contribution of any torus factor.
\end{remark}

\subsection{The universal 1-covering of a non-degenerate toric variety}

 It is a well known fact, already observed in the beginning of \S~\ref{ssez:esistenza}, that in general the universal
 \ét covering of an algebraic variety does not exist. The same clearly holds for the universal (local) 1-covering.
 Therefore exhibiting a class of algebraic varieties admitting either a universal \ét covering or a universal (local,
 in case) 1-covering, is always of some interest. Recently, jointly with Lea Terracini, we proved that $\Q$-factorial
 and complete toric varieties, over the complex field $\C$, always admit a universal 1-covering
 \cite[Thm.~2.2]{RT-QUOT}, which turns out to be still a $\Q$-factorial and complete toric variety, coherently with
 Theorem~\ref{thm:1-cov-torico}. In particular a universal
 1-covering of this kind is always a PWS (in the sense of Definition~\ref{def:PWS}) canonically determined by the
 initially given $\Q$-factorial complete toric variety.

 The present section is meant to generalize this result over the ground field and to extending it to the bigger
 range of \emph{non-degenerate} toric varieties, so dropping both hypothesis of completeness and $\Q$-factoriality.

\begin{theorem}[Compare with Thm.~2.2 and Rem.~2.3 in \cite{RT-QUOT}]\label{thm:QUOT+}
A non-degenerate toric va\-rie\-ty $X$ over an algebraically closed field $\K$ with $\Char\K=0$, admits a universal
1-covering $\vf:\widetilde{X}\longrightarrow X$ which is a toric 1-covering of non-degenerate toric varieties. The
induced pull-back on divisors gives a group epimorphism $\vf^*:\Cl(X)\twoheadrightarrow\Cl(\widetilde{X})$ whose
kernel is $$\ker(\vf^*)\cong\Tors(\Cl(X))\cong\pet(X,x)^{(1)}\cong\pet(X_{\rm{reg}},x)$$
for every regular point $x\in X_{\text{\rm reg}}$.

\noindent In particular every non-degenerate toric variety $X$ can be canonically described as a finite geometric
quotient $X\cong \widetilde{X}/\pet(X,x)^{(1)}$ of the universal 1-covering $\widetilde{X}$ by the torus-equivariant
action of $\pet(X,x)^{(1)}\cong \Tors(\Cl(X))$ on the fibers of $\vf$.

\noindent Moreover, if $V$ is a fan matrix of $X$ then $\widetilde{V}=\G(\G(V))$ is a fan matrix of $\widetilde{X}$.

\noindent By construction $\widetilde{X}$ is $\Q$-factorial (complete) if and only if $X$ is $\Q$-factorial
(complete). In particular, if $X$ is both complete and $\Q$-factorial then its universal 1-covering $\widetilde{X}$ is
a PWS.
\end{theorem}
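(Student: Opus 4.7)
The plan is to construct $\widetilde{X}$ explicitly via iterated Gale duality applied to a fan matrix $V$ of $X$, and then verify the required properties by combining Theorem~\ref{thm:pi11toric}, Proposition~\ref{prop:univ-1-cov}, and Theorem~\ref{thm:pi1normale}. First, starting from a reduced fan matrix $V$ of $X(\Si)$, I would set $Q:=\G(V)$ (a weight matrix of $X$) and $\widetilde{V}:=\G(Q)$. The key properties of this iteration, established in \cite{RT-LA&GD}, are that $\widetilde{V}$ is a reduced $CF$-matrix and that each of its columns $\widetilde{\v}_i$ is primitive in the cotorsion-free lattice $\widetilde{N}:=\mathcal{L}_c(\widetilde{V})$ and spans the same combinatorial ray as $\v_i$, up to the natural inclusion $\iota:\widetilde{N}\hookrightarrow N$ whose cokernel is the finite group killing the cotorsion $N/\mathcal{L}_c(V)$. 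Letting $\widetilde{\Si}\in\SF(\widetilde{V})$ be the fan with the same index set $\I_{\widetilde{\Si}}=\I_\Si$, define $\widetilde{X}:=X(\widetilde{\Si})$.

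Next I would produce and analyze the morphism $\vf$. Since $\iota\otimes\R=\id$, the map $\iota$ sends $\widetilde{\Si}$ to $\Si$, so it determines a toric morphism $\vf:\widetilde{X}\to X$ in the sense of Definition~\ref{def:TVmorphism}. On the big tori, $\vf_{\T}:\widetilde{\T}\to\T$ is the finite \ét quotient by $\mu:=\Hom(N/\widetilde{N},\K^*)$, which is canonically isomorphic to $\Tors(\Cl(X))$ by the standard Gale-dual computation of the class group. Because each $\widetilde{\v}_i$ is primitive in $\widetilde{N}$ and maps to the primitive ray generator $\v_i$ of $\Si(1)$, the $\mu$-action remains free along every $1$-dimensional torus orbit, so $\vf$ restricts to a Galois \ét covering over the big open $X_1:=X(\Si(1))\subseteq X_{\mathrm{reg}}$. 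This exhibits $\vf$ as a Galois $1$-covering with $\Aut^{(1)}(\vf)\cong\mu$ and $\Br\vf\subseteq X\setminus X_1$.

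Universality then follows by invoking Proposition~\ref{prop:univ-1-cov}: since $\widetilde{V}$ is a $CF$-matrix, the sublattice of $\widetilde{N}$ spanned by $\widetilde{\Si}(1)\cap\widetilde{N}$ is all of $\widetilde{N}$, so Theorem~\ref{thm:pi11toric} applied to the non-degenerate toric variety $\widetilde{X}$ yields $\pet(\widetilde{X}_{\mathrm{reg}},\widetilde{x})\cong\{1\}$. The isomorphism chain $\ker(\vf^*)\cong\Tors(\Cl(X))\cong\pet(X,x)^{(1)}\cong\pet(X_{\mathrm{reg}},x)$ now combines: the explicit toric pull-back $\vf^\#(D_{\v_i})=D_{\widetilde{\v}_i}$, which on class groups realizes $\vf^*$ as the quotient of $\Cl(X)$ by its torsion; Theorem~\ref{thm:pi11toric} applied to $X$; and Theorem~\ref{thm:pi1normale} to pass from $\pet(X,x)^{(1)}$ to $\pet(X_{\mathrm{reg}},x)$.

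The remaining assertions are combinatorial. The quotient description $X\cong\widetilde{X}/\mu$ is immediate because $\vf_\T$ is a geometric quotient by $\mu$ and the $\mu$-action extends $\widetilde{\T}$-equivariantly to $\widetilde{X}$. Simpliciality (that is, $\Q$-factoriality) and completeness $|\widetilde{\Si}|=\widetilde{N}_\R=N_\R$ are invariants of $\I_\Si$ and of the $\R$-span of the rays, hence are preserved verbatim; the final PWS statement is then Definition~\ref{def:PWS} applied directly. I expect the main technical obstacle to be the precise étale control of $\vf$ along the torus-invariant prime divisors, so as to confirm that $\Br\vf$ has codimension at least two: granting this (which boils down to primitivity of $\widetilde{\v}_i$ in $\widetilde{N}$), everything else assembles from the results of \S\ref{sez:1-covering} applied to the normal toric variety $\widetilde{X}$, uniformly in $\K$ and without requiring completeness or $\Q$-factoriality.
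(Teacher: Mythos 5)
Your proposal is correct and follows essentially the same route as the paper: construct $\widetilde{X}$ from the double Gale dual $\widetilde{V}=\G(\G(V))$ with the fan indexed by $\I_\Si$, realize $\vf$ as the quotient by the finite group $N/N_1\cong\Tors(\Cl(X))$ (the paper's Lemma~\ref{lem:tors}), and deduce universality by applying Theorem~\ref{thm:pi11toric} to $\widetilde{X}$ itself to see it is $1$-connected. Your extra care in checking that primitivity of the $\v_i$ in $N_1\subseteq N$ forces the quotient to be unramified over the ray orbits, so that $\Br\vf$ is small, is a point the paper leaves implicit but is exactly the right verification.
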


\begin{corollary}[Rem.~2.4 in \cite{RT-QUOT}, Prop.~3.1.3 in \cite{RT-LA&GD}]\label{cor:QUOT}
   Consider a toric 1-covering $\phi:Y\longrightarrow X$ of a non-degenerate toric variety $X$ over an algebraically
   closed field $\K$ with $\Char\K=0$. If $V$ and $W$ are fan matrices of $X$ and $Y$, respectively, then there exists
   a unique matrix $\b\in\GL_n(\Q)\cap\mathbf{M}(n,n;\Z)$ such that $V=\b\cdot W$.

   Moreover if $X$ is $\Q$-factorial then also $Y$ is, and $\phi^*:\Cl(X){\twoheadrightarrow}\Cl(Y)$ is a group
   epimorphism inducing a $\Q$-module isomorphism
  $$\xymatrix{\Pic(X)\otimes_\Z\Q\cong\Cl(X)\otimes_\Z\Q\ar[r]^-{\phi^*_\Q}_-\cong&
  \Cl(Y)\otimes_\Z\Q\cong\Pic(Y)\otimes_\Z\Q}$$
\end{corollary}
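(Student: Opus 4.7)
My plan proceeds in two stages: first, extract the matrix $\b$ from the lattice morphism underlying $\phi$ and prove $V=\b W$; second, derive the class-group statements from this identity.

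Because $\phi:Y\to X$ is a morphism of toric varieties (Def.~\ref{def:TVmorphism}), it is induced by a $\Z$-linear map $\phi_N:N_Y\to N_X$ between cocharacter lattices, compatible with the fans. Finiteness of $\phi$ combined with $\dim Y=\dim X=n$ forces $\phi_N$ to be injective with finite cokernel, so after choosing $\Z$-bases of $N_Y$ and $N_X$ it is represented by a unique matrix $\b\in\mathbf{M}(n,n;\Z)\cap\GL_n(\Q)$.

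Next, I would show that $\b$ induces a bijection $\Si_Y(1)\leftrightarrow\Si_X(1)$ on rays, sending primitive generators to primitive generators. Injectivity is immediate from injectivity of $\b$ on $N_Y$. For surjectivity, any invariant prime $D_{\rho}\subset X$ has non-empty preimage which, since $\Br\phi$ is small, is generically \'etale over $D_{\rho}\setminus\Br\phi$; its closure must therefore contain a torus-invariant prime $D_{\rho'}\subset Y$ whose primitive generator $w'$ satisfies $\phi_N(w')\in\rho$. Writing $\b w'=k\,v$ with $v$ the primitive generator of $\rho$, one identifies $k$ with the ramification index of $\phi$ along $D_{\rho'}$; the 1-covering hypothesis forces $k=1$, because no prime divisor of $Y$ can lie inside the small closed subset $\phi^{-1}(\Br\phi)$. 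After reordering the columns of $V$ to match the bijection, this yields $V=\b W$; uniqueness of $\b$ then follows from $\rk W=n$. The \emph{main obstacle} is precisely this step: reconciling the combinatorial condition ``$\b$ sends primitive to primitive'' with the algebraic condition ``\'etale at the generic point of $D_{\rho'}$'' requires a local analysis on each invariant affine chart $U_{\rho'}$, relating the ramification index to the index $[\rho\cap N_X:\phi_N(\rho'\cap N_Y)]$.

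For $\Q$-factoriality, recall that a toric variety is $\Q$-factorial iff its fan is simplicial; since $\b$ is an $\R$-linear isomorphism bijecting $\Si_Y$ onto $\Si_X$, it preserves linear independence within each cone, so simpliciality transfers from $X$ to $Y$. For the class group, use the standard toric presentations $\Cl(X)\cong\Z^m/V^T(M_X)$ and $\Cl(Y)\cong\Z^m/W^T(M_Y)$; formula~(\ref{Weilpullback}) applied to torus-invariant primes gives $\phi^\#(D_{\rho})=D_{\rho'}$ with multiplicity one (thanks to $k=1$), so $\phi^*$ is induced by the identity on $\Z^m$ under the bijection of rays. The relation $V^T=W^T\b^T$ yields $V^T(M_X)\subseteq W^T(M_Y)$, making $\phi^*:\Cl(X)\twoheadrightarrow\Cl(Y)$ well-defined and surjective, with kernel $W^T(M_Y)/V^T(M_X)\cong M_Y/\b^T(M_X)$, a finite group of order $|\det\b|$. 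Tensoring with $\Q$ kills this torsion; combining with the standard isomorphism $\Pic\otimes\Q\cong\Cl\otimes\Q$ for $\Q$-factorial varieties concludes the proof.
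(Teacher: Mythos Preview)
Your argument is correct and considerably more self-contained than the paper's. The paper's proof simply defers the existence and uniqueness of $\b$ to the cited references \cite[Prop.~3.1.3]{RT-LA&GD} and \cite[Rem.~2.4]{RT-QUOT}, noting only that the reasoning is ``completely $\Z$-linear'', and then invokes Theorem~\ref{thm:QUOT+} (the existence of the universal 1-covering $\widetilde{X}$ with $\ker\vf^*=\Tors(\Cl(X))$ and the transfer of $\Q$-factoriality through the combinatorial fan construction (\ref{Sigmatilde})) to obtain the second part. Your route instead bypasses the universal covering entirely: you read off $\b$ from the lattice map $\phi_N$, use the \'etale-in-codimension-1 hypothesis to force all ramification indices $k=1$ along invariant primes (hence the ray bijection preserves primitive generators), and then compute $\phi^*$ directly via the standard presentations $\Cl\cong\Z^m/\Ls_r(V)$. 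This is cleaner for a reader who does not have the cited papers at hand, and it makes transparent exactly where the 1-covering hypothesis enters.

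One small overstatement: you assert that $\b$ ``bijects $\Si_Y$ onto $\Si_X$'', but you have only established the bijection on $\Si(1)$. The full fan bijection is true (it is what underlies (\ref{Sigmatilde}) and the argument in \S\ref{sssez:proof}), but it requires an extra step. Fortunately your $\Q$-factoriality deduction does not actually need it: fan compatibility sends each $\sigma'\in\Si_Y$ into some $\sigma\in\Si_X$, and since $\b$ carries the rays of $\sigma'$ injectively to rays of the simplicial cone $\sigma$, linear independence pulls back.
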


\begin{proof}[Proof of Thm.~\ref{thm:QUOT+}]  Calling $n=\dim X$ and $r=\rk\Cl(X)$, recall the definition of
$\I_\Si\subseteq\mathfrak{P}\{1,\dots,n+r\}$ given in \ref{sssez:lista}. Let $V$ be a fan matrix of $X$. Then
$\Si(1)=\{\langle\v_i\rangle\,|\,\v_i\ \text{is the $i$-th column of}\ V\}$\,. Consider the sublattice $N_1\subseteq
N=\Z^n$ spanned by the $\v_i$'s.
Since $X$ is non-degenerate, the lattice $N_1$ is a full sublattice of $N$ and $N/N_1$ is a finite abelian group. Let
$\widetilde{V}=\G(\G(V))$ be a double Gale dual matrix of $V$ and consider the fan
\begin{equation}\label{Sigmatilde}
  \widetilde{\Si}:=\{\langle \widetilde{V}_I\rangle\,|\,I\in\I_\Si\}\subseteq\mathfrak{P}(N_1)
\end{equation}
defining a toric variety $\widetilde{X}=\widetilde{X}(\widetilde{\Si})$. The natural inclusion $N_1\hookrightarrow
N=\Z^n$ induces a surjection $\widetilde{X}\twoheadrightarrow X$ which turns out to be the canonical projection on the
quotient of $\widetilde{X}$ by the action of the finite abelian group $N/N_1$. Theorems~\ref{thm:pi1normale} and \ref{thm:pi11toric} give that
$$\pet(X,x)^{(1)}\cong\pet(X_{\rm{reg}},x)\cong N/N_1$$
for every $x\in X_{\rm{reg}}$. The following Lemma~\ref{lem:tors} shows that $N/N_1\cong\Tors(\Cl(X))$.
The same argument applied to $\widetilde{X}$ shows that it is 1-connected and $\widetilde{X}\twoheadrightarrow X$
turns out to be the universal 1-covering of $X$. Moreover $\Tors(\Cl(\widetilde{X}))=0$ and
$\rk\Cl(\widetilde{X})=\rk(\Cl(X))=r$. By the construction (\ref{Sigmatilde}) of the fan $\widetilde{\Si}$, one
clearly sees that $\widetilde{X}$ is $\Q$-factorial (complete) if and only if $X$ is.
  \end{proof}

\begin{lemma}[Compare with Thm.~2.4 in \cite{RT-LA&GD}]\label{lem:tors}
Let $X(\Si)$ be a non-degenerate toric variety and $N_1\subseteq N$ be the sublattice spanned by primitive generators
of rays in $\Si(1)$. Then
 \begin{equation*}
   \Tors(\Cl(X))\cong N/N_1
 \end{equation*}
\end{lemma}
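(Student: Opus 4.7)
The plan is to use the standard presentation of the class group of a toric variety
\begin{equation*}
  0\longrightarrow M\stackrel{V^T}{\longrightarrow}\Z^m\longrightarrow \Cl(X)\longrightarrow 0
\end{equation*}
(valid whenever the 1--skeleton $\Si(1)$ spans $N_\R$, i.e.\ when $X$ is non--degenerate, see e.g.\ \cite[Thm.~4.1.3]{CLS}), where $V=(\v_1,\ldots,\v_m)$ is a fan matrix of $X$ and $V^T$ sends a character $m\in M$ to the tuple $(\langle m,\v_i\rangle)_{i=1}^m$. The strategy is then to decompose this sequence according to the intermediate full--rank sublattice $N_1\subseteq N$, and to show that the obstruction to torsion--freeness of $\Cl(X)$ is precisely localized in the quotient $N/N_1$.

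First I would factor the fan matrix as $V=\iota\circ V'$, where $V':\Z^m\twoheadrightarrow N_1$ is the surjection onto the image $N_1=\mathcal{L}_c(V)$ and $\iota:N_1\hookrightarrow N$ is the inclusion. Dualising, one obtains $V^T=V'^T\circ\iota^T$ with both factors injective: $\iota^T:M\hookrightarrow N_1^\vee$ is injective because $\Hom(N/N_1,\Z)=0$ (as $N/N_1$ is finite, by non--degeneracy), and $V'^T:N_1^\vee\hookrightarrow\Z^m$ is injective because $V'$ is surjective. Applying $\Hom(-,\Z)$ to the short exact sequence $0\to N_1\to N\to N/N_1\to 0$ identifies $N_1^\vee/M\cong\Ext^1(N/N_1,\Z)\cong N/N_1$.

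The key identity follows then from the snake--lemma--style exact sequence
\begin{equation*}
  0\longrightarrow N_1^\vee/M\longrightarrow \Z^m/M\longrightarrow \Z^m/N_1^\vee\longrightarrow 0\,,\ \text{i.e.}\ 0\longrightarrow N/N_1\longrightarrow\Cl(X)\longrightarrow\coker(V'^T)\longrightarrow 0.
\end{equation*}
The remaining step, which is the only nontrivial point, is to check that $\coker(V'^T)$ is free of rank $m-n$. This follows because the sequence $0\to\ker V'\to\Z^m\to N_1\to 0$ splits ($N_1$ being free), hence so does its $\Z$--dual; equivalently, a Gale dual of $V'$ has maximal--rank columns spanning the full lattice $\Z^{m-n}$. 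Given freeness of $\coker(V'^T)$, the displayed sequence splits and one concludes
\begin{equation*}
  \Cl(X)\cong \frac{N}{N_1}\oplus\Z^{m-n},\qquad \Tors(\Cl(X))\cong \frac{N}{N_1}.
\end{equation*}

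The main (and really only) obstacle is the bookkeeping in the factorisation $V=\iota\circ V'$ and the identification $N_1^\vee/M\cong N/N_1$ via $\Ext^1$; everything else is a formal consequence of the toric class group sequence and of the splitting of sequences of free abelian groups.
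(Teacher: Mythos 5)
Your proof is correct, but it takes a genuinely different route from the paper's. Both arguments start from the presentation $0\to M\stackrel{V^T}{\to}\Z^m\to\Cl(X)\to 0$, but from there the paper stays inside its matrix framework: it invokes the fact that the torsion of the cokernel of an integer matrix agrees with that of its transpose (so $\Tors(\Z^{m}/\mathcal{L}_r(V))\cong\Tors(\Z^n/\mathcal{L}_c(V))$, via Smith normal form/elementary divisors) and then reads off $\Z^n/\mathcal{L}_c(V)\cong N/N_1$ from the Hermite normal form of $V^T$, whose nonzero rows give a basis of $N_1$. You instead factor $V$ through its image $N_1$, dualize, identify $N_1^\vee/M\cong\Ext^1(N/N_1,\Z)\cong N/N_1$, and use the third isomorphism theorem together with the splitting of $0\to\ker V'\to\Z^m\to N_1\to 0$ to exhibit the canonical short exact sequence $0\to N/N_1\to\Cl(X)\to\Z^{m-n}\to 0$ with free cokernel. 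Your version is more structural and actually yields slightly more information (a canonical copy of $N/N_1$ inside $\Cl(X)$ and an explicit, if non-canonical, splitting $\Cl(X)\cong N/N_1\oplus\Z^{m-n}$), whereas the paper's version is the one that meshes with the Gale-duality/HNF machinery used throughout (and is essentially a citation of Thm.~2.4 in the reference, adapted to the non-complete case). Every step you use — surjectivity of $V'$ onto $N_1$, injectivity of both dual factors, finiteness of $N/N_1$ from non-degeneracy, and freeness of $\coker(V'^T)$ — checks out.
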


\begin{proof} The proof is the same as in \cite[Thm.~2.4]{RT-LA&GD}. Anyway it is here reported to adapting the key
argument to the current weaker hypotheses.

  Let $\Div_\T(X)$ denotes the group of torus invariant Weil divisors. Then there is the following well known short
  exact sequence (see e.g. \cite[Thm.~4.1.3]{CLS})
  \begin{equation*}
    \xymatrix{0\ar[r]&M\ar[r]^-{div}&\Div_\T(X)\ar[r]^-d&\Cl(X)\ar[r]&0}
  \end{equation*}
  Adopting the same notation as in the proof of Thm.~\ref{thm:QUOT+}, this gives
  \begin{equation*}
    \Cl(X)\cong\Div_\T(X)\left/\im(div)\right.\cong\Z^{n+r}\left/\Ls_r(V)\right.
  \end{equation*}
  where $V$ is a fan matrix of $X$ (recall notation introduced in \ref{sssez:lista}). Then
  \begin{equation*}
    \Tors(\Cl(X))\cong\Tors(\Z^{n+r}/\Ls_r(V))\cong\Tors(\Z^{n}/\Ls_c(V))\cong\Z^n/\Ls_r(T_n)
  \end{equation*}
  where $\left(
                   \begin{array}{c}
                     T_n \\
                     \mathbf{0} \\
                   \end{array}
                 \right)$ is the Hermite normal from of the transpose matrix $V^T$. In particular the rows of $T_n$
                 give a basis of $N_1$, meaning that $N/N_1\cong\Z^n/\Ls_r(T_n)$.
\end{proof}

\begin{proof}[Proof of Cor.\ref{cor:QUOT}]
  The first part of the statement follows immediately by \cite[Prop.~3.1.3]{RT-LA&GD} (see also
  \cite[Rem.~2.4]{RT-QUOT}) whose argument is completely $\Z$-linear. The second part is then an immediate consequence
  of Theorem~\ref{thm:QUOT+}.
\end{proof}

\subsubsection{A proof of Theorem~\ref{thm:1-cov-torico}}\label{sssez:proof}

By Theorem~\ref{thm:QUOT+}, $X$ admits a universal 1-covering $\vf:\widetilde{X}\longrightarrow X$ which is a toric
1-covering of non-degenerate toric varieties. Then there exits a Galois 1-covering $f:\widetilde{X}\longrightarrow Y$
such that $\vf=\phi\circ f$\,. In particular this means that there exists a (normal) subgroup $H\leq\Aut(\vf)$ such
that $Y\cong \widetilde{X}/H$ and $\phi$ is the associated quotient projection \cite[Prop.~5.3.8]{Szamuely}. Again
Theorem~\ref{thm:QUOT+} gives that
\begin{equation*}
  \Aut(\vf)\cong\Tors(\Cl(X))\cong\pet(X,x)^{(1)}\cong N/N_1
\end{equation*}
meaning that $H$ corresponds to a sublattice $N_H\leq N$ such that
\begin{equation*}
  N_1\leq N_H\,,\quad H\cong N_H/N_1\cong\Tors(\Cl(Y))\cong\pet(Y,y)^{(1)}
\end{equation*}
for some base point $y\in \phi^{-1}(x)$. Then \cite[Rem.~2.4]{RT-QUOT} shows that there exists an integer matrix
$\eta\in\GL_n(\Q)\cap\mathbf{M}_n(\Z)$ such that $Y$ is the non-degenerate toric variety whose fan matrix is given by
$V^\eta:=\eta\cdot V$ and determined by the following fan
\begin{equation*}
  \Si_\eta:=\{\langle V^\eta_I\rangle\,|\,I\in\I_\Si\}\subseteq\mathfrak{P}(N_H)
\end{equation*}
By construction, $\phi$ is clearly equivariant giving rise to a toric 1-covering.

\section{Application to Mori dream spaces}\label{sez:MDS}

The present section is meant to apply results of sections~\ref{sez:1-covering} and \ref{sez:toric} to
the case of Mori dream spaces. Actually varieties here considered are more general algebraic varieties than Mori dream
spaces as introduced by Hu and Keel in \cite{Hu-Keel}, as we will not require neither any projective embedding nor
completeness when showing main applications. These varieties will be called \emph{weak} Mori dream spaces (wMDS) to
distinguishing them from the usual Hu-Keel Mori dream spaces (MDS) (see Definition~\ref{def:wMDS}).

Next subsections \S~\ref{ssez:Cox} and \S~\ref{ssez:wMDS} will be devoted, the former, to recalling main notation on
Cox rings, essentially following \cite{ADHL}, and the latter, to quickly explain main results about the toric
embedding properties of a wMDS, as studied in \cite{R-wMDS}.

\subsection{Cox sheaf and algebra of an algebraic variety}\label{ssez:Cox}

For what concerns the present topic we will essentially adopt the approach described in the extensive book
\cite{ADHL} and notation introduced in \cite[\S~1.3]{R-wMDS}. The interested reader is referred to those sources for
any further detail.

\subsubsection{Assumption}\label{ipotesi} In the following, $\Cl(X)$ is assumed to be a \emph{finitely generated}
(f.g.) abelian group of rank $r:=\rk(\Cl(X))$. Then $r$ is called either the \emph{rank}
of $X$.
Moreover we will assume that every invertible global function is constant i.e. $H^0(X,\cO_X^*)\cong\K^*$\,.

\subsubsection{Choice}\label{ssez:K} Choose a f.g. subgroup $K\leq\Div(X)$ such that
$$\xymatrix{d_K:=d_X|_K:K\ar@{>>}[r]&\Cl(X)}$$
is an \emph{epimorphism}. Then $K$ is a free group of rank $m\geq r$ and (\ref{Wdivisori}) induces the following exact
sequence of $\Z$-modules
\begin{equation*}
  \xymatrix{0\ar[r]&K_0\ar[r]&K\ar[r]^-{d_K}&\Cl(X)\ar[r]&0}
\end{equation*}
where $K_0:=\Div_0(X)\cap K=\ker(d_K)$.

\begin{definition}[Sheaf of divisorial algebras, Def.~1.3.1.1 in \cite{ADHL}] The \emph{sheaf of divisorial algebras}
associated with the subgroup $K\leq\Div(X)$ is the sheaf of $K$-graded $\cO_X$-algebras
\begin{equation*}
  \cS:=\bigoplus_{D\in K}\cS_D\,,\quad \cS_D:=\cO_X(D)
\end{equation*}
where the multiplication in $\cS$ is defined by multiplying homogeneous sections in the field of functions $\K(X)$.
\end{definition}

\subsubsection{Choice}\label{ssez:chi} Choose a character $\chi:K_0\to\K(X)^*$ such that
\begin{equation*}
  \forall\,D\in K_0\quad D=(\chi(D))
\end{equation*}
where $(f)$ denotes the principal divisor defined by the rational function $f\in\K(X)^*$. Consider the ideal sheaf
$\I_\chi$ locally defined by sections $1-\chi(D)$ i.e.
\begin{equation*}
  \Ga(U,\I_\chi)=\left((1-\chi(D))|_U\,|\,D\in K_0\right)\subseteq\Ga(U,\cS)\,.
\end{equation*}
This induces the following short exact sequence of $\cO_X$-modules
\begin{equation}\label{Sdivisori}
  \xymatrix{0\ar[r]&\I_\chi\ar[r]&\cS\ar[r]^-{\pi_\chi}&\cS/\I_\chi\ar[r]&0}
\end{equation}

\begin{definition}[Cox sheaf and Cox algebra, Construction~1.4.2.1 in \cite{ADHL}]\label{def:CoxRings}
Keeping in mind the exact sequence (\ref{Sdivisori}), the \emph{Cox sheaf} of $X$, associated with $K$ and $\chi$, is
the quotient sheaf $\mathcal{C}ox:=\cS/\I_\chi$ with the $\Cl(X)$-grading
\begin{equation*}
  \cox:=\bigoplus_{\d\in \Cl(X)}\cox_{\d}\,,\quad \cox_{\d}:=\pi_\chi\left(\bigoplus_{D\in
  d_K^{-1}(\d)}\cS_D\right)
\end{equation*}
Passing to global sections, one gets the following \emph{Cox algebra} (usually called Cox \emph{ring}) of $X$,
associated with $K$ and $\chi$,
\begin{equation*}
  \Cox(X):=\cox(X)= \bigoplus_{\d\in \Cl(X)}\Ga(X,\cox_{\d})
\end{equation*}
\end{definition}

\begin{remarks}\label{rems}\hfill
\begin{enumerate}
  \item \cite[Prop.~1.4.2.2]{ADHL} Depending on choices \ref{ssez:K} and \ref{ssez:chi}, both Cox sheaf and algebra
      are not canonically defined. Anyway, given two choices $K,\chi$ and $K',\chi'$ there is a graded isomorphism
      of $\cO_X$-modules $$\cox(K,\chi)\cong\cox(K',\chi')$$
  \item For any open subset $U\subseteq X$, there is a canonical isomorphism
  $$\xymatrix{\Ga(U,\cS)/\Ga(U,\I_\chi)\ar[r]^-{\cong}&\Ga(U,\cox)}$$
  In particular $\Cox(X)\cong H^0(X,\cS)/H^0(X,\I_\chi)$. This fact gives a precise meaning to the usual ambiguous
  writing
  $$\Cox(X)\cong \bigoplus_{[D]\in\Cl(X)}H^0(X,\cO_X(D))$$
\end{enumerate}
\end{remarks}

\subsection{Weak Mori dream spaces (wMDS) and their embedding}\label{ssez:wMDS}

In the lite\-ra\-tu\-re Mori dream spaces (MDS) come with a required projective embedding essentially for their
optimal behavior with respect to the termination of Mori program. As explained in \cite{R-wMDS}, this assumption is
not necessary to obtain main properties of MDS, like e.g. their toric embedding, chamber decomposition of their moving
and pseudo-effective cones and even termination of Mori program, for what this fact could mean for a complete and
non-projective algebraic variety.

According to notation introduced in \cite{R-wMDS}, we set the following

\begin{definition}[wMDS]\label{def:wMDS} A $\Q$-factorial algebraic va\-rie\-ty $X$ sa\-ti\-sfying
assumption~\ref{ipotesi} is called a \emph{weak Mori dream space} (wMDS) if $\Cox(X)$ is a finitely generated
$\K$-algebra. A projective wMDS is called a \emph{Mori dream space} (MDS).
\end{definition}

\subsubsection{Total coordinate and characteristic spaces}

Consider a wMDS $X$ and its Cox sheaf $\cox$. The latter is  \emph{locally of finite type}, that is there
exists a finite affine covering $\bigcup_iU_i=X$ such that $\cox(U_i)$ are finitely generated $\K$-algebras
\cite[Construction~1.3.2.1, Propositions~1.6.1.2, 1.6.1.4]{ADHL}. The \emph{relative spectrum} of $\cox$ \cite[Ex.~II.5.17]{Hartshorne},
      \begin{equation}\label{relspectra}
        \xymatrix{\widehat{X}=\Spec_X(\cox)\ar[r]^-{p_X}& X}
      \end{equation}
        is a normal and quasi-affine variety $\widehat{X}$, coming with an actions of the quasi-torus
        $G:=\Hom(\Cl(X),\K^*)$, whose quotient map is realized  by the canonical morphism $p_X$ in (\ref{relspectra})
        \cite[\S~1.3.2 , Construction~1.6.1.5]{ADHL}. $\widehat{X}$ is called the \emph{characteristic space} of $X$
        and $G$ is called the \emph{characteristic quasi-torus} of $X$.

        Moreover consider
        \begin{equation*}
         \overline{X}:=\Spec(\Cox(X))
        \end{equation*}
which is a normal, affine variety, called the \emph{total coordinate space} of $X$. Then there exists
an open embedding $j_X:\widehat{X}\hookrightarrow\overline{X}$.
       The action of the quasi-torus $G$ extends to $\overline{X}$ in such a way that $j_X$ turns out to be
       equivariant.

\begin{theorem}[Cox Theorem for a wMDS]\label{thm:Cox-wMDS} Let $X$ be a wMDS and consider the natural action of the
quasi-torus $G$ on the total coordinate space $\overline{X}$. Then the loci of stable and semi-stable points coincide
with the open subset $j_X(\widehat{X})\subseteq \overline{X}$, which is the characteristic space of $X$. Then the
canonical morphism $p_X:\widehat{X}\twoheadrightarrow X$ is the associated  $1$-free and geometric quotient. In
particular $$(p_X)_*(\cO_{\widehat{X}})\cong\cox\quad,\quad(p_X)^*:\cO_X\stackrel{\cong}{\longrightarrow}
\cox^G:=(p_X)_*\cO_{\widehat{X}}^G$$
\end{theorem}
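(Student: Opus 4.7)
The plan is to transcribe the classical Cox-theoretic quotient construction \textbf{\`a la} ADHL to the weak (non-projective) setting, observing that the projectivity assumption used by Hu--Keel plays no role once one has finite generation of $\Cox(X)$ and $\Q$-factoriality of $X$. Specifically, I would invoke the structural results of ADHL, Chapter~1 (Constructions~1.6.1.5 and~1.6.3.1, together with Theorem~1.6.4.3), and check that the hypotheses used there follow from Definition~\ref{def:wMDS} and Assumption~\ref{ipotesi}.

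First I would unwind the construction of $\widehat{X}=\Spec_X(\cox)$ and $\overline{X}=\Spec(\Cox(X))$, noting that the $\Cl(X)$-grading of $\cox$ translates into an action of $G=\Hom(\Cl(X),\K^*)$ on $\widehat{X}$ (and on $\overline{X}$) making $p_X$ and $j_X$ equivariant with respect to the trivial action on $X$. Because $p_X$ is affine and $\cox$ is locally of finite type, $p_X$ is a good quotient for the $G$-action in the sense of Mumford; the canonical adjunction $(p_X)_*\cO_{\widehat{X}}\cong\cox$ is tautological, and taking $G$-invariants picks out precisely the $0$-graded component, which by the definition of the Cox sheaf equals $\cO_X$, proving the last two displayed isomorphisms.

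Second, I would argue that $p_X$ is actually geometric and $1$-free, using $\Q$-factoriality of $X$. The point is that $\Q$-factoriality forces the $G$-stabilizer of a point of $\widehat{X}$ to be finite (compare \cite[Cor.~1.6.4.4]{ADHL}), and to be trivial on a big open subset of $\widehat{X}$ — this is exactly the translation of the fact that the torsion of $\Cl(X)$, responsible for non-freeness, concentrates in codimension $\geq 2$. Hence $p_X$ is a geometric quotient, free in codimension $1$, as claimed.

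Third, and this will be the main obstacle, I would identify $j_X(\widehat{X})$ with both the stable and semi-stable loci of the $G$-action on $\overline{X}$. In the absence of a projective embedding there is no obvious ample linearization to feed into classical GIT; instead one linearizes against the trivial bundle and takes the semi-stable locus with respect to the \emph{irrelevant ideal} $J_{\mathrm{irr}}\subseteq\Cox(X)$, defined locally by the sections that do not vanish on $\widehat{X}$ \`a la \cite[\S~1.6.3]{ADHL}. The inclusion $j_X(\widehat{X})\subseteq\overline{X}^{ss}$ follows directly from this definition, while the reverse inclusion and the coincidence of stable with semi-stable points require showing that on $\overline{X}\setminus j_X(\widehat{X})$ every orbit closure meets the vanishing locus of $J_{\mathrm{irr}}$, whereas on $j_X(\widehat{X})$ orbit closures are closed (geometricity) and stabilizers are finite (by the previous step). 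This is a local-affine statement on $\overline{X}$ that does not require properness of $X$; patching it along an affine open cover $\{U_i\}$ of $X$ as in \ref{ssez:wMDS} reduces it to the MDS case treated in ADHL, concluding the proof.
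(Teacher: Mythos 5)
The paper does not actually prove this statement: immediately after the theorem it defers entirely to \cite{R-wMDS} (Definitions~2.3--2.5 and Theorem~2.6 there) for both the relevant notions of (semi)stability in the non-projective setting and the proof, so your proposal is being compared against a citation rather than an argument. That said, your reconstruction follows the route one actually finds in \cite{R-wMDS}/\cite{ADHL} and its three steps are sound: the adjunction $(p_X)_*\cO_{\widehat{X}}\cong\cox$ and the identification $\cox^G=\cox_0\cong\cO_X$ are indeed tautological from the relative-spectrum construction; the isotropy group at a point of $\widehat{X}$ over $x\in X$ is dual to the local class group $\Cl(X,x)$, so $\Q$-factoriality gives finite stabilizers everywhere and normality gives trivial stabilizers over the big open $X_{\mathrm{reg}}$, whence all orbits have dimension $\dim G$, are therefore closed, and the good quotient is geometric and $1$-free. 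Two points deserve sharpening. First, your closing reduction ``patching along an affine cover reduces it to the MDS case treated in ADHL'' is unnecessary and slightly misdirected: the characteristic-space machinery of \cite[Ch.~1]{ADHL} (Constructions~1.6.1.5, 1.6.3.1 and Theorem~1.6.1.6) is already formulated for an arbitrary normal variety with finitely generated class group, $\Gamma(X,\cO_X^*)\cong\K^*$ and locally finitely generated Cox sheaf; projectivity in Hu--Keel enters only for the variation-of-GIT/Mori-chamber statements, not for the quotient presentation, so no reduction to the projective case is needed. Second, the assertion that stable and semistable loci coincide with $j_X(\widehat{X})$ is, with the definitions of \cite{R-wMDS} (semistability read off the irrelevant ideal $\irr(X)$, exactly as you guess, cf.\ Proposition~\ref{prop:irr}), close to definitional on the semistable side; the genuine content is that every semistable point is stable, which follows from the closedness of orbits and finiteness of stabilizers you establish in your second step --- so you should make explicit that the logical order is good quotient $\Rightarrow$ geometric (via $\Q$-factoriality) $\Rightarrow$ stable $=$ semistable, to avoid the apparent circularity of invoking geometricity inside step three.
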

For a definition of used notation and a sketch of proof we refer the interested reader to Definitions~8,9,10 and
Theorem~1 in \cite{R-wMDS}.

\subsubsection{Irrelevant loci and ideals}\label{ssez:Irr}
$\Cox(X)$ is a finitely generated $\K$-algebra. Then, up to the choice of a set of generators
$\mathfrak{X}=(x_1,\ldots,x_m)$, we get
       \begin{equation*}
         \Cox(X)\cong\K[\X]/I
       \end{equation*}
       being $I\subseteq\K[\mathfrak{X}]:=\K[x_1,\ldots,x_m]$ a suitable ideal of relations.

       \noindent Calling $\overline{W}:=\Spec\K[\X]\cong\K^m$, the canonical surjection
       \begin{equation}\label{can-surj}
         \xymatrix{\pi_\X:\K[\X]\ar@{->>}[r]&\Cox(X)}
       \end{equation}
       gives rise to a closed embedding $\overline{i}:\overline{X}\hookrightarrow\overline{W}\cong\K^m$, depending on
       the choice of $(K,\chi,\X)$.

\begin{definition}[Irrelevant loci and ideals]\label{def:irr}
  Let $X$ be a wMDS. The \emph{irrelevant locus} of a total coordinate space $\overline{X}$ of $X$ is the Zariski
  closed subset given by the complement $B_X:=\overline{X}\setminus j_X(\widehat{X})$. Since $\overline{X}$ is affine,
  the irrelevant locus $B_X$ defines an \emph{irrelevant ideal} of the Cox algebra $\Cox(X)$, as
  \begin{equation*}
    \mathcal{I}rr(X):=\left(f\in\Cox(X)_\d\,|\, \d\in\Cl(X)\ \text{and}\ f|_{B_X}=0 \right)\subseteq \Cox(X)\,.
  \end{equation*}
  Analogously, after the choice of a set $\X$ of generators of $\Cox(X)$, consider the \emph{lifted irrelevant ideal}
  of $X$
  \begin{equation*}
    \widetilde{\I rr}:=\pi_\X^{-1}(\I rr(X))\subseteq\K[\X]\,.
  \end{equation*}
  The associated zero-locus $\widetilde{B}=\mathcal{V}(\widetilde{\I rr})\subseteq \Spec(\K[\X])=:\overline{W}$ will
  be called the \emph{lifted irrelevant locus} of $X$.
\end{definition}

\subsubsection{The canonical toric embedding}

Let $X$ be a wMDS and $\Cox(X)$ be its Cox ring. Recall that the latter is a graded $\K$-algebra over the class group
$\Cl(X)$ of $X$. Given a set of generators $\X=\{x_1,\ldots,x_m\}$ of $\Cox(X)$ one can always ask, up to
factorization, that their classes $\overline{x}_i$ are \emph{$\Cl(X)$-prime}, in the sense of
\cite[Def.~1.5.3.1]{ADHL}, that is:
\begin{itemize}
  \item a non-zero non-unit
$y\in\Cox(X)$ is $\Cl(X)$-prime if there exists $\d\in\Cl(X)$ such that $y\in\Cox(X)_\d$ (i.e. $y$ is
\emph{homogeneous}) and, for $i=1,2$,
  \begin{equation*}
    \forall\,\d_i\in\Cl(X)\,,\,\forall\,f_i\in\Cox(X)_{\d_i}\quad y\,|\,f_1f_2\ \Longrightarrow\ y\,|\,f_1\
    \text{or}\ y\,|\,f_2
  \end{equation*}
\end{itemize}

\begin{definition}[Cox generators and bases]\label{def:Coxgen}
  Given a wMDS $X$ and a set $\X$ of generators of $\Cox(X)$, an element $x\in\X$  is called a \emph{Cox generator} if
  its class $\overline{x}$ is $\Cl(X)$-prime. If $\X$ is entirely composed by Cox generators then it is called a
  \emph{Cox basis} of $\Cox(X)$ if it has \emph{minimum cardinality}.
\end{definition}

\begin{theorem}[Canonical toric embedding]\label{thm:canonic-emb}
Let $X$ be a wMDS and $\X$ be a Cox basis of $\Cox(X)$. Then there exists a closed embedding $i:X\hookrightarrow W$
into a $\Q$-factorial and non-degenerate toric variety $W$, fitting into the following commutative diagram
\begin{equation}\label{embediag}
    \xymatrix{\overline{X}\ar@/^2pc/[rrr]^-{\overline{i}}&\widehat{X}\ar@{_{(}->}[l]_-{j_X}\ar@{>>}[d]^-{p_X}
    \ar@{^(->}[r]^-{\widehat{i}}
                &\widehat{W}\ar@{^(->}[r]^-{j_W}\ar@{>>}[d]^-{p_W}&\overline{W}\\
                &{X}\ar@{^(->}[r]^-{i}&{W}&}
  \end{equation}
where
\begin{enumerate}
  \item $\overline{W}=\Spec\K[\X]$,
  \item $\widehat{W}:=\overline{W}\backslash\widetilde{B}$ is a Zariski open subset and
      $j_W:\widehat{W}\hookrightarrow\overline{W}$ is the associated open embedding,
  \item $\widehat{i}:=\overline{i}|_{\widehat{X}}$\,,
  \item $p_W:\widehat{W}\twoheadrightarrow W$ is a 1-free geometric quotient by an action of the characteristic
      quasi-torus $G=\Hom(\Cl(X),\K^*)$ on the affine va\-rie\-ty $\overline{W}$, \wrt $\widehat{i}$ turns out to be
      equivariant and $j_W(\widehat{W})$ is the locus of stable and semi-stable points. Moreover
      $(p_W)^*:\cO_W\stackrel{\cong}{\longrightarrow} (p_W)_*\cO_{\widehat{W}}^G$\,.
\end{enumerate}
\end{theorem}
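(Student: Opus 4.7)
The plan is to construct the ambient toric variety $W$ as the geometric quotient of $\widehat{W}=\overline{W}\setminus\widetilde{B}$ by the characteristic quasi-torus $G=\Hom(\Cl(X),\K^*)$, where the $G$-action on $\overline{W}=\Spec\K[\X]$ is encoded by the $\Cl(X)$-grading $\deg x_i:=\overline{x}_i$ read off from the Cox basis $\X$. With this grading, the presentation $\pi_\X:\K[\X]\twoheadrightarrow\Cox(X)$ in (\ref{can-surj}) is a homomorphism of $\Cl(X)$-graded algebras, so the induced closed embedding $\overline{i}:\overline{X}\hookrightarrow\overline{W}$ is automatically $G$-equivariant. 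The theorem then amounts to the assertion that this recipe really produces a non-degenerate, $\Q$-factorial toric variety $W$ and that $\overline{i}$ descends to a closed embedding $i:X\hookrightarrow W$ fitting into (\ref{embediag}).

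First I would produce the fan of $W$ via Gale duality. Choose a splitting $\Cl(X)\cong\Z^r\oplus\Tors(\Cl(X))$, let $Q\in\mathbf{M}(r,m;\Z)$ be the matrix whose $i$-th column lifts $\overline{x}_i$, and set $V:=\G(Q)$. The key verification is that $Q$ is a $W$-matrix in the sense of Definition~\ref{def:Wmatrice}: conditions (a)--(c) follow from $\Cl(X)$ being generated by the $\overline{x}_i$ together with assumption~\ref{ipotesi} and the existence of an effective cone in $\Cox(X)$, while conditions (d)--(f) translate the fact that $\X$ is a Cox \emph{basis}, i.e.\ a minimal system of $\Cl(X)$-prime homogeneous generators. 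Then $V$ is an $F$-matrix by \cite[Thm.~3.15]{RT-LA&GD} and one chooses the fan $\Sigma_W\in\SF(V)$ whose maximal cones $\langle V_I\rangle$ correspond to the minimal monomial primes of $\widetilde{\irr}$; this produces a simplicial (hence $\Q$-factorial) non-degenerate toric variety $W$. The classical Cox construction for toric varieties (\cite[\S~1.6.3--4]{ADHL}) identifies $\overline{W}$ and $\widehat{W}=\overline{W}\setminus\widetilde{B}$ with the total coordinate space and the characteristic space of $W$, giving items (1), (2), (4) of the statement, and in particular $p_W:\widehat{W}\twoheadrightarrow W$ is the $1$-free geometric quotient for the action induced by the $\Cl(X)$-grading.

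Next I would construct the embedding $i$. By Proposition~\ref{prop:irr}(3) one has $\overline{i}(B_X)=\widetilde{B}$, so $\overline{i}$ restricts to a $G$-equivariant closed embedding $\widehat{i}:\widehat{X}\hookrightarrow\widehat{W}$, which gives item (3). Since $p_X$ and $p_W$ are both geometric $G$-quotients and $\widehat{i}$ is equivariant, $p_W\circ\widehat{i}$ is $G$-invariant and hence factors through $p_X$, yielding a unique morphism $i:X\to W$ making (\ref{embediag}) commute. To see that $i$ is a closed embedding I would work locally: over an affine chart $U\subseteq W$ one has $\cO_W(U)\cong\Ga(p_W^{-1}(U),\cO_{\widehat{W}})^G$ and similarly $\cO_X(U\cap X)\cong\Ga(p_X^{-1}(U\cap X),\cox)^G$ by Theorem~\ref{thm:Cox-wMDS}, and the surjectivity of $\pi_\X$ on graded pieces passes to $G$-invariants, producing a surjection $i^\#:\cO_W\twoheadrightarrow i_*\cO_X$; properness of $i$ is inherited from the properness of $\overline{i}$ via the two geometric quotients.

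The principal obstacle I expect is the verification of conditions (d)--(f) of Definition~\ref{def:Wmatrice} for the matrix $Q$ built from the Cox basis $\X$. Condition (e) (no row of $Q$ equal to a standard coordinate vector) must be extracted from the $\Cl(X)$-primeness of the $\overline{x}_i$'s, since a syzygy of the form $\overline{x}_i=0$ in $\Cl(X)$ would force $x_i$ to be a unit, contradicting assumption~\ref{ipotesi}; condition (f) excludes a syzygy $a\,\overline{x}_i+b\,\overline{x}_j=0$ with $ab<0$, which, together with the \emph{minimality} of the Cox basis, would yield a nontrivial factorization contradicting $\Cl(X)$-primeness. Once $Q$ is seen to be a $W$-matrix, non-degeneracy of $W$ is condition (b) combined with Remark~\ref{rem:fattori torici}, and the rest of diagram (\ref{embediag}) is a formal consequence of $G$-equivariance together with Theorem~\ref{thm:Cox-wMDS}.
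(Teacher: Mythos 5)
Your overall strategy coincides with the one the paper relies on (the paper gives no proof here, deferring to \cite[Thm.~2.10, Cor.~2.15]{R-wMDS} and only recalling the explicit formulas for $\overline{i}$ and for the $G$-action, both of which you reproduce correctly): endow $\K[\X]$ with the $\Cl(X)$-grading coming from the Cox basis, identify $\widehat{W}=\overline{W}\setminus\widetilde{B}$ with the characteristic space of a toric variety via Proposition~\ref{prop:irr}(3), and descend the graded surjection $\pi_\X$ through the two geometric quotients. The descent part of your argument (equivariance of $\widehat{i}$, surjectivity of $\pi_\X$ on degree-zero pieces of homogeneous localizations, closedness of $\overline{i}(\overline{X})\cap\widehat{W}$) is sound.

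The gap is in the step where you actually produce the toric variety $W$. First, by splitting $\Cl(X)$ and Gale-dualizing only the free part $Q$ of the degree map you land in the wrong lattice: $\G(Q)$ is cotorsion-free, hence it is a fan matrix of the \emph{universal $1$-covering} $\widetilde{W}$ rather than of $W$; a fan matrix of $W$ must also annihilate the torsion part of the grading (compare Example~\ref{ssez:HK}, where $\G(Q)=\widetilde{V}\neq V$ and the fan matrix $V$ of $W$ is singled out by $\mathcal{T}\cdot V^{T}=\mathbf{0}$ --- this distinction is exactly the content of Theorems~\ref{thm:QUOT+} and \ref{thm:notors1-cov}, so it cannot be blurred at this stage). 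Second, the fan of $W$ cannot be chosen in $\SF(V)$: every $\Si\in\SF(V)$ has support $|\Si|=\langle V\rangle$, which (when the effective cone is pointed and full-dimensional, so that $V$ is $F$-complete) would force $W$ to be complete, whereas the canonical ambient toric variety of a wMDS is in general not complete and need not even admit a sharp completion (see Definition~\ref{def:fillable} and \cite[Ex.~2.40]{R-wMDS}). For the same reason, conditions (c), (e), (f) of Definition~\ref{def:Wmatrice} are completeness-flavoured constraints that your appeals to $\Cl(X)$-primeness and minimality of $\X$ do not actually establish and that can fail for a wMDS. The fan has to be read off directly from the monomial generators of $\widetilde{\irr}$ (equivalently, from the bunch of cones of \cite{ADHL}), namely as the collection of cones $\langle V_I\rangle$ Gale-dual to the relevant faces of the positive orthant, with no requirement on its support; simpliciality of these cones (equivalently, $\Q$-factoriality of $W$ and geometricity of $p_W$) must then be deduced from the $\Q$-factoriality of $X$ via Theorem~\ref{thm:Cox-wMDS}, and non-degeneracy follows since $V$ has full rank.
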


For a proof of this theorem we refer the interested reader to \cite[Thm.~2, Cor.~1]{R-wMDS}. Here we just recall
that, given the Cox basis $\X=\{x_1,\ldots,x_n\}$, the embedding, canonically determined by the surjection
(\ref{can-surj}) between the associated algebras, can be concretely described by evaluating the Cox generators as
follows
 $$\xymatrix{x\in \overline{X}\ar@{|->}[r]&\overline{i}(x):=(x_1(x),\ldots,x_m(x))\in \K^m}$$
Moreover the $G$-action on $\overline{W}$ is defined by observing that the class $\overline{x}_i$ is homogeneous, that
is there exists a class $\d_i\in \Cl(X)$ such that $\overline{x}_i\in\Cox(X)_{\d_i}$. Then one has
$$\xymatrix{(g,z)\in G\times\overline{W}\ar@{|->}[r]&g\cdot z:=(\chi_1(g)z_1,\ldots,\chi_m(g)z_m)\in\overline{W}}$$
where $\chi_i:G\to\K^*$ is the character defined by
$\chi_i(g)=g(\d_i)$\,.

\begin{remarks}
  \begin{enumerate}
  \item The ambient toric variety $W$, defined in Theorem~\ref{thm:canonic-emb}, only depends on the choices of the
      Cox basis $\X$ and no more on $K$ and $\chi$, as given in \ref{ssez:K} and \ref{ssez:chi}. In fact, for
      different choices $K',\chi'$ we get an isomorphic Cox ring, as observed in Remark~\ref{rems}~(1). Then it
      still admits the same presentation $\K[\X]/I$, meaning that the toric embedding $i:X\hookrightarrow W$ remains
      unchanged, up to isomorphism.

      \noindent Actually the toric embedding exhibited in Theorem~\ref{thm:canonic-emb} only depends on the
      cardinality $|\X|$, that is on the choice of a Cox basis instead of a more general set of Cox generators. One can then fix a \emph{canonical toric embedding} $i:X\hookrightarrow W$ as that
      associated, up to isomorphisms, to a Cox basis, that is the one presented in Theorem~\ref{thm:canonic-emb}.
  \item Varieties $\widehat{W}$ and $\overline{W}$, exhibited in Theorem~\ref{thm:canonic-emb}, are called the
      \emph{characteristic space} and the \emph{total coordinate space}, respectively, of the canonical toric
      ambient variety $W$. In particular, the geometric quotient $p_W:\widehat{W}\twoheadrightarrow W$ is precisely
      the classical Cox's quotient presentation of a \emph{non-degenerate} (i.e. not admitting torus factors) toric
      variety \cite{Cox}.
\end{enumerate}
\end{remarks}

\subsubsection{The canonical toric embedding is a neat embedding}
Let $X$ be a wMDS and $i:X\hookrightarrow W$ be its canonical toric embedding constructed in
Theorem~\ref{thm:canonic-emb}. Let $V=(\v_1,\ldots,\v_m)$ be a fan matrix of $W$, which is a representative matrix of
the dual morphism
\begin{equation*}
  \xymatrix{\Hom(\Div_\T(W),\Z)\ar[r]^-{div_W^\vee}_-V&N:=\Hom(M,\Z)}
\end{equation*}
In the following we will then denote
$D_i:=D_{\langle\v_i\rangle}$ the prime torus invariant associated with the ray $\langle\v_i\rangle\in\Si(1)$, for
every $1\leq i\leq m$.

\begin{proposition}[Pulling back divisor classes]\label{prop:pullback} Let $i:X\hookrightarrow W$ be a closed
embedding of a normal algebraic variety X into a toric variety $W(\Si)$ with acting torus $\T$. Let
$D_{\rho}=\overline{\T\cdot x_{\rho}}$, for $\rho\in\Si(1)$, be the invariant prime divisors of $W$ and assume that
$\{i^{-1}(D_\rho)\}_{\rho\in\Si(1)}$ is a set of pairwise distinct hypersurfaces in $X$. Then it is well
defined a pull back homomorphism $i^*:\Cl(W)\to\Cl(X)$.
\end{proposition}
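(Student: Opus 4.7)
The plan is to exploit the standard presentation of the class group of a toric variety by torus-invariant divisors, namely the exact sequence
\begin{equation*}
M \xrightarrow{\operatorname{div}_W} \Div_{\T}(W) \xrightarrow{d_W} \Cl(W) \to 0 \,,\qquad \operatorname{div}_W(m)=\sum_{\rho\in\Sigma(1)}\langle m,v_\rho\rangle D_\rho,
\end{equation*}
(see e.g.\ \cite[Thm.~4.1.3]{CLS}). I would first construct a homomorphism $i^{\#}:\Div_{\T}(W)\to\Div(X)$ on generators, by sending $D_\rho\mapsto i^{-1}(D_\rho)$ and extending $\Z$-linearly, and then show that $i^{\#}$ descends to the quotient $\Cl(W)$, thereby defining the desired $i^{*}$.

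The fact that $i^{\#}$ is well defined is direct from the hypothesis: each $i^{-1}(D_\rho)$ is an irreducible codimension-one subvariety of $X$, hence a prime Weil divisor, and the pairwise distinctness ensures that the assignment $D_\rho\mapsto i^{-1}(D_\rho)$ extends $\Z$-linearly to a group homomorphism into the free abelian group $\Div(X)$. To show that $i^{\#}$ factors through $d_W$, I would verify that
\begin{equation*}
i^{\#}\bigl(\operatorname{div}_W(\chi^m)\bigr)\;=\;\operatorname{div}_X\bigl(i^{*}\chi^m\bigr)\qquad\text{in }\Div(X),
\end{equation*}
for every $m\in M$. Note that $i^{*}\chi^m\in\K(X)^{*}$ is well defined and non-zero: since each $i^{-1}(D_\rho)$ is a proper (codimension-one) subset of $X$, the irreducible variety $X$ is not contained in any single $D_\rho$, hence, being irreducible, $X\not\subseteq \bigcup_\rho D_\rho=W\setminus\T$, so $X\cap\T\neq\emptyset$ and $\chi^m\circ i$ restricts to a nontrivial rational function on $X$. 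Both sides of the claimed identity are Weil divisors supported on $\bigcup_\rho i^{-1}(D_\rho)$, so it suffices to compare multiplicities along each prime divisor $i^{-1}(D_{\rho_0})$.

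The hard part will be this local multiplicity computation. The idea is to pick a general point $p\in i^{-1}(D_{\rho_0})$ avoiding the singular loci and the other pullback divisors, i.e.\ with $p\in X_{\text{reg}}$, $p\notin i^{-1}(D_{\rho})$ for $\rho\neq\rho_0$, and $i(p)$ in the smooth affine toric chart $U_{\rho_0}=\Spec\K[\sigma_{\rho_0}^{\vee}\cap M]\cong \K\times(\K^{*})^{n-1}$ associated to the ray $\rho_0$. On this chart one has a local coordinate $t$ cutting out $D_{\rho_0}$, and the character admits the standard factorization $\chi^m=t^{\langle m,v_{\rho_0}\rangle}\,g$ with $g\in\cO_{W,i(p)}^{*}$. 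Pulling back via $i$ gives $i^{*}\chi^m=(i^{*}t)^{\langle m,v_{\rho_0}\rangle}\cdot i^{*}g$, where $i^{*}g$ is a unit at $p$. Because $i^{-1}(D_{\rho_0})$ is assumed to be an irreducible hypersurface, i.e.\ a reduced prime divisor on $X$ cut out locally by $i^{*}t$, the order of vanishing $\operatorname{ord}_{i^{-1}(D_{\rho_0})}(i^{*}t)$ equals $1$. Hence the multiplicity of $\operatorname{div}_X(i^{*}\chi^m)$ along $i^{-1}(D_{\rho_0})$ equals $\langle m,v_{\rho_0}\rangle$, matching the corresponding coefficient of $i^{\#}(\operatorname{div}_W\chi^m)$.

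The identity $i^{\#}(\operatorname{div}_W\chi^m)=\operatorname{div}_X(i^{*}\chi^m)\in\Div_0(X)$ then implies $i^{\#}$ sends $\operatorname{div}_W(M)$ into $\Div_0(X)$, so it induces a well-defined homomorphism $i^{*}:\Cl(W)\to\Cl(X)$, completing the proof. The delicate point, as indicated, is ensuring that the reducedness built into the phrase ``irreducible hypersurface'' in the hypothesis really guarantees multiplicity one in the local pullback, so that the Cartier pullback of the principal divisor $\operatorname{div}_W(\chi^m)$ coincides with the naive Weil pullback $i^{\#}(\operatorname{div}_W\chi^m)$ and no extra multiplicity corrections appear.
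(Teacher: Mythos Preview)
The paper does not actually give a self-contained proof of this proposition; it simply refers the reader to \cite[Prop.~2.12]{R-wMDS}. Your argument is the natural one and is correct: use the standard presentation $M\to\Div_\T(W)\to\Cl(W)\to 0$, define $i^\#$ on generators by $D_\rho\mapsto i^{-1}(D_\rho)$, and check that $i^\#(\operatorname{div}_W\chi^m)=\operatorname{div}_X(i^*\chi^m)$ so that $i^\#$ descends to class groups.

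You are also right to flag the one genuinely delicate point, namely that the hypothesis ``$i^{-1}(D_\rho)$ is an irreducible hypersurface'' must be read as saying the scheme-theoretic preimage is a reduced prime divisor, so that $\operatorname{ord}_{i^{-1}(D_{\rho_0})}(i^*t)=1$ in your local computation. Without this, the set-theoretic pullback $i^\#$ and the Cartier pullback of $\operatorname{div}_W(\chi^m)$ could differ by multiplicities $n_\rho$, and $\sum_\rho\langle m,v_\rho\rangle\,i^{-1}(D_\rho)$ need not be principal even though $\sum_\rho n_\rho\langle m,v_\rho\rangle\,i^{-1}(D_\rho)$ is. This is indeed the intended reading of the hypothesis (and is how it is used later in the paper, e.g.\ in the proof of Theorem~\ref{thm:notors1-cov}), so your argument goes through.
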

For a proof, the interested reader is referred to \cite[Remark~3.2.5.1]{ADHL} and \cite[Prop.~4]{R-wMDS}.

\begin{definition}[Neat embedding]\label{def:neat}
Let $X$ be a normal algebraic variety and $W(\Si)$ be a toric variety. Let
$\{D_{\rho}\}_{\rho\in\Si(1)}$ be the torus invariant prime divisors of $W$. A closed embedding $i:X\hookrightarrow W$
is called a \emph{neat (toric) embedding} if
  \begin{itemize}
    \item[(i)] $\{i^{-1}(D_\rho)\}_{\rho\in\Si(1)}$ is a set of pairwise distinct  hypersurfaces in $X$,
    \item[(ii)] the pullback homomorphism defined in Proposition~\ref{prop:pullback},
        $$\xymatrix{i^*:\Cl(W)\ar[r]^-{\cong}&\Cl(X)}$$
        is an isomorphism.
  \end{itemize}
\end{definition}

\begin{proposition}\label{prop:neat}
  The canonical toric embedding $i:X\hookrightarrow W$, of a wMDS $X$, is a neat embedding. Moreover the isomorphism
  $i^*:\Cl(W)\stackrel{\cong}{\longrightarrow}\Cl(X)$ restricts to give an isomorphism $\Pic(W)\cong\Pic(X)$.
\end{proposition}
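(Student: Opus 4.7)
The plan is to verify conditions (i) and (ii) of Definition~\ref{def:neat} separately, exploiting the explicit description of the canonical embedding provided by Theorem~\ref{thm:canonic-emb}, and then upgrade the class group isomorphism to an isomorphism of Picard groups via the compatibility of the two Cox presentations.

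\emph{Verification of (i).} Recall that the closed embedding $\overline{i}:\overline{X}\hookrightarrow\overline{W}=\Spec\K[\X]$ is induced by the surjection $\pi_\X:\K[\X]\twoheadrightarrow\Cox(X)$ sending $z_i\mapsto x_i$. Consequently the coordinate hyperplane $\{z_i=0\}\subset\overline{W}$, which cuts out the torus invariant divisor $D_i$ on $W$ after descent via $p_W$, pulls back along $\overline{i}$ to $\{x_i=0\}\subset\overline{X}$; descending via $p_X$ gives $i^{-1}(D_i)$. Because $\overline{x}_i$ is $\Cl(X)$-prime (Cox generator), the principal ideal $(x_i)\subset\Cox(X)$ is prime in the $\Cl(X)$-graded sense, so $\{x_i=0\}$ is an irreducible hypersurface of $\overline{X}$ which descends to an irreducible hypersurface $i^{-1}(D_i)$ of $X$. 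For pairwise distinctness, if $i^{-1}(D_i)=i^{-1}(D_j)$ for some $i\neq j$, then the two $\Cl(X)$-prime Cox generators $x_i,x_j$ would cut out the same prime divisor on $X$, hence would differ by a unit after localization; this contradicts the minimality of the Cox basis $\X$ (one of the two could be removed), proving distinctness.

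\emph{Verification of (ii).} By Theorem~\ref{thm:canonic-emb}, $W$ is realized as the geometric quotient $p_W:\widehat{W}\twoheadrightarrow W$ of $\widehat{W}\subseteq\overline{W}=\K^m$ by the action of the characteristic quasi-torus $G=\Hom(\Cl(X),\K^*)$ via the characters $\chi_i(g)=g(\d_i)$. This is precisely the classical Cox presentation of a non-degenerate toric variety with weight data given by the classes $\d_i$; in particular, applying Cox's construction in reverse, one recovers
\[
\Cl(W)\cong\Hom(G,\K^*)\cong\Cl(X),
\]
the second isomorphism being Pontryagin duality for the finitely generated abelian group $\Cl(X)$. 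Under this natural identification the class $[D_i]\in\Cl(W)$ corresponds to $\d_i=[\overline{x}_i]\in\Cl(X)$. By step (i) and Proposition~\ref{prop:pullback}, the pullback homomorphism $i^*:\Cl(W)\to\Cl(X)$ sends $[D_i]\mapsto[i^{-1}(D_i)]=\d_i$, so $i^*$ coincides with the natural isomorphism, proving (ii).

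\emph{Restriction to Picard groups.} The inclusion $i^*(\Pic(W))\subseteq\Pic(X)$ holds automatically, since pullback along a morphism of normal irreducible varieties sends Cartier divisors to Cartier divisors. For the converse inclusion, the key observation is that Cartierness on both sides is detected via the common Cox data: a class $\d\in\Cl(W)\cong\Cl(X)$ is Cartier on $W$ (resp.\ on $X$) if and only if the $\d$-graded component of $\Cox(W)=\K[\X]$ (resp.\ of $\Cox(X)=\K[\X]/I$) is locally free of rank one on the characteristic space $\widehat{W}$ (resp.\ $\widehat{X}$) in a $G$-equivariant way. The $G$-equivariant closed embedding $\widehat{i}:\widehat{X}\hookrightarrow\widehat{W}$ identifies these $G$-linearised Cartier data: a local $G$-semi-invariant trivialisation of $\cox_\d^X$ over $p_X^{-1}(U)$ lifts, via $\pi_\X$, to a local $G$-semi-invariant trivialisation of $\cox_\d^W$ over a $G$-invariant neighbourhood of $\widehat{i}(p_X^{-1}(U))$ inside $\widehat{W}$, and this neighbourhood can be enlarged to cover $W$ using the toric structure (standard for the ambient non-degenerate toric variety). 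Hence $i^{*-1}(\Pic(X))\subseteq\Pic(W)$, and the class group isomorphism restricts to $\Pic(W)\stackrel{\cong}{\longrightarrow}\Pic(X)$. The main technical obstacle is this last Cartier-descent step, where one has to transfer local $G$-equivariant trivialisations from $X$ to the ambient $W$; this is where the neatness assumption together with the $\Q$-factoriality and Cox finite-generation play a decisive role.
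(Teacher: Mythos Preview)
The paper does not actually prove this proposition; it defers entirely to \cite[Prop.~2.14]{R-wMDS}. So there is no in-paper argument to compare against, and your attempt must be judged on its own merits.

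Your verification of (i) and (ii) is essentially the natural one and is correct in outline: irreducibility of $i^{-1}(D_j)$ follows from $\Cl(X)$-primeness of the Cox generator $x_j$, pairwise distinctness from minimality of the Cox basis, and the identification $\Cl(W)\cong\Cl(X)$ is immediate from the fact that $W$ is constructed with grading group $\Cl(X)$ and that $i^*[D_j]=\d_j$. This part is fine, if a bit informal in the distinctness step.

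The Picard group argument, however, has a real gap. You correctly observe $i^*(\Pic(W))\subseteq\Pic(X)$, but the reverse inclusion is not established. Your proposed mechanism, lifting a $G$-semi-invariant local trivialisation from $\widehat{X}$ to a $G$-invariant neighbourhood in $\widehat{W}$ and then ``enlarging to cover $W$ using the toric structure'', is not justified and is not obviously true: a local section over a tube around $\widehat{i}(\widehat{X})$ has no reason to extend to all of $\widehat{W}$. The cleaner way to see the equality is via stabilisers: for the diagonal $G$-action, the isotropy group of $w\in\widehat{W}$ depends only on the set $S(w)=\{j:w_j\neq 0\}$, and $\d\in\Pic(W)$ (resp.\ $\Pic(X)$) iff $\chi_\d$ is trivial on every such isotropy occurring in $\widehat{W}$ (resp.\ $\widehat{X}$). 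The crucial input, which you do not invoke, is Proposition~\ref{prop:irr}(3): $\overline{i}(B_X)=\widetilde{B}$, which forces the same collection of strata (hence the same isotropy subgroups) to appear in $\widehat{X}$ as in $\widehat{W}$. Without this identification of irrelevant loci the two Picard conditions cannot be matched, and your ``enlarging'' step is where this missing ingredient should enter.
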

For a proof, the interested reader is referred to \cite[Cor.~3.3.1.7]{ADHL} and \cite[Prop.~5]{R-wMDS}.

\subsubsection{Sharp completions of the canonical ambient toric variety}
E\-ve\-ry algebraic variety can be embedded in a complete one, by Nagata's theorem \cite[Thm.]{Nagata}. For  those
endowed with an algebraic group action Sumihiro provided an equivariant version of this theorem \cite{Sumihiro1},
\cite{Sumihiro2}. In particular, for toric varieties, it corresponds to the Ewald-Ishida  combinatorial completion
procedure for fans \cite[Thm.~III.2.8]{ES}, recently simplified by Rohrer \cite{Rohrer11}. Anyway, all these
procedures in general require the adjunction of some new ray into the fan under completion, that is an increasing of
the rank of $X$. For a toric variety of dimension $\le 3$, whose fan has positive hull filling the whole $N_\R$, a completion which does not increase the number of rays can be found. The latter does no more hold in dimension $\geq 4$: there are examples of 4-dimensional fans of this kind, which cannot be
completed without the introduction of new rays. Consider the Remark ending up \S~III.3 in \cite{Ewald96} and
references therein, for a discussion of this topic; for explicit examples consider \cite[Ex.~3]{RT-Pic} and the
canonical ambient toric variety presented in \cite[Ex.~3]{R-wMDS}.

In the following, a completion not increasing the rank will be called \emph{sharp}. Although  a sharp
completion of a toric variety does not exist in general,  Hu and Keel showed that the canonical ambient toric variety
$W$, of a MDS $X$, always admits sharp completions, which are even projective, one for each Mori chamber contained in
$\Nef(W)\cong\Nef(X)$ \cite[Prop.~2.11]{Hu-Keel}. Unfortunately this is no more the case for a general wMDS: a
counterexample exhibiting a wMDS whose canonical ambient toric variety does not admit any sharp completion is given in
\cite[Ex.~3]{R-wMDS}.

Theorem~3 in \cite{R-wMDS} characterizes those weak Mori dream spaces $X$ whose ca\-no\-ni\-cal ambient toric
variety $W$ admits a sharp completion $Z$, as those admitting a \emph{filling cell} inside the nef cone $\Nef(X)$: a
filling cell is a cone of the secondary fan of $X$ arising as the common intersection of all the cones of a saturated
bunch of cones containing the bunch of cones associated with $W$ and giving rise to the nef cone of a complete toric
variety \cite[Def.~16]{R-wMDS}.

\begin{definition}[Fillable wMDS]\label{def:fillable} A wMDS $X$ is called \emph{fillable} if $\Nef(X)$ contains a
filling cell $\g$.
\end{definition}

\begin{theorem}[see Thm.~3 in \cite{R-wMDS}]\label{thm:fillable}
  A wMDS $X$ with canonical ambient toric variety $W$ is fillable if and only if there exists a sharp completion
  $W\hookrightarrow Z$. In particular, if $X$ is complete then the induced closed embedding $X\hookrightarrow Z$ is
  neat.
\end{theorem}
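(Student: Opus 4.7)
The plan is to translate both sides of the equivalence into the language of bunches of cones via Gale duality and then match them via a definition-unwinding argument. Recall that the canonical ambient toric variety $W$ of $X$ is described, on the Gale-dual side, by a bunch of cones $\mathcal{B}_W$ inside $\Cl(W)_\R\cong\Cl(X)_\R$, and its nef cone is given by $\Nef(W)=\bigcap_{\tau\in\mathcal{B}_W}\tau$. By Proposition~\ref{prop:neat}, this agrees with $\Nef(X)$ under the canonical identification $i^*\colon\Cl(W)\stackrel{\cong}{\to}\Cl(X)$. A completion $W\hookrightarrow Z$ is \emph{sharp} exactly when the fan of $Z$ introduces no new rays, equivalently, when the Gale-dual bunch $\mathcal{B}_Z$ is a saturated bunch in the same space $\Cl(W)_\R$ containing $\mathcal{B}_W$ and defining the nef cone of a complete toric variety.

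With this dictionary in place, the equivalence becomes essentially tautological. If $X$ is fillable with filling cell $\gamma\subseteq\Nef(X)\cong\Nef(W)$, then by Definition~\ref{def:fillable} (and the underlying Def.~2.28 in \cite{R-wMDS}) $\gamma$ arises as $\bigcap_{\tau\in\mathcal{B}}\tau$ for some saturated bunch $\mathcal{B}\supseteq\mathcal{B}_W$ realizing the nef cone of a complete toric variety; Gale-dualizing $\mathcal{B}$ produces a complete fan extending that of $W$ without new rays, hence a sharp completion $W\hookrightarrow Z$. Conversely, given a sharp completion $Z$, its Gale-dual bunch $\mathcal{B}_Z$ is saturated, contains $\mathcal{B}_W$ and yields $\Nef(Z)=\bigcap_{\tau\in\mathcal{B}_Z}\tau\subseteq\Nef(W)\cong\Nef(X)$; this cone is then a filling cell of $X$.

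For the ``in particular'' assertion, one uses that sharpness means $W$ and $Z$ share the same rays, hence the same torus-invariant prime divisors $\{D_\rho\}_{\rho\in\Sigma_Z(1)}=\{D_\rho\}_{\rho\in\Sigma_W(1)}$, and consequently $\Cl(Z)\cong\Cl(W)$ canonically. The composition $i_Z\colon X\hookrightarrow W\hookrightarrow Z$ therefore pulls back the invariant prime divisors of $Z$ to the same distinct irreducible hypersurfaces of $X$ to which the canonical embedding $i\colon X\hookrightarrow W$ pulled back those of $W$, so condition (i) of Definition~\ref{def:neat} is inherited. Condition (ii) follows by composing the isomorphism $\Cl(Z)\cong\Cl(W)$ with the isomorphism $i^*\colon\Cl(W)\stackrel{\cong}{\to}\Cl(X)$ of Proposition~\ref{prop:neat}; the hypothesis that $X$ is complete ensures that the composition $i_Z$ is indeed a closed embedding into the complete variety $Z$ and not merely a locally closed one, so the neat embedding conclusion applies as stated.

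The main obstacle I would expect in fleshing this out is not conceptual but notational: one must carefully check that the Gale-dual description of sharp completions coincides on the nose with the filling-cell Definition~2.28 of \cite{R-wMDS}, in particular that the \emph{saturation} condition on the extending bunch precisely matches the requirement that the completing fan be a genuine fan (no overlapping cones). Once this dictionary is secured, both the equivalence and the neatness of $X\hookrightarrow Z$ are immediate from the previously established Theorems~\ref{thm:canonic-emb} and \ref{prop:neat}.
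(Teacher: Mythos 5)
Your argument is correct and coincides with what the paper itself does: the paper offers no proof of this theorem, deferring entirely to \cite[Thm.~2.33]{R-wMDS}, and the definition-unwinding you perform (filling cell $\leftrightarrow$ saturated bunch containing $\mathcal{B}_W$ and cutting out the nef cone of a complete toric variety $\leftrightarrow$ complete fan on the same rays, i.e.\ a sharp completion) is exactly the content of the descriptive paragraph preceding Definition~\ref{def:fillable}. The one step you rightly flag as requiring care --- that saturation of the extending bunch is precisely Gale-dual to the axioms of a genuine complete fan --- is the technical heart of the cited Thm.~2.33, and your handling of the ``in particular'' clause (sharpness gives the same rays, hence the same invariant prime divisors and $\Cl(Z)\cong\Cl(W)$ because $Z\setminus W$ is small, composed with the isomorphism $i^*$ of Proposition~\ref{prop:neat}, with completeness of $X$ guaranteeing the image is closed in $Z$) is sound.
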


\subsection{The canonical 1-covering of a wMDS}
Let $X$ be a wMDS and consider:
\begin{itemize}
  \item its canonical toric embedding $i:X\hookrightarrow W(\Si)$, as given in Theorem~\ref{thm:canonic-emb},
  \item a toric completion $\iota:W\hookrightarrow Z(\g,\Si')$ of $W$, if existing, as given in
      Theorem~\ref{thm:fillable}, and corresponding to the choice of a filling cell
      $$\g\subseteq\Nef(X)\cong\Nef(W)$$
      arising from a \emph{filling fan} $\Si'$ of $\Si$, that is $\Si'\in\SF(V)$ and $\Si\subseteq\Si'$, being $V$ a
      fan matrix of $W$ (and $Z$).
\end{itemize}
Notice that both $W$ and its completion $Z$ are non-degenerate toric varieties. Then Theorem~\ref{thm:QUOT+}
guarantees the existence of universal 1-coverings $\vf:\widetilde{W}\twoheadrightarrow W$ and
$\psi:\widetilde{Z}\twoheadrightarrow Z$.

\begin{remark}\label{rem:covering}
  Since the fan $\Si'$ of $Z$ is a filling fan of the fan $\Si$ of $W$, recalling the construction (\ref{Sigmatilde})
  of the covering fans $\widetilde{\Si}'$ of $\widetilde{Z}$ and $\widetilde{\Si}$ of $\widetilde{W}$, one immediately
  concludes that $\widetilde{\Si}'$ is  a filling fan of $\widetilde{\Si}$, that is $\widetilde{Z}$ is a completion of
  $\widetilde{W}$, giving rise to the following commutative diagram
  \begin{equation}\label{compl-diag}
          \xymatrix{&\widetilde{W}
          \ar@{->>}[d]^-{\vf}\ar@{^(->}[r]^-{\widetilde{\iota}}&
          \widetilde{Z}\ar@{->>}[d]^-{\psi}\\
                    X\ar@{^(->}[r]^-i&W\ar@{^(->}[r]^-{\iota}&Z}
        \end{equation}
  Moreover:
  \begin{enumerate}
    \item $\Cl(\widetilde{W})\cong\Cl(\widetilde{Z})$ is free and
        $\rk\Cl(\widetilde{W})=\rk\Cl(W)=\rk\Cl(Z)=\rk\Cl(\widetilde{Z})$\,;
    \item $\Cox(\widetilde{W})\cong\Cox(W)\cong\K[\X]\cong\Cox(Z)\cong\Cox(\widetilde{Z})$\,, where the left and
        right isomorphisms are $\K$-algebras isomorphisms and not isomorphisms of graded algebras; in fact
        $\Cox(\widetilde{W})$ and $\Cox(\widetilde{Z})$ are graded on $\Cl(\widetilde{W})\cong\Cl(\widetilde{Z})$,
        while $\Cox(W)$ and $\Cox(Z)$ are graded on $\Cl(W)\cong\Cl(Z)$\,;
    \item $\widetilde{W}$ and $\widetilde{Z}$ are 1-connected, hence they are simply connected by
        Proposition~\ref{prop:1-connesso&simplyconn}.
  \end{enumerate}
\end{remark}

We are now in a position to present and prove the following result.

\begin{theorem}\label{thm:notors1-cov}
  A wMDS $X$ admit a canonical 1-covering $\phi:\widetilde{X}\twoheadrightarrow X$ and a canonical closed embedding
  $\widetilde{i}:\widetilde{X}\hookrightarrow\widetilde{W}$ into the universal 1-covering $\widetilde{W}$ of $W$. They
  fit into the following commutative diagram
        \begin{equation}\label{compl-diag-}
          \xymatrix{\widetilde{X}\ar@{->>}^\phi[d]\ar@{^(->}[r]^-{\widetilde{i}}&\widetilde{W}
          \ar@{->>}[d]^-\vf\\
                    X\ar@{^(->}[r]^-i&W}
        \end{equation}
Morover, the following facts are equivalent:
\begin{enumerate}
  \item $\widetilde{i}$ is neat,
  \item $\Cl(\widetilde{X})$ is free and $\rk(\Cl(\widetilde{X}))=\rk(\Cl(X))$\,,
  \item $\widetilde{X}$ is a wMDS and $\Cox(X)\cong\Cox(\widetilde{X})$ are isomorphic as $\K$-algebras, differing
      from each other only by their gradation over $\Cl(X)$ and $\Cl(\widetilde{X})$, respectively.
\end{enumerate}
    Finally, if $X$ is fillable, there is an open embedding $\widetilde{\iota}:\widetilde{W}\hookrightarrow
    \widetilde{Z}$ into the universal 1-covering $\psi:\widetilde{Z}\twoheadrightarrow Z$, completing diagram
    (\ref{compl-diag-}) as follows
        \begin{equation}\label{compl-diag+}
          \xymatrix{\widetilde{X}\ar@{->>}^\phi[d]\ar@{^(->}[r]^-{\widetilde{i}}&\widetilde{W}
          \ar@{->>}[d]^-{\vf}\ar@{^(->}[r]^-{\widetilde{\iota}}&
          \widetilde{Z}\ar@{->>}[d]^-{\psi}\\
                    X\ar@{^(->}[r]^-i&W\ar@{^(->}[r]^-{\iota}&Z}
        \end{equation}
\end{theorem}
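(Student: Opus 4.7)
My plan is to construct $\widetilde X$ as an irreducible component of the (reduced) pre-image $\vf^{-1}(X)\subseteq\widetilde W$, setting $\phi:=\vf|_{\widetilde X}$ and $\widetilde i:\widetilde X\hookrightarrow\widetilde W$ to be the inclusion, so that the commutative diagram (\ref{compl-diag-}) is tautological. To verify that $\phi$ is a Galois 1-covering, I would argue as follows. The Zariski--Nagata purity theorem applied to the 1-covering $\vf:\widetilde W\to W$ forces $\Br(\vf)\subseteq\Sing(W)$, which has codimension $\geq 2$ since $W$ is $\Q$-factorial (Theorem~\ref{thm:canonic-emb}). By Theorem~\ref{thm:1-cov-torico} every irreducible component of $\Br(\vf)$ is contained in a single prime invariant divisor $D_\rho$; neatness of $i$ prevents $X$ from lying in any $D_\rho$, so a codimension count shows $X\cap\Br(\vf)$ to be a small closed subset of $X$. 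Restricted to the complementary big open of $X$, $\phi$ is étale as a base change of $\vf$, and Lemma~\ref{lem:estensione} extends this étale covering to a genuine 1-covering of $X$. The Galois property transfers from $\vf$, with Galois group the stabiliser $H\leq G_{\text{tors}}\cong\Tors(\Cl(X))$ of the chosen irreducible component of $\vf^{-1}(X)$; since $G_{\text{tors}}$ acts transitively on those components, the construction is canonical up to deck transformation of $\vf$.

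For $(1)\Rightarrow(2)$, neatness of $\widetilde i$ yields $\widetilde i^*:\Cl(\widetilde W)\stackrel{\cong}{\longrightarrow}\Cl(\widetilde X)$, and Remark~\ref{rem:covering}(1) makes $\Cl(\widetilde W)$ free of rank $r$. For $(2)\Rightarrow(1)$, I first verify that the invariant pre-image divisors $\widetilde D_\rho=\vf^\#(D_\rho)$ of $\widetilde W$ pull back under $\widetilde i$ to pairwise distinct irreducible hypersurfaces $\widetilde i^{-1}(\widetilde D_\rho)=\phi^\#(i^{-1}(D_\rho))$ of $\widetilde X$: pairwise distinctness and irreducibility descend from neatness of $i$ along the finite covering $\phi$, while $\widetilde X\not\subseteq\widetilde D_\rho$ follows from $X\not\subseteq D_\rho$. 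Hence Proposition~\ref{prop:pullback} provides a well-defined $\widetilde i^*:\Cl(\widetilde W)\to\Cl(\widetilde X)$. Using the functorial commutativity $\widetilde i^*\circ\vf^*=\phi^*\circ i^*$, together with the isomorphism $i^*$ of Proposition~\ref{prop:neat}, the surjection $\vf^*$ of Theorem~\ref{thm:QUOT+}, and the surjectivity of $\phi^*$ (which holds in this setting because $\phi$ inherits from $\vf$ a toric-like presentation on the regular locus, compare Corollary~\ref{cor:QUOT}), one deduces $\widetilde i^*$ to be surjective. A surjection between free abelian groups of the same rank $r$, assured by (2) and Remark~\ref{rem:covering}(1), is automatically an isomorphism, giving (1).

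For $(1)\Leftrightarrow(3)$, assume (1). By Remark~\ref{rem:covering}(2) the total coordinate space $\overline W=\Spec\K[\X]$ is common to $W$ and $\widetilde W$, and the defining ideal $I\subseteq\K[\X]$ of $\overline X$ is $\Cl(X)$-homogeneous, hence automatically $\Cl(\widetilde W)$-homogeneous via the surjection $\Cl(X)\cong\Cl(W)\twoheadrightarrow\Cl(\widetilde W)$ (which only kills the torsion part). Theorem~\ref{thm:canonic-emb} applied to the neat embedding $\widetilde i$ then identifies $\Cox(\widetilde X)\cong\K[\X]/I\cong\Cox(X)$ as $\K$-algebras, the two gradings differing exactly by this torsion quotient; finite generation transfers, making $\widetilde X$ a wMDS. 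Conversely, if $\widetilde X$ is a wMDS with $\Cox(\widetilde X)\cong\Cox(X)$ as $\K$-algebras, their Cox bases coincide in cardinality and the $\Cl(\widetilde X)$-grading refines the $\Cl(X)$-grading modulo torsion, yielding (2).

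Finally, the fillable case follows by pasting diagram (\ref{compl-diag}) of Remark~\ref{rem:covering} to the right of (\ref{compl-diag-}): the canonical open embedding $\widetilde\iota:\widetilde W\hookrightarrow\widetilde Z$ is furnished by Remark~\ref{rem:covering} itself, and the commutativity of the resulting (\ref{compl-diag+}) is inherited. The main technical obstacle is twofold: controlling the codimension of $X\cap\Br(\vf)$ in $X$, which is where neatness of $i$ and $\Q$-factoriality of $W$ are genuinely used, and securing the surjectivity of $\phi^*:\Cl(X)\to\Cl(\widetilde X)$ in the implication $(2)\Rightarrow(1)$, since $\phi$ is neither étale nor a toric morphism in general and none of the standard tools directly apply.
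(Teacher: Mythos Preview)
Your fiber-product construction of $\widetilde X$ as an irreducible component of $\vf^{-1}(X)$ is equivalent to the paper's, but the paper takes a structurally different route that makes the hard steps tractable: it defines $\widetilde X:=j_W\circ\widehat i(\widehat X)/H$, where $H=\Hom(\Cl(\widetilde W),\K^*)$ is the maximal subtorus of the characteristic quasi-torus $G=\Hom(\Cl(X),\K^*)$, sitting in the exact sequence $1\to H\to G\to\boldsymbol\mu\to 1$ with $\boldsymbol\mu\cong\Tors(\Cl(X))$. This buys two things your approach lacks. First, irreducibility of $\widetilde X$ is immediate from irreducibility of the characteristic space $\widehat X$; the full preimage $\vf^{-1}(X)$ \emph{is} $\widehat X/H$, so your worry about choosing a component and working with a proper stabilizer never arises, and the Galois group of $\phi$ is the full $\boldsymbol\mu$. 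Second, and crucially, the quotient description makes the $\boldsymbol\mu$-action on $\widetilde X$ explicit, and this is exactly what the paper uses to prove its key Lemma: $\ker\phi^*=\Tors(\Cl(X))$. The argument is that if $\phi^\#(D)=(f)$ then $f$ is $\boldsymbol\mu$-homogeneous, so $x\mapsto f(y)^{|\boldsymbol\mu|}$ for $y\in\phi^{-1}(x)$ descends to a rational function on $X$ whose divisor is $|\boldsymbol\mu|\cdot D$.

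This is where your acknowledged obstacle becomes a genuine gap. You try to assert that $\phi^*$ is surjective, but this is \emph{false in general} (the Enriques example in \S\ref{ssez:Enriques} shows it fails) and is in fact equivalent to each of (1), (2), (3). What the paper proves unconditionally is only $\ker\phi^*=\Tors(\Cl(X))$, hence $\im\phi^*\cong\Z^r$; then the commutative square of class-group pullbacks shows $\im(\widetilde i^*)=\im(\phi^*)$, so $\widetilde i^*$ is automatically injective and is an isomorphism precisely when $\phi^*$ is onto. The equivalence with (3) is then obtained by pushing the Cox sheaf forward along $p_{\widetilde X}:\widehat X\to\widetilde X$ and comparing the resulting $\im\phi^*$-graded algebra with the actual Cox ring of $\widetilde X$. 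Your handwave ``$\phi$ inherits from $\vf$ a toric-like presentation'' does not substitute for this; the descent argument for $\ker\phi^*$ genuinely needs the quotient presentation of $\widetilde X$, which your fiber-product viewpoint does not supply directly.
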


\begin{definition}
  In the same notation of Theorem~\ref{thm:notors1-cov}, $\phi:\widetilde{X}\twoheadrightarrow X$ is called the
  \emph{canonical 1-covering} of $X$ and we say that $\widetilde{X}$ is a \emph{torsion-free, rank-preserving,
  1-covering wMDS} of $X$ when the equivalent conditions (1), (2), (3) hold.
\end{definition}

\begin{proof}[Proof of Theorem~\ref{thm:notors1-cov}]
  Given the universal 1-covering $\vf:\widetilde{W}\twoheadrightarrow W$, we get the following short exact sequence of
  abelian groups, asso\-cia\-ted with the canonical torsion subgroup $\Tors(\Cl(W))\leq\Cl(W)$
  \begin{equation*}
    \xymatrix{0\ar[r]&\Tors(\Cl(W))\ \ar@{^(->}[r]&\Cl(W)\ar[r]^-{\vf^*}&\Cl(\widetilde{W})\ar[r]&0}
  \end{equation*}
  Since $\K^*$ is reductive, dualizing over $\K^*$ gives the short exact sequence
  \begin{equation*}
    \xymatrix{1\rightarrow\Hom(\Cl(\widetilde{W}),\K^*)\
    \ar@{^(->}[r]&\Hom(\Cl(W),\K^*)\ar[r]^-{\vf^*}&\Hom(\Tors(\Cl(W)),\K^*)\rightarrow1}
  \end{equation*}
  Since $\Cl(\widetilde{W})$ is free, $H:=\Hom(\Cl(\widetilde{W}),\K^*)$ turns out to be a full subtorus of the
  quasi-torus $G=\Hom(\Cl(W),\K^*)\cong\Hom(\Cl(X),\K^*)$, giving rise to the finite quotient
\begin{equation*}
  \boldsymbol\mu:=\Hom(\Tors(\Cl(W)),\K^*)\cong G/H
\end{equation*}
By item (2) in Remark~\ref{rem:covering}, one has
\begin{equation*}
  \overline{W}=\Spec(\Cox(W))\cong\Spec\K[\X]\cong\K^m\cong\Spec(\Cox(\widetilde{W}))=\overline{\widetilde{W}}
\end{equation*}
where $m=|\X|$.
Under this identification of Cox rings and total coordinate spaces, also irrelevant ideals and loci of $W$ and
$\widetilde{W}$  coincide, by definition (\ref{Sigmatilde}) of the fan $\widetilde{\Si}$. Recalling diagram
(\ref{embediag}), one then has the following quotient description of the 1-covering
$\vf:\widetilde{W}\twoheadrightarrow W$
\begin{equation*}
  \xymatrix{\widetilde{W}\cong j_W(\widehat{W})/H\ar@{->>}[r]^-\vf_-{/\boldsymbol\mu}&j_W(\widehat{W})/G\cong W}
\end{equation*}
and of the canonical toric embedding
\begin{equation*}
 \xymatrix{X\cong \left. j_W\circ\widehat{i}\left(\widehat{X}\right)\right/G\,\ar@{^(->}[r]^-i&
 j_W(\widehat{W})/G\cong W}
\end{equation*}
Define
\begin{equation}\label{tf1-covering}
 \left. \widetilde{X}:=j_W\circ\widehat{i}\,(\widehat{X})\right/H
\end{equation}
This comes with an associated closed embedding $\widetilde{X}\stackrel{\widetilde{i}}{\hookrightarrow}\widetilde{W}$,
equivariant \wrt the $H$-action, and the following commutative diagram
\begin{equation}\label{digramma-emb-cov}
  \xymatrix{\widetilde{X}= \left.
  j_W\circ\widehat{i}\left(\widehat{X}\right)\right/H\,\ar@{^(->}[r]^-{\widetilde{i}}\ar@{->>}[d]_-{\phi}^-{/\boldsymbol\mu}&
  j_W(\widehat{W})/H\cong \widetilde{W}\ar@{->>}[d]_-{\vf}^-{/\boldsymbol\mu}\\
 X\cong \left. j_W\circ\widehat{i}\left(\widehat{X}\right)\right/G\,\ar@{^(->}[r]^-i&
 j_W(\widehat{W})/G\cong W}
\end{equation}
which is precisely the commutative diagram (\ref{compl-diag-}).
Let us show that $\phi:\widetilde{X}\twoheadrightarrow X$ is a 1-covering. In fact
$\vf:\widetilde{W}\twoheadrightarrow W$ is a toric 1-covering and $W$ is non-degenerate. Since $\vf$ is unramified in
codimension 1, Theorem~\ref{thm:1-cov-torico} implies that
\begin{equation*}
  \Br(\vf)\subseteq R:=\bigcup_{1\leq i<j\leq m}D_i\cap D_j
\end{equation*}
Proposition~\ref{prop:neat} shows that $i$ is a neat closed embedding. Then $\Br(\phi)\subseteq X\cap R$ still has
codimension greater than 1 in $X$.

\noindent Notice now that
\begin{equation*}
  \forall\,j=1,\ldots,m\quad \phi^{-1}(i^{-1}(D_j)=(i\circ\phi)^{-1}(D_j)= (\vf\circ
  \widetilde{i})^{-1}(D_j)=\widetilde{i}^{-1}(\vf^{-1}(D_j)
\end{equation*}
Since $i$ is a neat embedding and $\phi$ is a 1-covering, then $\{\phi^{-1}(i^{-1}(D_j)\}_{j=1}^m$ is a set of
pairwise distinct hypersurfaces of $\widetilde{X}$. On the other hand, $\{\vf^{-1}(D_j)\}_{j=1}^m$ is the set of torus
invariant prime divisors of $\widetilde{W}$. Then the closed toric embedding $\widetilde{i}$ satisfies hypotheses of
Proposition~\ref{prop:pullback}, so giving a well defined pull back homomorphism
$\widetilde{i}^*:\Cl(\widetilde{W})\to\Cl(\widetilde{X})$\,. Consider the following commutative diagram of group
homomorphisms
\begin{equation*}
  \xymatrix{\Cl(W)\ar[d]^-{\vf^*}_-\cong\ar[r]^-{i^*}_-\cong&\Cl(X)\ar[d]^-{\phi^*}\\
            \Cl(\widetilde{W})\ar[r]^-{\widetilde{i}^*}&\Cl(\widetilde{X})}
\end{equation*}
being the pull back $\phi^*:\Cl(X)\to\Cl(\widetilde{X})$ well defined by (\ref{Weilpullback}) in
\S~\ref{ssez:pullback}. Assume the following fact, whose proof is postponed.

\begin{lemma}\label{lem:ker}
  $\ker\phi^*=\Tors(\Cl(X))$
\end{lemma}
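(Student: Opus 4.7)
The plan is to verify the two inclusions separately, using the commutative diagram just drawn for one direction and a norm/pushforward argument for the other.

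For the inclusion $\Tors(\Cl(X))\subseteq\ker\phi^*$, I would exploit the fact that $i^{*}\colon\Cl(W)\xrightarrow{\cong}\Cl(X)$ is an isomorphism by Proposition~\ref{prop:neat}, so in particular it sends torsion to torsion. Thus, for $[D]\in\Tors(\Cl(X))$ there exists $[D']\in\Tors(\Cl(W))$ with $i^*[D']=[D]$. By Theorem~\ref{thm:QUOT+} one has $\Tors(\Cl(W))=\ker\vf^*$, so $\vf^*[D']=0$, and the commutativity $\widetilde{i}^*\circ\vf^*=\phi^*\circ i^*$ immediately yields $\phi^*[D]=\widetilde{i}^*\vf^*[D']=0$.

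For the reverse inclusion $\ker\phi^*\subseteq\Tors(\Cl(X))$, the idea is to run the classical ``push-pull equals multiplication by degree'' argument, adapted to a 1-covering. Pick $[D]\in\ker\phi^*$; then $\phi^\#D=(f)$ for some $f\in\K(\widetilde{X})^*$. The map $\phi$ is finite and surjective between normal irreducible varieties, so the function field extension $\K(\widetilde{X})/\K(X)$ is finite of degree $n:=\deg\phi$, and the norm $g:=N_{\K(\widetilde{X})/\K(X)}(f)\in\K(X)^*$ is well defined. The standard compatibility between norm and proper pushforward gives $(g)=\phi_*((f))=\phi_*(\phi^\# D)$ in $\Div(X)$. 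On the étale locus $X\setminus C$, with $C=\Br\phi$, the morphism $\phi$ restricts to a finite étale covering of degree $n$, so $\phi_*\phi^\# D=nD$ there. Since $X$ is normal and $C$ is small, the restriction $\Div(X)\twoheadrightarrow\Div(X\setminus C)$ passes to an isomorphism on class groups, so the equality $(g)=nD$ holds in $\Cl(X)$; equivalently, $n[D]=0$, i.e. $[D]\in\Tors(\Cl(X))$.

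The main technical obstacle I anticipate is making the norm/pushforward step genuinely rigorous on all of $X$, rather than only on the étale locus: the identity $\phi_*((f))=(N(f))$ is classical for finite flat morphisms of normal varieties, but $\phi$ is only guaranteed to be finite and étale on a big open, whereas over $\Br\phi$ it is merely finite. The way out is exactly the normality of $X$ together with $\codim_X C\geq 2$: by \S~\ref{ssez:pullback} (formula~(\ref{pullback})), pull-back and push-forward of Weil divisors are defined by taking closures of what happens on big opens, and for normal varieties the class group is unchanged by removing a small closed subset. Hence the equality $(g)=nD$ established over $X\setminus C$ extends to an equality of classes in $\Cl(X)$, and the proof concludes. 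Combining the two inclusions gives $\ker\phi^*=\Tors(\Cl(X))$.
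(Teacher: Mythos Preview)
Your proof is correct. The first inclusion is verbatim the paper's argument. For the reverse inclusion the paper takes a different, more explicit route exploiting the Galois structure of $\phi$: since $\phi$ is the quotient by the finite group $\boldsymbol\mu=G/H$, the divisor $\phi^\#(D)$ is $\boldsymbol\mu$-invariant, so one may take $f$ to be $\boldsymbol\mu$-homogeneous; then the $|\boldsymbol\mu|$-th power $f^{|\boldsymbol\mu|}$ is $\boldsymbol\mu$-invariant, hence descends to a function $\overline{f}\in\K(X)^*$ with $(\overline{f})=|\boldsymbol\mu|\,D$, giving $|\boldsymbol\mu|\,[D]=0$.

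The two arguments are essentially the same computation in disguise: for a Galois cover with group $\boldsymbol\mu$ the field norm is $N(f)=\prod_{\zeta\in\boldsymbol\mu}\zeta\cdot f$, and for a homogeneous $f$ this equals a constant times $f^{|\boldsymbol\mu|}$, which is exactly the paper's $\overline{f}$. Your norm/pushforward formulation has the advantage of not needing to invoke the group action or the homogeneity of $f$, and would work for any finite morphism \'etale in codimension~1 between normal varieties. Incidentally, your worry about the branch locus is harmless for a simpler reason than you give: since $\Br\phi$ has codimension $\geq 2$, the map $\phi$ is unramified at \emph{every} codimension-1 point of $X$, so $\phi_*\phi^\# D=nD$ already holds as an equality of Weil divisors on all of $X$, not merely in $\Cl(X)$.
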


Therefore $\rk(\im\phi^*)=\rk(\Cl(X))=\rk(\Cl(W))=\rk(\Cl(\widetilde{W}))$, meaning that $\widetilde{i}$ is neat if an
only if $\phi^*$ is surjective, that is if and only if $\Cl(\widetilde{X})$ is free and
$\rk(\Cl(\widetilde{X}))=\rk(\Cl(X))$, proving that $(1)\,\Leftrightarrow\,(2)$.

\noindent To show that $(2)\,\Leftrightarrow\,(3)$, notice that by construction we have the following commutative
diagram
\begin{equation*}
\xymatrix{\widehat{X}\ar@{->>}[dd]^-{p_X}\ar@{->>}[rd]^-{p_{\widetilde{X}}}\ar@{^(->}[rr]^-{\widehat{i}}
              &&\widehat{W}\ar@{->>}[rd]^-{p_{\widetilde{W}}}\ar@{->>}[dd]^<<<<<<<{p_W}&\\
              &\widetilde{X}\ar@{->>}[ld]^-\phi\ar@{^(->}[rr]^<<<<<<{\widetilde{i}}&&\widetilde{W}\ar@{->>}[ld]^-\vf\\
              X\ar@{^(->}[rr]^-{i}&&W&}
\end{equation*}
Define $\widetilde{\cox}:=(p_{\widetilde{X}})_*\cO_{\widehat{X}}$\,.
Recall that the canonical morphism $p_X$ of the relative spectrum construction give the following isomorphism
\begin{equation*}
  \cox\cong (p_X)_*\cO_{\widehat{X}}=\phi_*\left(
  (p_{\widetilde{X}})_*\cO_{\widehat{X}}\right)=\phi_*\,\widetilde{\cox}
\end{equation*}
Passing to global sections and observing that $\phi^{-1}(X)=\widetilde{X}$, we get that
\begin{equation*}
  \Cox(X)=\Ga(X,\cox)\cong\Ga(\widetilde{X},\widetilde{\cox})
\end{equation*}
This is not an isomorphism of graded algebras, but it suffices to prove that $\Ga(\widetilde{X},\widetilde{\cox})$ is
a finitely generated algebra.

\noindent For what concerning their gradations, notice that
\begin{equation*}
  \cox=\bigoplus_{\d\in\Cl(X)}\cox_\d\ \cong\ \phi_*\,\widetilde{\cox}=
  \bigoplus_{\eta\in\im\phi^*}\phi_*\,\widetilde{\cox}_\eta\cong
  \bigoplus_{\eta\in\im\phi^*}\left(\bigoplus_{\d\in(\phi^*)^{-1}(\eta)} \cox_\d\right)
\end{equation*}
Call $\widetilde{X}'$ the wMDS admitting Cox sheaf and class group given by $\widetilde{\cox}$ and
$\Cl(\widetilde{X}')=\im\phi^*$, respectively. Applying Theorem~\ref{thm:canonic-emb} and Proposition~\ref{prop:neat}
to $\widetilde{X}$ and $\widetilde{X}'$, by replacing the quasi-torus action of $G$ with the torus action of $H$ and
$H':=\Hom(\im\phi^*,\K^*)$, respectively, one gets
\begin{equation*}
   \left(\widetilde{X}'=\Spec_{\widetilde{X}}(\widetilde{\cox})/H'\right)\ \cong\ \left(\widehat{X}/H=
   \widetilde{X}\right)\\ \Longleftrightarrow\ \im\phi^*=\Cl(\widetilde{X})
\end{equation*}
For what concerning the last part of the statement, notice that $X$ is fillable if and only if $\widetilde{X}$ is
fillable. In particular, recalling diagram (\ref{compl-diag}), the commutative diagram
(\ref{digramma-emb-cov}) extends to give the following one
\begin{equation*}
  \xymatrix{\widetilde{X}= \left.
  j_W\circ\widehat{i}\left(\widehat{X}\right)\right/H\,\ar@{^(->}[r]^-{\widetilde{i}}\ar@{->>}[d]_-{\phi}^-{/\boldsymbol\mu}&
  j_W(\widehat{W})/H\cong \widetilde{W}\ar@{^(->}[r]^-{\widetilde{\iota}}\ar@{->>}[d]_-{\vf}^-{/\boldsymbol\mu}&
  j_Z(\widehat{Z})/H\cong \widetilde{Z}\ar@{->>}[d]_-{\psi}^-{/\boldsymbol\mu}\\
 X\cong \left. j_W\circ\widehat{i}\left(\widehat{X}\right)\right/G\,\ar@{^(->}[r]^-i&
 j_W(\widehat{W})/G\cong W\ar@{^(->}[r]^-{\iota}&j_Z(\widehat{Z})/G\cong Z}
\end{equation*}
which is precisely the commutative diagram (\ref{compl-diag+}).  \end{proof}

\begin{proof}[Proof of Lemma \ref{lem:ker}] \hfill\newline
$\Tors(\Cl(X))\subseteq \ker\phi^*$\,. In fact, if $\d\in\Tors(\Cl(X))$ then $(i^*)^{-1}(\d)\in\Tors(\Cl(W))$.
Therefore $\vf^*((i^*)^{-1}(\d))=0$, so giving $\phi^*(\d)=\widetilde{i}^*\circ\vf^*\circ (i^*)^{-1}(\d)=0$\,.

\halfline\noindent $\ker\phi^*\subseteq\Tors(\Cl(X))$\,. Consider $\d\in\Cl(X)$ such that $\phi^*(\d)=0$. Then, for
every $D\in d_K^{-1}(\d)$ the divisor $\phi^\#(D)=\phi^{-1}(D)$ is principal. In particular it is an invariant divisor
\wrt the action of $\Bmu$, meaning that $\phi^\#(D)=(f)$ for some $\Bmu$-homogeneous function
$f\in\K(\widetilde{X})^*$. Consider the $|\Bmu|$-power $q:\K\to\K$, such that $q(z)=z^{|\Bmu|}$, and define
$\overline{f}\in\K(X)^*$ by setting
\begin{equation*}
  \forall\,x\in X\quad \overline{f}(x):=q(f(y))\ \text{for some}\ y\in\phi^{-1}(x)
\end{equation*}
$\overline{f}$ is well defined because $f$ $\Bmu$-homogeneous gives
$$\forall\,\zeta\in\Bmu\quad q(f(\zeta\cdot y))=q(f(y))$$
Notice that $|\Bmu|D=\phi(\phi^\#(D))=(\overline{f})\sim 0$\,, so giving that $D\in \Tors(\Cl(X))$\,.
\end{proof}

 \begin{remark}
   Notice that the 1-covering $\phi:\widetilde{X}\twoheadrightarrow X$ is \emph{canonical}, in the sense that it does
   not depend on the choice of the set of generators $\X$. In fact, for a different choice $\X'$,  let
   $i':X\hookrightarrow W'$ be the $\X'$-canonical toric embedding. By Proposition~ \ref{prop:neat}
   $$G:=\Hom(\Cl(X),\K^*)\cong\Hom(\Cl(W),\K^*)\cong\Hom(\Cl(W'),\K^*)$$
     Then a free part of $G$ is isomorphic to $H$, that is
         \begin{equation*}
           \Hom(\Cl(\widetilde{W}),\K^*)\cong H\cong\Hom(\Cl(\widetilde{W}'),\K^*)
         \end{equation*}
         and the same holds for the torsion subgroup
         \begin{equation*}
           \Hom(\Tors(\Cl(W)),\K^*)\cong\mu\cong\Hom(\Tors(\Cl(W')),\K^*)\,.
         \end{equation*}
         On the other hand
     $\widehat{X}=\Spec_X\left(\cox\right)\cong\widehat{X}'$\,.
   Therefore the 1-covering
   $$\xymatrix{\widetilde{X}=\left.j_W\circ\widehat{i}\left(\widehat{X}\right)\right/H\cong
   \left.j_{W'}\circ\widehat{i}'\left(\widehat{X}'\right)\right/H=\widetilde{X}'\ar@{->>}[r]^-\phi_-{/\mu}&X}$$
   is canonically fixed, up to isomorphisms.
    \end{remark}

\subsection{When is the canonical embedding of the canonical 1-covering a neat embedding?}\label{ssez:neat}

Given a wMDS $X$ with canonical toric embedding $i:X\hookrightarrow W$, let $\phi:\widetilde{X}\twoheadrightarrow X$
be the canonical 1-covering, constructed in Theorem~\ref{thm:notors1-cov}, and
$\widetilde{i}:\widetilde{X}\hookrightarrow \widetilde{W}$ be its canonical closed toric embedding giving rise to the
commutative diagram (\ref{compl-diag-}). Keeping in mind the equivalent conditions (1), (2), (3) in the statement of
Theorem~\ref{thm:notors1-cov}, being neat for $\widetilde{i}$ is a sort of extension to $\Q$-factorial varieties of
the Grothendieck-Lefschetz theorem \cite[Exp.~XI]{SGA2}, for the class group morphism
$\widetilde{i}^*:\Cl(\widetilde{W})\to\Cl(\widetilde{X})$. Following \cite{Jow}, \cite{AL} and \cite{RS}, we can
obtain sufficient conditions to get neatness of $\widetilde{i}$. At this purpose we need to introduce the following

\begin{definition}
  A $\Q$-factorial toric variety $W=W(\Si)$ (or equivalently its simplicial fan $\Si$) is called \emph{$k$-neighborly}
  if for any $k$ rays in $\Si(1)$ the convex cone they span is in $\Si(k)$. Equivalently, by Gale duality, this means
  that
  \begin{equation}\label{k-neighborly}
    \Nef(X)\subseteq\bigcap_{1\leq i_1<\cdots<i_k\leq |\Si(1)|} \left\langle Q^{\{i_1,\ldots,i_k\}}\right\rangle
  \end{equation}
\end{definition}

The following characterization of a $k$-neighborly toric variety follows by the inclusion (\ref{k-neighborly}),
recalling the natural correspondence between the bunch of cones of $W$ and the generators of its irrelevant ideal $\I
rr(W)$. See also \cite[Prop.~10, Rmk.~11]{Jow} for further details.

\begin{proposition}\label{prop:neighborly}
  A $\Q$-factorial toric variety $W$ is $k$-neigh\-bor\-ly if and only if the irrelevant locus
  $\widetilde{B}\subseteq\overline{W}$ has codimension $\codim_{\overline{W}}\widetilde{B}>k\,.$
\end{proposition}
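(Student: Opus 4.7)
The plan is to unfold both sides of the equivalence into combinatorial statements about the simplicial complex underlying the fan $\Si$ of $W$, so that they become literally the same condition on the minimal non-faces.

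First I would describe $\widetilde{B}$ explicitly as a union of coordinate linear subspaces of $\overline{W}\cong\K^m$. Recall that the irrelevant ideal of the non-degenerate toric variety $W$ is generated by the monomials $x^{\widehat{\s}}:=\prod_{\langle\v_i\rangle\notin\s(1)}x_i$, as $\s$ runs through the maximal cones of $\Si$. A point $z\in\overline{W}$ therefore lies in the complement $\widehat{W}=\overline{W}\setminus\widetilde{B}$ if and only if there exists a maximal cone $\s\in\Si$ with $x^{\widehat{\s}}(z)\neq 0$, i.e. $\{i:z_i=0\}\subseteq\s(1)$. Viewing $\Si$ as an abstract simplicial complex $\Delta_\Si$ on $\{1,\ldots,m\}$, whose faces are precisely those index sets $I$ such that $\langle V_I\rangle\in\Si$, this says $z\in\widetilde{B}$ if and only if the support $I(z):=\{i:z_i=0\}$ is a \emph{non-face} of $\Delta_\Si$. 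Writing $L_I:=\{z\in\overline{W}:z_i=0\ \forall\,i\in I\}$ (of codimension $|I|$), the fact that non-faces form an up-set immediately gives the decomposition
\begin{equation*}
\widetilde{B}\ =\ \bigcup_{I\ \text{minimal non-face of}\ \Delta_\Si}L_I\,.
\end{equation*}

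Next I would read off the codimension. Since each $L_I$ is a linear subspace of codimension exactly $|I|$, the displayed decomposition gives
\begin{equation*}
\codim_{\overline{W}}\widetilde{B}\ =\ \min\bigl\{|I|\,:\,I\subseteq\{1,\ldots,m\}\ \text{is a non-face of}\ \Delta_\Si\bigr\}\,.
\end{equation*}
In particular, $\codim_{\overline{W}}\widetilde{B}>k$ if and only if every subset of $\{1,\ldots,m\}$ of cardinality at most $k$ is a face of $\Delta_\Si$, which (since $\Si$ is simplicial, so faces of $\Delta_\Si$ of size $k$ correspond to cones in $\Si(k)$) is exactly the assertion that every choice of $k$ rays in $\Si(1)$ spans a cone in $\Si(k)$, i.e. that $W$ is $k$-neighborly.

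Both directions of the biconditional will then follow at once by comparing the two equivalent combinatorial reformulations. I do not foresee a serious obstacle here: the only point requiring a little care is the translation between the abstract simplicial complex $\Delta_\Si$, the indexing of minimal non-faces, and the generating set $\{x^{\widehat\s}\}$ of $\widetilde{\irr}$, together with checking that simpliciality of $\Si$ ensures that a $k$-subset of rays being a face of $\Delta_\Si$ really coincides with those rays spanning an element of $\Si(k)$ (rather than a cone of lower dimension). I would also remark, to match the alternative formulation (\ref{k-neighborly}) given in the excerpt, that Gale duality identifies the chambers of the secondary fan indexed by these maximal cones $\s$ with the cones $\langle Q^{\s(1)^c}\rangle$ in $\Cl(W)_\R$, so that the $k$-neighborly condition on $\Si$ corresponds to the stated inclusion of $\Nef(W)$ in the intersection of all such chambers over $k$-subsets of columns of $Q$.
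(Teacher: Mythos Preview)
Your argument is correct and is essentially a fully unpacked version of what the paper only sketches: the paper merely observes that the statement follows from the Gale-dual reformulation (\ref{k-neighborly}) together with the correspondence between the bunch of cones of $W$ and the monomial generators of $\irr(W)$, citing \cite[Prop.~10, Rmk.~11]{Jow} for details. Your route via the simplicial complex $\Delta_\Si$ and its minimal non-faces is exactly the fan-side translation of that correspondence, so the two approaches coincide; you have simply made explicit the combinatorics that the paper leaves to the reference.
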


We are now in a position of giving the following sufficient conditions for the neatness of $\widetilde{i}$.

\begin{proposition}\label{prop:s.c.neat}
Let $X,\widetilde{X},W,\widetilde{W}$ as above, then the canonical closed embedding
$\widetilde{i}:\widetilde{X}\hookrightarrow\widetilde{W}$ is neat if one of the following happens:
\begin{enumerate}
\item if $\widetilde{X}$ is a smooth complete intersection of codimension $l$ in $\widetilde{W}$ and the latter is a
    smooth, projective, $(1+l)$-neighborly toric variety,  with $\dim(\widetilde{W})\geq 3+l$;
\item if $X$ is a complete intersection of codimension $l$ in $W$ and the irrelevant locus $B_X\subset \overline{X}$
    has codimension $\geq 1+l$;
\item if $\widetilde{X}\in|D|$ is a general element, with $D$ an ample divisor of $\widetilde{W}$ and the latter is
    projective with $\dim(\widetilde{W})\geq 4$.
\end{enumerate}
\end{proposition}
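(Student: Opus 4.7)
The plan is to verify, in each of the three cases, that the pullback homomorphism $\widetilde{i}^*:\Cl(\widetilde{W})\to\Cl(\widetilde{X})$ is an isomorphism. By Theorem~\ref{thm:notors1-cov}, $\Cl(\widetilde{W})$ is free of rank $\rk(\Cl(X))$; hence showing that $\widetilde{i}^*$ is an isomorphism forces $\Cl(\widetilde{X})$ to be free of the same rank, which is precisely condition (2) in Theorem~\ref{thm:notors1-cov}, hence equivalent to neatness of $\widetilde{i}$. Note also that $\widetilde W$ inherits from $W$ all the geometric properties that matter for a Lefschetz-type argument: smoothness of cones is preserved by the construction $\widetilde V=\G(\G(V))$, $\mathbb{Q}$-factoriality and completeness are preserved by Theorem~\ref{thm:QUOT+}, projectivity is preserved because $\vf:\widetilde W\twoheadrightarrow W$ is finite, and $\widetilde W$ is simply connected by Proposition~\ref{prop:1-connesso&simplyconn}.

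For case (1), the hypotheses are designed to apply Jow's Lefschetz hyperplane theorem \cite{Jow}. Concretely, $\widetilde W$ is a smooth projective $(1+l)$-neighborly toric variety (equivalently, by Proposition~\ref{prop:neighborly}, the codimension of its irrelevant locus in $\overline{\widetilde W}$ is $>1+l$), and $\widetilde X$ is a smooth complete intersection of codimension $l$ with $\dim\widetilde X=\dim\widetilde W-l\geq 3$. Jow's theorem then yields $\widetilde{i}^*:\Pic(\widetilde W)\stackrel{\cong}{\to}\Pic(\widetilde X)$, and the smoothness of both varieties promotes this to the desired isomorphism $\Cl(\widetilde W)\cong\Cl(\widetilde X)$.

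For case (2), the key point is that all the hypotheses transfer from $X\subset W$ to $\widetilde X\subset\widetilde W$ via the construction (\ref{tf1-covering}) and diagram (\ref{digramma-emb-cov}). Indeed, under the identifications $\Cox(W)\cong\Cox(\widetilde W)\cong \K[\X]$ and $\Cox(X)\cong\Cox(\widetilde X)$ (cf. Remark~\ref{rem:covering}(2) and the proof of Theorem~\ref{thm:notors1-cov}), the total coordinate spaces, the defining ideals of the complete intersections and the irrelevant loci all coincide up to changing the grading. Therefore the codimension assumption $\codim_{\overline X}B_X\geq 1+l$ is equivalent to the corresponding assumption for $\widetilde X\subset \widetilde W$, and Artebani–Laface's theorem \cite{AL} on class groups of complete intersections inside a simplicial toric variety then delivers the isomorphism $\widetilde{i}^*:\Cl(\widetilde W)\stackrel{\cong}{\to}\Cl(\widetilde X)$.

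For case (3), we invoke directly the Grothendieck–Lefschetz theorem for normal projective varieties proved by Ravindra and Srinivas \cite{RS}: for a general ample divisor $D$ in a normal projective variety $Y$ with $\dim Y\geq 4$, the restriction $\Cl(Y)\to\Cl(D)$ is an isomorphism. Applying this with $Y=\widetilde W$ (normal, projective since $\vf$ is finite and $W$ admits in this setting a projective completion, $\dim\widetilde W=\dim W\geq 4$) and with $\widetilde X$ a general element of $|D|$ gives the required conclusion. The main obstacle is really concentrated in case (2): one has to check scrupulously that the complete intersection structure and the irrelevant-locus codimension bound are inherited by $\widetilde X\subset\widetilde W$ from $X\subset W$ under the 1-covering construction, which is a matter of unwinding the identifications between total coordinate spaces and irrelevant ideals established in the proof of Theorem~\ref{thm:notors1-cov}; the cases (1) and (3) are, by contrast, direct citations once the geometric properties of $\widetilde W$ listed in the first paragraph are available.
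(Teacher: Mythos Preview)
Your proposal is correct and follows essentially the same route as the paper: invoke Jow for (1), transfer the complete-intersection and irrelevant-locus hypotheses from $X\subset W$ to $\widetilde X\subset\widetilde W$ and apply Artebani--Laface for (2), and cite Ravindra--Srinivas directly for (3), in each case concluding neatness via the equivalences in Theorem~\ref{thm:notors1-cov}. The only cosmetic difference is that the paper phrases (1) as an \emph{iterated} application of Jow's hyperplane theorem (one cut per hypersurface in the complete intersection) and invokes equivalence $(1)\Leftrightarrow(3)$ rather than your smoothness promotion $\Pic=\Cl$.
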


\begin{proof}
  (1) is an iterated application of \cite[Thm.~6]{Jow}, keeping in mind the equivalence established by
  Proposition~\ref{prop:neighborly} and recalling the equivalence (1)\,$\Leftrightarrow$\,(3) in
  Theorem~\ref{thm:notors1-cov}. For (2) notice that by the commutative diagram (\ref{compl-diag-}),  $X$ is a
  complete intersection of codimension $l$ in $W$ if and only if $\widetilde{X}$ is a complete intersection of
  codimension $l$ in $\widetilde{W}$ and
  $    \codim_{\overline{X}}B_X =\codim_{\overline{X}}B_{\widetilde{X}}$.
  Then apply \cite[Thm.~2.1]{AL} and equivalence (1)\,$\Leftrightarrow$\,(2) in Theorem~\ref{thm:notors1-cov} to get
  the neatness of $\widetilde{i}$. Finally (3) is a direct application of \cite[Thm.~1]{RS}, recalling equivalence
  (1)\,$\Leftrightarrow$\,(2) in Theorem~\ref{thm:notors1-cov}.
\end{proof}

\begin{remark}
  Let us notice that non-neatness of the canonical embedding $\widetilde{i}:\widetilde{X}\hookrightarrow\widetilde{W}$ can happen. Consider the example given by a particular Enriques surface admitting the structure of a MDS and explained in the following \S~\ref{ssez:Enriques}. In this case, the canonical 1-covering $\widetilde{X}$ is a $K3$ surface which can never be a MDS, so contradicting condition (3) of Theorem~\ref{thm:notors1-cov}.
\end{remark}

 \subsection{When is the canonical 1-covering the universal one?}\label{ssez:gruppifondamentali}

 Let $X$ be a fillable wMDS and $\widetilde{X}$ be its canonical 1-covering. Let $X\hookrightarrow Z$ and
 $\widetilde{X}\hookrightarrow \widetilde{Z}$ be complete toric embeddings assigned by the choice of a filling chamber
 $\g\subseteq \Nef(W)$, as in Theorem~\ref{thm:notors1-cov}, diagram (\ref{compl-diag+}).
 Corollary~\ref{cor:univ-1-cov} allows us to conclude that
 \begin{itemize}
   \item \emph{$\phi:\widetilde{X}\twoheadrightarrow X$ is the universal 1-covering of $X$ if and only if the open
       subset $\widetilde{X}_{\rm{reg}}$, of regular points of $\widetilde{X}$, is simply connected i.e.
       $\pet(\widetilde{X}_{\rm{reg}},x)=0$ for every regular point $x\in \widetilde{X}_{\rm{reg}}$\,.}
 \end{itemize}
 Notice that $\vf:\widetilde{Z}\twoheadrightarrow Z$ is the universal 1-covering of $Z$, that is
 $\pet(\widetilde{Z}_{\rm{reg}},z)=0$, for every regular point $z\in \widetilde{Z}_{\rm{reg}}$. Therefore asking for
 simply connectedness of $\widetilde{X}_{\rm{reg}}$ translates in a sort of Weak Lefschetz Theorem on the \ét fundamental
 groups of smooth loci in $\widetilde{X}\hookrightarrow \widetilde{Z}$. Clearly we cannot hope this result to hold in general.
  In the following we consider the particular case $\K=\C$ with, in addition, some strong hypotheses on singularities
  of $\widetilde{X}$ and the embedding $\widetilde{X}\hookrightarrow \widetilde{Z}$.

\begin{definition}
Let $X$ be a wMDS and $i:X\hookrightarrow W$ be its canonical toric embedding.
$X$ is called a \emph{complete intersection} if the relations' ideal $I\subset\K[\X]$, such that
$\Cox(X)\cong\K[\X]/I$, is generated by exactly $l:=\codim_W(X)$ polynomials.

Moreover, $X$ is called a \emph{very ample} complete intersection if there exists a sharp completion $Z$ of $W$ and every generator of $I$ defines a very ample divisor in $Z$.

Finally, $X$ is called a \emph{quasi-smooth} complete intersection if the generators of $I$ can be ordered to giving $I=(f_1,f_2,\ldots,f_c)$ in such a way that
\begin{eqnarray*}
  X=Y^c:=\{f_1=\cdots=f_{c-1}=f_c=0\}&\subset& Y^{c-1}:=\{f_1=\cdots=f_{c-1}=0\}\subset\cdots\\
  \cdots&\subset& Y^1:=\{f_1=0\}\subset Y^0:=Z
\end{eqnarray*}
and
\begin{equation*}
  \forall\,i=0,\ldots, c-1\quad Y^i_{\text{\rm reg}}\cap Y^{i+1}\subseteq  Y^{i+1}_{\text{\rm reg}}
\end{equation*}
 \end{definition}

  \begin{definition}[Small $\Q$-factorial modification]\label{def:sQm}
  A birational map $f:X\dashrightarrow Y$, between complete and $\Q$-factorial algebraic varieties, is
  called a \emph{small $\Q$-factorial modification} (s$\Q$m) if it is biregular in codimension 1 i.e. there exist
  Zariski big open subsets $U\subseteq X$ and $V\subseteq Y$ such that $f|_U:U\stackrel{\cong}{\longrightarrow} V$ is
  biregular.
\end{definition}

\begin{remark}
  Notice that a $\Q$-factorial and complete algebraic variety $X$ is a wMDS if and only if there exists a \sqm
  $X\dashrightarrow X'$ such that $X'$ is a MDS \cite[Lemma~1]{R-wMDS}.
\end{remark}

 \begin{theorem}\label{thm:WeakLefschetz}
Assume $\K=\C$ and let $X$ be a complete and fillable wMDS admitting a \sqm
$X\dashrightarrow X'$ to a MDS $X'$ which is a quasi-smooth and very ample complete intersection. Then the canonical torsion free 1-covering
$\widetilde{X}\twoheadrightarrow X$ is the universal 1-covering of $X$.
In particular, a MDS which is a quasi-smooth and very ample complete intersection is simply connected and always admits a universal 1-covering.
 \end{theorem}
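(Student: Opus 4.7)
The plan is to invoke the criterion stated immediately before the theorem: $\phi:\widetilde{X}\twoheadrightarrow X$ is the universal 1-covering of $X$ if and only if $\widetilde{X}_{\text{reg}}$ is simply connected, namely $\pet(\widetilde{X}_{\text{reg}},x)\cong\{1\}$. Since $\K=\C$, by the Riemann existence theorem it suffices to show that the topological fundamental group $\pi_1(\widetilde{X}_{\text{reg}}^{\text{an}},x)$ is trivial, and I will establish this by a Lefschetz type comparison with $\widetilde{Z}_{\text{reg}}^{\text{an}}$ after reducing to the quasi-smooth case via the given \sqm.

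First I would reduce to the case $X=X'$. A \sqm is biregular in codimension~1, hence induces an isomorphism of Cox algebras; by the construction of the canonical 1-covering in Theorem~\ref{thm:notors1-cov} it lifts to a \sqm $\widetilde{X}\dashrightarrow\widetilde{X}'$. The fundamental group in codimension~1 depends only on a common big open subset, hence it is manifestly invariant under \sqm; combining this with the topological analogue of Theorem~\ref{thm:pi1normale} gives
\begin{equation*}
\pi_1(\widetilde{X}_{\text{reg}}^{\text{an}})\cong\pi_1^1(\widetilde{X}^{\text{an}})\cong\pi_1^1({\widetilde{X}'}^{\text{an}})\cong\pi_1({\widetilde{X}'}_{\text{reg}}^{\text{an}}).
\end{equation*}
Hence one may replace $X$ by $X'$ and assume throughout that $X$ is itself a quasi-smooth, fillable MDS, complete intersection of codimension $l$ in its canonical ambient toric variety $W$.

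Second, I complete the ambient toric picture. Being a MDS, $W$ admits a sharp \emph{projective} completion $W\hookrightarrow Z$ by Hu--Keel, which lifts through the last part of Theorem~\ref{thm:notors1-cov} to the commutative diagram~(\ref{compl-diag+}), with $\widetilde{Z}$ a complete projective toric variety whose class group is torsion-free by construction. The topological version of Theorem~\ref{thm:pi11toric}, which for $\K=\C$ is a direct consequence of Danilov's classical formula and of Buczynska's topological excision, then yields $\pi_1(\widetilde{Z}_{\text{reg}}^{\text{an}})=0$. Both the complete intersection structure and quasi-smoothness transfer from $X\hookrightarrow W$ to $\widetilde{X}\hookrightarrow\widetilde{Z}$: indeed $\Cox(\widetilde{X})\cong\Cox(X)$ shows that $\widetilde{X}$ is cut out in $\widetilde{Z}$ by $l$ global sections, and the \'etale-in-codimension-1 nature of $\widetilde{W}\twoheadrightarrow W$ ensures $\Sing(\widetilde{X})\subseteq \Sing(\widetilde{W})\cap\widetilde{X}$, so that $\widetilde{X}_{\text{reg}}=\widetilde{X}\cap\widetilde{Z}_{\text{reg}}$.

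The core step is then a Weak Lefschetz comparison: from $\pi_1(\widetilde{Z}_{\text{reg}}^{\text{an}})=0$ I would deduce $\pi_1(\widetilde{X}^{\text{an}}\cap\widetilde{Z}_{\text{reg}}^{\text{an}})=\pi_1(\widetilde{X}_{\text{reg}}^{\text{an}})=0$. This is to be obtained by invoking the Goresky--MacPherson Lefschetz hyperplane theorem for stratified varieties \cite{G-MacPh}, applied iteratively to the $l$ global sections defining $\widetilde{X}$ inside the smooth quasi-projective ambient $\widetilde{Z}_{\text{reg}}^{\text{an}}$, viewed as an open subset of the projective Whitney-stratified $\widetilde{Z}^{\text{an}}$, in the same spirit as \cite[Thm.~6]{Jow} and \cite[Thm.~2.1]{AL} that were used in Proposition~\ref{prop:s.c.neat}. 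The main obstacle lies in verifying the dimensional, positivity and genericity hypotheses required so that the induced map on $\pi_1$ is an isomorphism (and not merely surjective), together with a separate handling of the degenerate low-dimensional cases, where $\dim\widetilde{X}\leq 1$ forces $\widetilde{X}\cong\P^1$ or a point. Finally, the ``in particular'' statement is obtained by specializing to $X'=X$: completeness and fillability of a quasi-smooth complete intersection MDS are guaranteed by Hu--Keel, so that the hypotheses of the main part automatically apply.
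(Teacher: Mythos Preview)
Your outline is essentially the paper's own proof: reduce via the given \sqm to the quasi-smooth projective situation, use that $\pi_1(\widetilde{Z}'_{\text{reg}})=0$ for the toric universal 1-cover, apply Goresky--MacPherson inductively over the $l$ defining equations, and conclude $\pi_1(\widetilde{X}_{\text{reg}})=0$. The paper only reverses the order (it first proves $\pi_1(\widetilde{X}'_{\text{reg}})=0$ and then transports this across the \sqm via Theorem~\ref{thm:excisione}) and makes the Lefschetz step explicit by composing the projective embedding of $\widetilde{Z}'$ with a Veronese map, so that each hypersurface becomes an honest hyperplane section and Theorem~\ref{thm:G-MacP} applies directly.

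Two points in your write-up need tightening. First, you invoke $\Cox(\widetilde{X})\cong\Cox(X)$ to transfer the complete-intersection structure, but that isomorphism is precisely condition~(3) of Theorem~\ref{thm:notors1-cov} and is not available a priori; the correct justification is the construction~(\ref{tf1-covering}), which exhibits $\widetilde{X}\subset\widetilde{W}$ as the $H$-quotient of the same $\widehat{X}\subset\widehat{W}$, hence cut out by the same $l$ equations. Second, quasi-smoothness gives only the inclusion $\widetilde{Z}_{\text{reg}}\cap\widetilde{X}\subseteq\widetilde{X}_{\text{reg}}$, not the equality you assert; the paper then uses that this open inclusion of irreducible varieties induces a \emph{surjection} on $\pi_1$, which is all that is needed once Goresky--MacPherson has forced $\pi_1(\widetilde{Z}_{\text{reg}}\cap\widetilde{X})=0$.
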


 The previous statement is obtained by the following
 result of Goresky and MacPherson

 \begin{theorem}[see \S\,II.1.2 in \cite{G-MacPh}]\label{thm:G-MacP}
   Let $Y$ be the complement of a closed subvariety of a $n$-dimensional complex analytic variety $\overline{Y}$ and
   $j:\overline{Y}\to \P^N$ be a proper embedding. Let $H\subseteq\P^N$ be a hyperplane. Then the homomorphism induced
   by inclusion on the fundamental groups $\pi_1((j|_Y)^{-1}(H))\to\pi_1(Y)$ is an isomorphism for $n\geq 2$.
 \end{theorem}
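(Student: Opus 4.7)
The plan is to prove this Lefschetz-type statement via stratified Morse theory in the sense of Goresky--MacPherson, running a Morse-theoretic handle decomposition on a Whitney stratification of $j(\overline{Y})$. Since $Y$ is only the complement in $\overline{Y}$ of a closed analytic set, it is not itself closed in $\P^N$, and the argument must be formulated in the stratified category. The first step is to fix a Whitney stratification of $j(\overline{Y})$ compatible with both $j(\overline{Y}\setminus Y)$ and the hyperplane section $j(\overline{Y})\cap H$, so that $Y$ and $(j|_Y)^{-1}(H)$ are unions of open strata.

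The next step is to produce a stratified Morse function whose levels interpolate between a tubular neighborhood of $(j|_Y)^{-1}(H)$ and the whole of $Y$. Choosing a linear form $\ell$ on $\P^N$ cutting out $H$, the real analytic function $f=-\log|\ell|^2$ on $Y$ has its large values concentrated along $(j|_Y)^{-1}(H)$ and its critical points controlled by the geometry of the pencil of hyperplanes through $H$. By the complex-analytic stratified Morse lemma, at each critical point lying on a stratum of complex dimension $s$ the local Morse data decomposes as a product of tangential data (a real handle of index $s$) and normal data whose underlying pair is the complex link of the stratum inside $j(\overline{Y})$. The Hamm--L\^e connectivity bound for complex links shows that this normal pair is $(n-s-1)$-connected, so the overall handle attached to the sublevel set has real dimension at least $n$.

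Feeding this estimate into the handle decomposition associated with $f$ shows that $Y$ is obtained from $(j|_Y)^{-1}(H)$ by attaching only cells of real dimension $\geq n$. The long exact sequence of the pair then gives that $(Y,(j|_Y)^{-1}(H))$ is at least $(n-1)$-connected, and an induction on $n$, using the present theorem applied to the complex link of every positive-dimensional stratum (and also exploiting that $Y\setminus (j|_Y)^{-1}(H)$ is the complement of a closed analytic set in the affine variety $j(\overline{Y})\setminus H$, whose homotopy type is bounded in real dimension by $n$ via Andreotti--Frankel), upgrades the connectivity by one. The resulting $n$-connectedness of the pair yields the claimed $\pi_1$-isomorphism as soon as $n\geq 2$.

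The main obstacle is precisely this promotion step. A naive application of stratified Morse theory alone gives only $(n-1)$-connectedness of the pair, which at $n=2$ produces only surjectivity on $\pi_1$; injectivity for $n=2$ is the genuine technical difficulty, and requires the inductive use of the Lefschetz statement on the complex link of every positive-dimensional stratum, combined with the extra affine-type connectivity coming from the fact that $j(\overline{Y})\setminus H$ sits as a closed analytic subvariety of $\P^N\setminus H\cong\mathbb{A}^N$. Organizing this double induction, and checking that $-\log|\ell|^2$ is a non-depraved stratified Morse function with proper preimages of compact intervals (so that the handle attachment theorem of Goresky--MacPherson applies), is where the bulk of the work concentrates; the rest is standard bookkeeping with the long exact sequence of the pair and Hurewicz.
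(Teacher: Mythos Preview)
The paper does not prove this theorem at all. It is stated as a citation from Goresky--MacPherson's \emph{Stratified Morse Theory}, and the only justification given is the sentence immediately following the statement: ``This statement is deduced from the theorem opening \S II.1.2 in [G-MacPh], by assuming the therein immediately following assumption (1), since $j$ is proper, and assumption (2).'' In other words, the paper treats this as a black-box result and merely records which hypotheses from the cited reference are being invoked.

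Your proposal, by contrast, is an outline of the Goresky--MacPherson proof itself: Whitney stratification, the Morse function $-\log|\ell|^2$, the product decomposition of local Morse data into tangential and normal parts, connectivity of the complex link, and the resulting handle attachments. That is a reasonable high-level summary of how the argument in the book actually runs, and you correctly flag the delicate point---that the naive handle-dimension count yields only $(n-1)$-connectedness of the pair, so that the $\pi_1$-isomorphism at $n=2$ requires the extra input encoded in Goresky--MacPherson's assumption~(2) (which is exactly what the paper is appealing to). So your sketch is in the right spirit and identifies the genuine difficulty, but it goes well beyond what the paper does: the paper simply quotes the result and points to the specific hypotheses in the source, without reproducing any of the stratified Morse theory.
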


 This statement is derived from the theorem at the beginning of \S\,II.1.2 in \cite{G-MacPh}, assuming condition (1) immediately following the statement of that theorem, since $j$ is proper, as well as condition (2).

 \begin{proof}[Proof of Thm.\,\ref{thm:WeakLefschetz}]
 The \sqm $X\dashrightarrow X'$ fits into the following 3-dimensional commutative diagram
 \begin{equation*}
   \xymatrix{\widetilde{X}\ar@{^(->}[dr]\ar@{-->}[rr]^-{\widetilde{f}|_{\widetilde{X}}}\ar@{->>}[dd]^-{\phi}&&\widetilde{X}'\ar@{^(->}[dr]\ar@{->>}[dd]^-<<<<<<<{\phi'}&\\
            &\widetilde{Z}\ar@{-->}[rr]^-<<<<<<<{\widetilde{f}}\ar@{->>}[dd]^-<<<<<<<{\vf}&&\widetilde{Z}'\ar@{->>}[dd]^-{\vf'}\\
            X\ar@{-->}[rr]^-<<<<<<{f|_X}\ar@{^(->}[dr]&&X'\ar@{^(->}[dr]&\\
            &Z\ar@{-->}[rr]^-f&&Z'}
 \end{equation*}
 where:
  \begin{itemize}
    \item vertical maps $\phi$ and $\phi'$ are canonical torsion-free 1-coverings,
    \item vertical maps $\vf$ and $\vf'$ are canonical universal 1-coverings,
    \item diagonal maps are complete toric embeddings associated with the choice of a filling cell
        $\g\subseteq\Nef(X)\cong\Nef(W)$,
    \item horizontal maps $f,\widetilde{f},f|_X,\widetilde{f}|_{\widetilde{X}}$ are small $\Q$-factorial
        modifications: in particular $X'$, $\widetilde{X}'$ are MDS and $Z'$, $\widetilde{Z}'$ are projective
        $\Q$-factorial toric varieties and
    \begin{equation}\label{pi1=0}
\pi_1(\widetilde{Z}'_{\rm{reg}})\cong\{1\}
\end{equation}
  \end{itemize}
Let us first assume that $X'$ is a very ample hypersurface in the projective toric variety
$Z'$. Then, by definition (\ref{tf1-covering}), $\widetilde{X}'$ is a very ample hypersurface of $\widetilde{Z}'$, so giving the following projective embedding of $\widetilde{X}'$
 \begin{equation}\label{embedding}
   \xymatrix{\widetilde{X}'\ar@{^(->}[rr]\ar@{^(->}[rd]&&\P^N\\
            &\widetilde{Z}'\ar@{^(->}[ru]^j&}
 \end{equation}
 so that $\widetilde{X}'=j^{-1}(H)$, for a suitable hyperplane $H\subset\P^N$.
 Apply now Theorem \ref{thm:G-MacP} by setting
 $\overline{Y}=\widetilde{Z}'$, $ Y=\widetilde{Z}'_{\rm{reg}}$\,. Quasi-smoothness of $\widetilde{X}'$ implies  that
 \begin{equation*}
 (j|_{\widetilde{Z}'_{\rm{reg}}})^{-1}(H)=\widetilde{Z}'_{\rm{reg}}\cap\widetilde{X}'\subseteq\widetilde{X}'_{\rm{reg}}
\end{equation*}
The latter inclusion induces a covariant surjection on associated fundamental groups (see e.g. \cite[Thm.~12.1.5]{CLS}
and references therein), so giving
\begin{equation*}
  \{1\}\cong\pi_1(\widetilde{Z}'_{\rm{reg}})\cong\pi_1(\widetilde{Z}'_{\rm{reg}}\cap\widetilde{X}')\twoheadrightarrow
  \pi_1(\widetilde{X}'_{\rm{reg}})\quad\Rightarrow\quad\pi_1(\widetilde{X}'_{\rm{reg}})\cong\{1\}
\end{equation*}
by relation (\ref{pi1=0}) and Theorem~\ref{thm:G-MacP}.
The last step is proving that $\pi_1(\widetilde{X}_{\rm{reg}})\cong\{1\}$\,. In fact, since $\widetilde{X}$ and
$\widetilde{X}'$ are normal and related by the \sqm $\widetilde{f}|_{\widetilde{X}}$, then $\widetilde{X}_{\rm{reg}}$ and
$\widetilde{X}'_{\rm{reg}}$ are smooth and biregular in codimension 1. Then Theorem~\ref{thm:excisione} and
Remark~\ref{rem:K=C+finite} give that $\pi_1(\widetilde{X}_{\rm{reg}})\cong\pi_1(\widetilde{X}'_{\rm{reg}})\cong\{1\}$.

Let us now assume $X'$ be a complete intersection of $c\geq 2$ hypersurfaces of $W'$, hence of its completion $Z'$. This
means that $I=(f_1,\ldots,f_c)$ in $\Cox(Z')\cong\C[x_1,\ldots,x_m]$, where $\C[x_1,\ldots,x_m]/I\cong\Cox(X')$. Then
$X'$ is a very ample hypersurface of the complete intersection $Y$ of $W'$ associated with the ideal $(f_1,\ldots,f_{c-1})$ in the
construction given by Theorem~\ref{thm:canonic-emb}. Then by definition (\ref{tf1-covering}), $\widetilde{X}'$  is a very ample
hypersurface of the complete intersection $\widetilde{Y}\subseteq\widetilde{Z}'$.
In particular, $\widetilde{Y}$ is projective and, by induction on $c$, we can assume
\begin{equation*}
  \pi_1(\widetilde{Y}_{\rm{reg}})\cong\{1\}
\end{equation*}
Diagram (\ref{embedding}) can be replaced by the following projective embedding of $\widetilde{X}'$
\begin{equation*}
   \xymatrix{\widetilde{X}'\ar@{^(->}[rr]\ar@{^(->}[rd]&&\P^N\\
            &\widetilde{Y}\ar@{^(->}[ru]^j&}
 \end{equation*}
 so that $\widetilde{X}'=j^{-1}(H)$, for a suitable hyperplane $H\subset\P^N$.
 Apply now Theorem \ref{thm:G-MacP} by setting
 $\overline{Y}=\widetilde{Y}'$, $ Y=\widetilde{Y}'_{\rm{reg}}$\,. Quasi-smoothness of $\widetilde{X}'$ implies  that
 \begin{equation*}
 (j|_{\widetilde{Y}'_{\rm{reg}}})^{-1}(H)=\widetilde{Y}'_{\rm{reg}}\cap\widetilde{X}'\subseteq\widetilde{X}'_{\rm{reg}}
\end{equation*}
Therefore
\begin{equation*}
  \{1\}\cong\pi_1(\widetilde{Y}'_{\rm{reg}})\cong\pi_1(\widetilde{Y}'_{\rm{reg}}\cap\widetilde{X}')\twoheadrightarrow
  \pi_1(\widetilde{X}'_{\rm{reg}})\quad\Rightarrow\quad\pi_1(\widetilde{X}'_{\rm{reg}})\cong\{1\}
\end{equation*}
The last step, proving that $\pi_1(\widetilde{X}_{\rm{reg}})\cong\pi_1(\widetilde{X}'_{\rm{reg}})\cong\{1\}$, proceeds exactly
as in case $c=1$.

If $X$ is a MDS which is a quasi-smooth and very ample complete intersection, one can run the previous argument by taking $f$ as the
identity. Finally, the simply connectedness of the MDS $X$ is proved by setting $Y=\overline{Y}$, that is, by assuming
the closed subvariety in the statement of Theorem~\ref{thm:G-MacP}, as empty. Then apply the same inductive argument
by starting with $Y=Z$ and recalling that a complete toric variety is always simply connected, by
Corollary~\ref{cor:Grothendieck}.
\end{proof}

\begin{remark}
  Theorem~\ref{thm:WeakLefschetz} can be certainly generalized to admitting some further singularity for
  either $X$ or $X'$: in fact Goresky-MacPherson results are more general than Theorem~\ref{thm:G-MacP}, which
  presents a statement adapted to the case here considered. However, any such ge\-ne\-ra\-lization strongly depends on
  the kind of admitted singularities for $X$ and needs a careful application of deep and more general results due to
  Goresky-MacPherson and Hamm-L\^{e} (see \cite{G-MacPh}, \cite{Hamm-Le}).

  The reader is referred to \S~\ref{ex:universal} for an example of a MDS satisfying the hypotheses of Theorem~\ref{thm:WeakLefschetz}: it could be of some interest understanding how those hypotheses can be checked.
\end{remark}

\section{Examples and further applications}\label{sez:esempi}

This section is devoted to present examples of Mori dream spaces and their canonical 1-covering. The first example considers a MDS whose canonical 1-covering admits a neat
embedding in its canonical ambient toric variety, but unfortunately we are not able to conclude if it is the universal 1-covering, as hypotheses of Theorem~\ref{thm:WeakLefschetz} are not completely satisfied. On the contrary the second example presents a MDS whose canonical 1-covering admits a neat
embedding in its canonical ambient toric variety and it is also the universal 1-covering, as obtained by checking hypotheses of Theorem~\ref{thm:WeakLefschetz}. Finally, the third example presents a class of Mori Dream surfaces, namely those particular Enriques surfaces which are MDS, whose canonical 1-covering does not even admit a neat
embedding in a toric variety, as it is no more a MDS.

\subsection{An example by Hausen and Keicher}\label{ssez:HK}

Example here presented is obtained by considering, up to isomorphism, the Cox ring studied in
\cite[Ex.~2.1]{MDSpackage} and also listed in the \emph{Cox ring database} \cite{CRdb}, where it is reported as the id
no.~97.

Consider the grading map $d_K:K=\Z^8\twoheadrightarrow\Z^3\oplus\Z/2\Z$, whose free part is represented by the weight
matrix
\begin{equation*}
  Q=\left( \begin {array}{cccccccc} 2&1&0&2&0&2&1&0\\ 1&1&1&1&1&1&1&1\\ 0&0&0&1&1&2&2&2\end {array} \right) =\left(
             \begin{array}{ccc}
               \q_1 & \cdots & \q_8 \\
             \end{array}
           \right)
\end{equation*}
and whose torsion part is represented by the torsion matrix
\begin{equation*}
  \mathcal{T}=\left( \begin {array}{cccccccc}
  \overline{0}&\overline{0}&\overline{0}&\overline{0}&\overline{1}&\overline{1}&\overline{1}&\overline{1} \end {array}
  \right)
\end{equation*}
Then, consider the quotient algebra
$$R=\K[x_1,\ldots,x_8]/(x_1x_8+x_2x_7+x_3x_6+x_4x_5)$$
graded by $d_K$. This is consistent since the relation defining $R$ is homogeneous \wrt such a grading. Moreover $R$
turns out to be a Cox ring with $\X:=\{\overline{x}_1,\ldots,\overline{x}_8\}$ giving a Cox basis of $R$. Then
$\overline{X}:=\Spec(R)\subseteq \Spec\K[\x]=:\overline{W}$ defines the total coordinate space of a wMDS
$X:=\widehat{X}/G$ and its canonical ambient toric variety $W:=\widehat{W}/G$, where
\begin{eqnarray*}
  \widehat{X}&=&\overline{X}\backslash B_X\quad \text{being}\quad B_X=\mathcal{V}(\irr(X))\\
  \widehat{W}&=&\overline{W}\backslash \widetilde{B}\quad \text{being}\quad
  \widetilde{B}=\mathcal{V}(\widetilde{\irr})\\
  \irr(X)&=&\left(\begin{array}{c}
  \overline{x}_1\overline{x}_3\overline{x}_7,\overline{x}_1\overline{x}_5\overline{x}_6,
  \overline{x}_1\overline{x}_5\overline{x}_7,\overline{x}_2\overline{x}_4\overline{x}_8,
  \overline{x}_2\overline{x}_5\overline{x}_6,\overline{x}_2\overline{x}_6\overline{x}_8,
  \overline{x}_3\overline{x}_4\overline{x}_7,\overline{x}_3\overline{x}_4\overline{x}_8\\
  \overline{x}_1\overline{x}_2\overline{x}_7\overline{x}_8,
  \overline{x}_1\overline{x}_3\overline{x}_6\overline{x}_8,
  \overline{x}_1\overline{x}_4\overline{x}_5\overline{x}_8,
    \overline{x}_2\overline{x}_3\overline{x}_6\overline{x}_7,
  \overline{x}_2\overline{x}_4\overline{x}_5\overline{x}_7,
  \overline{x}_3\overline{x}_4\overline{x}_5\overline{x}_6
  \end{array}\right)\\
  \widetilde{\irr}&=&\left(\begin{array}{c}
  {x}_1{x}_3{x}_7,{x}_1{x}_5{x}_6,
  {x}_1{x}_5{x}_7,{x}_2{x}_4{x}_8,
  {x}_2{x}_5{x}_6,{x}_2{x}_6{x}_8,
  {x}_3{x}_4{x}_7,{x}_3{x}_4{x}_8\\
  {x}_1{x}_2{x}_7{x}_8,
  {x}_1{x}_3{x}_6{x}_8,
  {x}_1{x}_4{x}_5{x}_8,
    {x}_2{x}_3{x}_6{x}_7,
  {x}_2{x}_4{x}_5{x}_7,
  {x}_3{x}_4{x}_5{x}_6
  \end{array}\right)\\
  G&=&\Hom(\Cl(W),\K^*)\cong\Hom(\Z^3\oplus\Z/2\Z,\K^*)
\end{eqnarray*}
A Gale dual matrix of $Q$ is given by the following $CF$-matrix
\begin{equation*}
  \widetilde{V}=\left( \begin {array}{cccccccc} 1&0&0&0&-2&0&-2&3\\ 0&1&0&0&-2&0&-1&2\\ 0&0&1&0&-2&0&0&1\\
  0&0&0&1&-1&0&-2&2\\ 0&0&0&0&0&1&-2&1\end {array} \right) =\left(
             \begin{array}{ccc}
               \widetilde{\v}_1 & \cdots & \widetilde{\v}_8 \\
             \end{array}
           \right)
\end{equation*}
Notice that $Q$ is a Gale dual matrix of both $\widetilde{V}$ and the following $F$-matrix
\begin{equation*}
  V=\left( \begin {array}{cccccccc} 1&0&0&1&-3&0&-4&5\\ 0&1&0&1&-3&0&-3&4\\ 0&0&1&1&-3&0&-2&3\\ 0&0&0&2&-2&0&-4&4\\
  0&0&0&0&0&1&-2&1\end {array} \right)=\left(
             \begin{array}{ccc}
               \v_1 & \cdots & \v_8 \\
             \end{array}
           \right)
\end{equation*}
Moreover, it turns out that $\mathcal{T}\cdot V^T=\mathbf{0}$. Then $V$ is a fan matrix of $W$, while $\widetilde{V}$
is a fan matrix of the universal 1-covering $\widetilde{W}$ of $W$. In particular,
\begin{equation*}
  \widetilde{W}= \widehat{W}/H\quad\text{where}\quad H:=\Hom(\Cl(\widetilde{W}),\K^*)\cong\Hom(\Z^3,\K^*)
\end{equation*}
The canonical torsion free 1-covering $\widetilde{X}$ of $X$ is the given by $\widetilde{X}= \widehat{X}/H$. It is a
MDS whose canonical ambient toric variety is given by $\widetilde{W}$. In particular,
$\widetilde{i}:\widetilde{X}\hookrightarrow\widetilde{W}$ is a neat embedding. Notice that $X$ is quasi-smooth but it is not very ample: in fact it is not even a Cartier divisor of $W$ and, then, of any sharp completion $Z$ of $W$. Then, when $\K=\C$, hypotheses of Theorem~\ref{thm:WeakLefschetz} are not satisfied.

\subsection{An example satisfying hypotheses of Theorem~\ref{thm:WeakLefschetz}}\label{ex:universal}
Consider the grading map $d_K:K=\Z^5\twoheadrightarrow\Z^2\oplus\Z/3\Z$, whose free part is represented by the weight
matrix
\begin{equation*}
  Q=\left( \begin {array}{ccccc} 1&1&1&0&2\\ 0&2&1&1&1\end {array} \right) =\left(
             \begin{array}{ccc}
               \q_1 & \cdots & \q_5 \\
             \end{array}
           \right)
\end{equation*}
and whose torsion part is represented by the torsion matrix
\begin{equation*}
  \mathcal{T}=\left( \begin {array}{ccccc}
  \overline{1}&\overline{0}&\overline{1}&\overline{0}&\overline{0} \end {array}
  \right)
\end{equation*}
Then, consider the quotient algebra
$$R=\C[x_1,\ldots,x_5]/(x_1^3x_2^9+x_1^{12}x_4^{18}+x_2^6x_3^6+x_3^{12}x_4^6+x_4^{12}x_5^6)$$
graded by $d_K$. This is consistent since the relation defining $R$ is homogeneous \wrt such a grading. Moreover $R$
turns out to be a Cox ring with $\X:=\{\overline{x}_1,\ldots,\overline{x}_5\}$ giving a Cox basis of $R$. Then
$\overline{X}:=\Spec(R)\subseteq \Spec\C[\x]=:\overline{W}$ defines the total coordinate space of a wMDS
$X:=\widehat{X}/G$ and its canonical ambient toric variety $W:=\widehat{W}/G$, where
\begin{eqnarray*}
  \widehat{X}&=&\overline{X}\backslash B_X\quad \text{being}\quad B_X=\mathcal{V}(\irr(X))\\
  \widehat{W}&=&\overline{W}\backslash \widetilde{B}\quad \text{being}\quad
  \widetilde{B}=\mathcal{V}(\widetilde{\irr})\\
  \irr(X)&=&\left(\begin{array}{c}
  \overline{x}_1\overline{x}_2,\overline{x}_1\overline{x}_4,
  \overline{x}_2\overline{x}_3,\overline{x}_2\overline{x}_5,\overline{x}_3\overline{x}_4,\overline{x}_4\overline{x}_5
  \end{array}\right)\\
  \widetilde{\irr}&=&\left(\begin{array}{c}
  {x}_1{x}_2,{x}_1{x}_4,
  {x}_2{x}_3,{x}_2{x}_5,{x}_3 x_4,{x}_4{x}_5
  \end{array}\right)\\
  G&=&\Hom(\Cl(W),\C^*)\cong\Hom(\Z^2\oplus\Z/3\Z,\C^*)
\end{eqnarray*}
A Gale dual matrix of $Q$ is given by the following $CF$-matrix
\begin{equation*}
  \widetilde{V}=\left( \begin {array}{ccccc}  1&0&1&0&-1\\ 0&1&1&-2&-1\\ 0&0&2&-1&-1\end {array} \right) =\left(
             \begin{array}{ccc}
               \widetilde{\v}_1 & \cdots & \widetilde{\v}_5 \\
             \end{array}
           \right)
\end{equation*}
Notice that $Q$ is a Gale dual matrix of both $\widetilde{V}$ and the following $F$-matrix
\begin{equation*}
  V=\left( \begin {array}{ccccc} 1&0&5&-2&-3\\ 0&1&3&-3&-2\\ 0&0&6&-3&-3\end {array} \right)=\left(
             \begin{array}{ccc}
               \v_1 & \cdots & \v_5 \\
             \end{array}
           \right)
\end{equation*}
Moreover, it turns out that $\mathcal{T}\cdot V^T=\mathbf{0}$. Then $V$ is a fan matrix of $W$, while $\widetilde{V}$
is a fan matrix of the universal 1-covering $\widetilde{W}$ of $W$. In particular,
\begin{equation*}
  \widetilde{W}= \widehat{W}/H\quad\text{where}\quad H:=\Hom(\Cl(\widetilde{W}),\C^*)\cong\Hom(\Z^2,\C^*)
\end{equation*}
The canonical torsion free 1-covering $\widetilde{X}$ of $X$ is then given by $\widetilde{X}= \widehat{X}/H$. It is a
MDS whose canonical ambient toric variety is given by $\widetilde{W}$. In particular,
$\widetilde{i}:\widetilde{X}\hookrightarrow\widetilde{W}$ is a neat embedding.
Notice that $X$ is quasi-smooth because critical points of the generator
\begin{equation*}
  f=x_1^3x_2^9+x_1^{12}x_4^{18}+x_2^6x_3^6+x_3^{12}x_4^6+x_4^{12}x_5^6
\end{equation*}
are given in Cox coordinates by
\begin{equation*}
  \mathcal{C}=\{x_2 = x_4 = 0\}\cup \{x_1 = x_3= x_4 = 0\}\cup\{ x_1 = x_3 = x_5 = 0\}
\end{equation*}
which is the union of a curve and two points, namely $\mathcal{C}=l\cup \{p\}\cup\{q\}$. The curve $l$ and the point $q$ are contained in the unstable locus $B_X$, so that they do not give any singular point of $X$. The point $p$ is the special point of the maximal cone $\langle\v_1,\v_3,\v_4\rangle$ in the fan of $W$. Since
\begin{equation*}
  \det\left(
        \begin{array}{ccc}
          \v_1 & \v_2 & \v_3 \\
        \end{array}
      \right)=\left|
                      \begin{array}{ccc}
                        1 & 5 & -2 \\
                        0 & 3 & -3 \\
                        0 & 6 & -3 \\
                      \end{array}
      \right|=9
\end{equation*}
it follows that $p$ is a singular point of $W$, too. Then
$$\Sing(X)=\{p\}\subseteq\Sing(W)\cap X\quad\Longrightarrow\quad W_{\text{\rm reg}}\cap X\subseteq X_{\text{\rm reg}}$$
The canonical ambient toric variety $W$ is $\Q$-factorial and already complete, that is $W=Z$, and
$$\Nef(Z)=\left\langle\q_2,\q_3
\right\rangle$$
In particular, $f$ is homogeneous of degree $d_K(f)=[X]=\left(
                                                          \begin{array}{c}
                                                            12 \\
                                                            18 \\
                                                          \end{array}
                                                        \right)=6\,\q_2+6\,\q_3$, which is in the relative interior of $\Nef(X)$. Then $X$ is a $\Q$-ample divisor in $Z$.
The polytope associated with the divisor $X$ is given by
\begin{equation*}
  \Delta_X=\conv\left(
                  \begin{array}{cccccc}
                    -2 & -2 & -2 & -2 & 1 & 10\\
                    -2 & -2 & 4 & 6 & 7 & -2\\
                    1 & 7 & 1 & -3 & -5 & -5\\
                  \end{array}
                \right)=\conv(\pp_1,\ldots,\pp_6)
\end{equation*}
so giving that $X$ is Cartier and then ample. The check that $X$ is actually very ample is done by observing that
\begin{equation*}
  \forall\, i=1,\ldots,6\quad \N(\De_X\cap M -\pp_i)\ \text{is a \emph{saturated semigroup} in $M$}
\end{equation*}
Referring to \cite[Def.~1.3.4]{CLS} for the definition, the check is easily performed by noticing that
 \begin{eqnarray*}
   \De_X\cap M&=&\left\{(-2, -2, 7), (-2, -1, 6), (-1, -2, 6), (-2, -2, 6), (-2, 0, 5), (-1, -1, 5),\right. \\
  &&(-2, -1, 5), (0, -2, 5), (-1, -2, 5), (-2, -2, 5), (-2, 1, 4), (-1, 0, 4),\\
  &&(-2, 0, 4), (0, -1, 4), (-1, -1, 4), (-2, -1, 4), (1, -2, 4), (0, -2, 4),\\
  &&(-1, -2, 4), (-2, -2, 4), (-2, 2, 3), (-1, 1, 3), (-2, 1, 3), (0, 0, 3),\\
  &&(-1, 0, 3), (-2, 0, 3), (1, -1, 3), (0, -1, 3), (-1, -1, 3), (-2, -1, 3),\\
  &&(2, -2, 3), (1, -2, 3), (0, -2, 3), (-1, -2, 3), (-2, -2, 3), (-2, 3, 2),\\
  &&(-1, 2, 2), (-2, 2, 2), (0, 1, 2), (-1, 1, 2), (-2, 1, 2), (1, 0, 2),\\
  &&(0, 0, 2), (-1, 0, 2), (-2, 0, 2), (2, -1, 2), (1, -1, 2), (0, -1, 2),\\
  &&(-1, -1, 2), (-2, -1, 2), (3, -2, 2), (2, -2, 2), (1, -2, 2), (0, -2, 2),\\
  &&(-1, -2, 2), (-2, -2, 2), (-2, 4, 1), (-1, 3, 1), (-2, 3, 1), (0, 2, 1),\\
  &&(-1, 2, 1), (-2, 2, 1), (1, 1, 1), (0, 1, 1), (-1, 1, 1), (-2, 1, 1),\\
  &&(2, 0, 1), (1, 0, 1), (0, 0, 1), (-1, 0, 1), (-2, 0, 1), (3, -1, 1),\\
  &&(2, -1, 1), (1, -1, 1), (0, -1, 1), (-1, -1, 1), (-2, -1, 1), (4, -2, 1),\\
  &&(3, -2, 1), (2, -2, 1), (1, -2, 1), (0, -2, 1), (-1, -2, 1), (-2, -2, 1),\\
  &&(-1, 4, 0), (-2, 4, 0), (0, 3, 0), (-1, 3, 0), (-2, 3, 0), (1, 2, 0),\\
  &&(0, 2, 0), (-1, 2, 0), (-2, 2, 0), (2, 1, 0), (1, 1, 0), (0, 1, 0),\\
  &&(-1, 1, 0), (-2, 1, 0), (3, 0, 0), (2, 0, 0), (1, 0, 0), (0, 0, 0),\\
  &&(-1, 0, 0), (-2, 0, 0), (4, -1, 0), (3, -1, 0), (2, -1, 0), (1, -1, 0),\\
  &&(0, -1, 0), (-1, -1, 0), (5, -2, 0), (4, -2, 0), (3, -2, 0), (2, -2, 0),\\
  &&(1, -2, 0), (0, -2, 0), (-1, 5, -1), (-2, 5, -1), (0, 4, -1), (-1, 4, -1),\\
  &&(-2, 4, -1), (1, 3, -1), (0, 3, -1), (-1, 3, -1), (-2, 3, -1), (2, 2, -1),\\
  &&(1, 2, -1), (0, 2, -1), (-1, 2, -1), (-2, 2, -1), (3, 1, -1), (2, 1, -1),\\
  &&(1, 1, -1), (0, 1, -1), (-1, 1, -1), (4, 0, -1), (3, 0, -1), (2, 0, -1), \\
  &&(1, 0, -1), (0, 0, -1), (5, -1, -1), (4, -1, -1), (3, -1, -1), (2, -1, -1),\\
  &&(1, -1, -1), (6, -2, -1), (5, -2, -1), (4, -2, -1), (3, -2, -1), (2, -2, -1),\\
  &&(0, 5, -2), (-1, 5, -2), (-2, 5, -2), (1, 4, -2), (0, 4, -2), (-1, 4, -2),\\
     \end{eqnarray*}
  \begin{eqnarray*}
  &&(-2, 4, -2), (2, 3, -2), (1, 3, -2), (0, 3, -2), (-1, 3, -2), (3, 2, -2),\\
  &&(2, 2, -2), (1, 2, -2), (0, 2, -2), (4, 1, -2), (3, 1, -2), (2, 1, -2),\\
  &&(1, 1, -2), (5, 0, -2), (4, 0, -2), (3, 0, -2), (2, 0, -2), (6, -1, -2),\\
  &&(5, -1, -2), (4, -1, -2), (3, -1, -2), (7, -2, -2), (6, -2, -2), (5, -2, -2),\\
  &&(4, -2, -2), (0, 6, -3), (-1, 6, -3), (-2, 6, -3), (1, 5, -3), (0, 5, -3),\\
  &&(-1, 5, -3), (2, 4, -3), (1, 4, -3), (0, 4, -3), (3, 3, -3), (2, 3, -3),\\
  &&(1, 3, -3), (4, 2, -3), (3, 2, -3), (2, 2, -3), (5, 1, -3), (4, 1, -3),\\
  &&(3, 1, -3), (6, 0, -3), (5, 0, -3), (4, 0, -3), (7, -1, -3), (6, -1, -3),\\
  &&(5, -1, -3), (8, -2, -3), (7, -2, -3), (6, -2, -3), (1, 6, -4), (0, 6, -4),\\
  &&(2, 5, -4), (1, 5, -4), (3, 4, -4), (2, 4, -4), (4, 3, -4), (3, 3, -4),\\
  &&(5, 2, -4), (4, 2, -4), (6, 1, -4), (5, 1, -4), (7, 0, -4), (6, 0, -4),\\
  &&(8, -1, -4), (7, -1, -4), (9, -2, -4), (8, -2, -4), (1, 7, -5), (2, 6, -5),\\
  &&(3, 5, -5), (4, 4, -5), (5, 3, -5), (6, 2, -5), (7, 1, -5), (8, 0, -5),\\
  &&\left.(9, -1, -5), (10, -2, -5)\right\}
 \end{eqnarray*}
 Then by Theorem~\ref{thm:WeakLefschetz}, $\widetilde{X}\longrightarrow X$ is the universal 1-covering of $X$.

 Let us point out to the reader, that we processed all the data of this example through the Maple package \cite{MDSpackage}.

\subsection{Mori Dream Enriques surfaces}\label{ssez:Enriques}

An Enriques surface is a complex projective smooth surface $X$ with $q(X)=p_g(X)=0$, $2K_X\sim 0$ but $K_X\not\sim 0$.
There are several well known facts about Enriques surfaces, few of them are here recalled:

\begin{proposition}[\S~VIII.15 in \cite{BPVdV}]\label{prop:Enriques}
  Let $X$ be an Enriques surface. Then
  \begin{enumerate}
    \item $\Cl(X)\cong\Z^{10}\oplus\Z/2\Z$, the torsion part being generated by the canonical class $[K_X]$; then
        $X$ has rank $r=10$;
    \item the fundamental group of $X$ is $\pi_1(X)\cong\Z/2\Z$;
    \item if $\widetilde{X}\twoheadrightarrow X$ is the universal covering space of $X$, then $\widetilde{X}$ is a
        \emph{K3 surface}, that is a complex smooth projective surface with $K_X\sim0$ and $q(X)=0$.
  \end{enumerate}
\end{proposition}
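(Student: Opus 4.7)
The plan is to deduce the three parts of Proposition~\ref{prop:Enriques} from the defining numerical invariants of an Enriques surface, together with a key input from surface theory (the simple connectedness of K3 surfaces), and to proceed in the order (3) $\Rightarrow$ (2) $\Rightarrow$ (1), since item (3) produces the universal cover that controls the topology in (2), which then feeds the torsion computation in (1).

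For part (3), I would exploit the nontrivial torsion class $[K_X]\in\Pic(X)$ of order $2$ (given by $2K_X\sim 0$ and $K_X\not\sim 0$). This class corresponds to a connected \'etale double cover $\pi:\widetilde X\to X$ associated to the $\cO_X$-algebra $\cO_X\oplus\cO_X(K_X)$, whose algebra structure is provided by any isomorphism $\cO_X(2K_X)\cong\cO_X$. Being \'etale, $\pi^*K_X=K_{\widetilde X}$. On the other hand, in the pullback $\pi^*:\Pic(X)\to\Pic(\widetilde X)$ the class $[K_X]$ lies in the kernel: by construction the cover trivializes precisely the $\Z/2\Z$-torsor defined by $[K_X]$, so $\pi^*[K_X]=0$ in $\Pic(\widetilde X)$. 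Hence $K_{\widetilde X}\sim 0$. To upgrade $\widetilde X$ to a K3 I would compute $\chi(\cO_{\widetilde X})=2\chi(\cO_X)=2$ (multiplicativity of Euler characteristics under \'etale covers, using $\chi(\cO_X)=1-q(X)+p_g(X)=1$) and observe that $K_{\widetilde X}\sim 0$ forces $p_g(\widetilde X)=h^0(\cO_{\widetilde X}(K_{\widetilde X}))=1$; the equation $2=1-q(\widetilde X)+p_g(\widetilde X)=2-q(\widetilde X)$ then yields $q(\widetilde X)=0$, which together with $K_{\widetilde X}\sim 0$ makes $\widetilde X$ a K3 surface.

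For part (2), I would invoke the classical fact that every K3 surface is simply connected (this is the main external input and, I expect, the main obstacle to a self-contained treatment; the usual argument is the Kodaira-style comparison with the smooth quartic in $\P^3$ via deformation/diffeomorphism of K3 surfaces, together with the Lefschetz hyperplane theorem applied to the quartic). Granted $\pi_1(\widetilde X)\cong\{1\}$, the \'etale double cover $\pi:\widetilde X\to X$ built in part~(3) is in fact the topological universal covering of $X$, so $\pi_1(X)\cong \Z/2\Z$.

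For part (1), I would run the exponential exact sequence
$$H^1(X,\Z)\to H^1(X,\cO_X)\to \Pic(X)\to H^2(X,\Z)\to H^2(X,\cO_X).$$
Since $q(X)=h^1(\cO_X)=0$ and $p_g(X)=h^2(\cO_X)=0$, this collapses to an isomorphism $\Pic(X)\cong H^2(X,\Z)$, and hence $\Cl(X)=\Pic(X)\cong H^2(X,\Z)$ (as $X$ is smooth). The Noether formula with $K_X^2=0$ and $\chi(\cO_X)=1$ gives $c_2(X)=12$, and combined with $b_1(X)=0$ (from~(2), via $H_1(X,\Z)\cong\pi_1(X)^{\text{ab}}=\Z/2\Z$ and universal coefficients) this yields $b_2(X)=10$. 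The universal coefficient theorem then furnishes $\mathrm{Tors}(H^2(X,\Z))\cong \mathrm{Ext}^1(H_1(X,\Z),\Z)\cong\Z/2\Z$, whence $\Cl(X)\cong \Z^{10}\oplus\Z/2\Z$. Finally, under the identification $\Pic(X)\cong H^2(X,\Z)$, the $\Z/2\Z$-torsion is generated by $[K_X]$: indeed, $[K_X]$ is a nontrivial $2$-torsion class in $\Pic(X)$ by hypothesis and the torsion subgroup is cyclic of order~$2$, so the generator is forced. The Picard number $r=10$ is read off the free part.
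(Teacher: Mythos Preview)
Your argument is correct and follows the classical route one finds in the cited reference. Note, however, that the paper does not supply its own proof of this proposition: it is stated with attribution to \S~VIII.15 of \cite{BPVdV} and used as a known input, so there is no in-paper argument to compare against. What you have written is essentially the standard textbook derivation (double cover from the $2$-torsion canonical class, Euler-characteristic and Noether computations, exponential sequence plus universal coefficients), with the one genuinely nontrivial external ingredient---the simple connectedness of K3 surfaces---correctly flagged as such.
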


Enriques surfaces which are MDS are very special inside the 10-dimensional moduli space of Enriques surfaces. In fact
they correspond to those admitting a finite automorphism group \cite[Thm.~5.1.3.12]{ADHL} and explicitly classified by
Kondo \cite{Kondo}: namely they consist of two 1-dimensional fa\-mi\-lies and five 0-dimensional fa\-mi\-lies (see
also \cite[Thm.~5.1.6.1]{ADHL}).
The following result was firstly conjectured by Dolgachev \cite[Conj.~4.7]{Dolgachev} and then proved by Kondo
\cite[Cor.~6.3]{Kondo}.

\begin{theorem}[Dolgachev-Kondo]\label{thm:D-K}
  Let $X$ be an Enriques surface and $\widetilde{X}$ its K3 universal covering. Then $\Aut(\widetilde{X})$ is
  infinite.
\end{theorem}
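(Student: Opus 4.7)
The plan is to reduce the statement to a lattice-theoretic property of $\mathrm{NS}(\widetilde{X})$ via the global Torelli theorem and then to invoke the Nikulin--Vinberg classification of reflective hyperbolic lattices. First I would recall that the Enriques covering involution $\iota\in\Aut(\widetilde{X})$ is non-symplectic, because $p_g(X)=0$ forces $\iota^*$ to act as $-1$ on the holomorphic $2$-form of $\widetilde{X}$. Consequently the $\iota^*$-invariant part of $H^2(\widetilde{X},\Z)$ lies in the N\'{e}ron--Severi lattice $\mathrm{NS}(\widetilde{X})$; an elementary computation identifies it with the twisted Enriques lattice $U(2)\oplus E_8(-2)$, primitively embedded in $\mathrm{NS}(\widetilde{X})$. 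In particular $\rho(\widetilde{X})\geq 10$, and this hyperbolic rank-$10$ sublattice supplies an abundant collection of isometries to work with.

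Second, I would invoke the global Torelli theorem for K3 surfaces in the following standard form: the natural homomorphism $\Aut(\widetilde{X})\to O(\mathrm{NS}(\widetilde{X}))$ has finite kernel, and $\Aut(\widetilde{X})$ is commensurable with the quotient $O^+(\mathrm{NS}(\widetilde{X}))/W(\mathrm{NS}(\widetilde{X}))$, where $W$ denotes the subgroup generated by reflections in classes of self-intersection $-2$. In this way the problem is reduced to showing that $O^+(L)/W(L)$ is infinite for $L:=\mathrm{NS}(\widetilde{X})$.

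The hard part is precisely this lattice-theoretic verification, and it is the technical core of Kondo's original argument. The Nikulin--Vinberg classification enumerates, in each rank $\geq 3$, the finitely many even hyperbolic lattices whose orthogonal group is generated, up to finite index, by $(-2)$-reflections; one must then check that no N\'{e}ron--Severi lattice arising from an Enriques covering lies in this exceptional list. The argument uses that $U(2)\oplus E_8(-2)$, always primitively embedded in $\mathrm{NS}(\widetilde{X})$, has a discriminant form that rules out every candidate on the list; any extension of this lattice by further $\mathrm{NS}$-classes in the case $\rho(\widetilde{X})>10$ only enlarges $O^+(L)$ while leaving $W(L)$ under sufficient control, preserving the conclusion. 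By contrast, the Torelli reduction and the identification of the $\iota^*$-invariant sublattice with the twisted Enriques lattice are standard K3 techniques and present no essential difficulty.
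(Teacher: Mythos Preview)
The paper does not supply its own proof of this theorem: it is quoted as a result of Kondo \cite[Cor.~6.3]{Kondo}, conjectured earlier by Dolgachev, and is used as a black box in the subsequent discussion of Mori dream Enriques surfaces. There is therefore no ``paper's proof'' to compare your proposal against.

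That said, your outline is a reasonable sketch of how such a statement is proved. The identification of the $\iota^*$-invariant sublattice with $U(2)\oplus E_8(-2)$ and the Torelli reduction to the reflectivity question for $\mathrm{NS}(\widetilde{X})$ are both standard and correct. The one place where your sketch is genuinely loose is the last step: the assertion that passing from $U(2)\oplus E_8(-2)$ to an overlattice $\mathrm{NS}(\widetilde{X})$ of possibly higher rank ``only enlarges $O^+(L)$ while leaving $W(L)$ under sufficient control'' is not an argument---overlattices can acquire many new $(-2)$-classes, and the quotient can in principle shrink. In practice this step is handled by checking, rank by rank, against Nikulin's finite list of $2$-reflective hyperbolic lattices (or, as in Kondo's original paper, by a case analysis tied to his classification of Enriques surfaces with finite automorphism group). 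You correctly flag this as the technical core, but as written that paragraph is a statement of intent rather than a proof.
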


Since an Enriques surface $X$ is smooth, the canonical 1-covering of $X$, whose existence is guaranteed by
Theorem~\ref{thm:notors1-cov} when $X$ is a MDS, is actually unramified by Corollary~\ref{cor:ZNpurity}, so giving
precisely the universal topological covering $\phi:\widetilde{X}\twoheadrightarrow X$. Then
Dolgachev-Kondo Theorem \ref{thm:D-K} implies that $\widetilde{X}$ cannot be a MDS, by \cite[Thm.~5.1.3.12]{ADHL}, that is \emph{the
canonical closed embedding $\widetilde{X}\hookrightarrow \widetilde{W}$ cannot be a neat embedding}.

Anyway, Theorem~\ref{thm:notors1-cov} allows us to conclude some interesting properties of the canonical toric
embedding $X\hookrightarrow W$, of a Mori Dream Enriques surface $X$, and its lifting to canonical 1-coverings
$\widetilde{X}\hookrightarrow\widetilde{W}$, summarized as follows:

\begin{corollary}\label{cor:Enriques}
  Let $X$ be a Mori Dream Enriques surface, $i:X\hookrightarrow W$ its canonical toric embedding and consider the
  natural commutative diagram of embeddings and 1-coverings:
\begin{equation*}
          \xymatrix{\widetilde{X}\ar@{->>}^\phi[d]\ar@{^(->}[r]^-{\widetilde{i}}&\widetilde{W}
          \ar@{->>}[d]^-\vf\\
                    X\ar@{^(->}[r]^-i&W}
        \end{equation*}
        Then:
        \begin{enumerate}
          \item the canonical 1-covering $\phi:\widetilde{X}\twoheadrightarrow X$ is the universal (1-)covering of
              $X$,
          \item $\Cl(\widetilde{X})$ and $\Cl(\widetilde{W})$ are free groups,
   \item $\widetilde{r}:=\rk(\Cl(\widetilde{X})>r:=\rk(\Cl(X))=\rk(\Cl(W))=
   \rk(\Cl(\widetilde{W}))=10$,
   \item both $X$ and $W$ have torsion Picard group,
   \item both the  toric ambient varieties $W$ and $\widetilde{W}$ do not admit any fixed point by the torus
       action.
        \end{enumerate}
\end{corollary}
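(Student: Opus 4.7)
The strategy is to cascade the equivalences of Theorem~\ref{thm:notors1-cov} with classical properties of Enriques and K3 surfaces, using the Dolgachev--Kondo Theorem~\ref{thm:D-K} as the key non-triviality input. For item~(1), I would first invoke Lemma~\ref{lem:estensione}: since $X$ is smooth, any 1-covering of $X$ is automatically \ét, so $\phi:\widetilde{X}\twoheadrightarrow X$ is an \ét Galois covering of degree $|\Tors(\Cl(X))|=2$. Proposition~\ref{prop:univ-1-cov} then reduces universality of $\phi$ to the simple connectedness of $\widetilde{X}_{\text{reg}}$; but by Proposition~\ref{prop:Enriques}(3) the covering $\widetilde{X}$ is a K3 surface, hence smooth and simply connected ($\widetilde{X}_{\text{reg}}=\widetilde{X}$, and $\pet(\widetilde{X})$ vanishes because it coincides with the pro-finite completion of $\pi_1(\widetilde{X}^{\text{an}})\cong\{1\}$ by Remark~\ref{rem:K=C}), settling~(1). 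For~(2), freeness of $\Cl(\widetilde{W})$ is part of Theorem~\ref{thm:QUOT+} itself, while freeness of $\Cl(\widetilde{X})=\Pic(\widetilde{X})$ is the classical fact that the N\'eron--Severi group of a K3 surface is torsion-free (since $h^{0,1}=0$ forces $\Pic=NS$, and $NS$ embeds in the torsion-free $H^2(\widetilde{X},\Z)$).

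For item~(3), the rank equalities $\rk(\Cl(X))=\rk(\Cl(W))=\rk(\Cl(\widetilde{W}))=10$ follow respectively from Proposition~\ref{prop:Enriques}(1), Proposition~\ref{prop:neat} (the neat pullback $i^*:\Cl(W)\stackrel{\cong}{\to}\Cl(X)$), and Theorem~\ref{thm:QUOT+} (whose epimorphism $\vf^*:\Cl(W)\twoheadrightarrow\Cl(\widetilde{W})$ has torsion kernel). Proving the strict inequality $\widetilde{r}>10$ is the heart of the argument: first, by Lemma~\ref{lem:ker}, the pull-back $\phi^*:\Cl(X)\to\Cl(\widetilde{X})$ has kernel $\Tors(\Cl(X))$, so $\Cl(X)/\Tors\cong\Z^{10}$ injects into $\Cl(\widetilde{X})$, giving $\widetilde{r}\geq 10$; then I would argue by contradiction assuming $\widetilde{r}=10$. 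Combined with freeness of $\Cl(\widetilde{X})$ from part~(2), this is exactly condition~(2) of Theorem~\ref{thm:notors1-cov}, so the equivalence (2)$\Leftrightarrow$(3) forces $\widetilde{X}$ to be a wMDS. Since $\phi$ is finite and $X$ is projective, $\widetilde{X}$ is projective, so ``wMDS'' upgrades to ``MDS''. But then $\Aut(\widetilde{X})$ would be finite (the characterisation of MDS by finiteness of the automorphism group, ADHL Thm~5.1.3.12), contradicting the Dolgachev--Kondo Theorem~\ref{thm:D-K}; hence $\widetilde{r}>10$. Item~(4) is then immediate: Proposition~\ref{prop:neat} gives $\Pic(W)\cong\Pic(X)=\Cl(X)\cong\Z^{10}\oplus\Z/2\Z$, which carries non-trivial torsion.

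Item~(5) is the main obstacle. By the construction $\widetilde{V}=\G(\G(V))$ of Theorem~\ref{thm:QUOT+}, the fans of $W$ and $\widetilde{W}$ share the same combinatorial type $\I_\Si=\I_{\widetilde{\Si}}$, so the two assertions reduce to a single one: no maximal cone of $\Si$ is $n$-dimensional, equivalently (via the Cox presentation together with Proposition~\ref{prop:neighborly}) the lifted irrelevant ideal $\widetilde{\irr}$ contains no monomial in fewer than $n+1$ variables. My plan is to derive this from Proposition~\ref{prop:irr}(3) together with the codimension estimate $\codim_{\overline{W}}\widetilde{B}=\codim_{\overline{X}}B_X+l$, where $l=\codim_W X$, by invoking the explicit structure of $\Cox(X)$ furnished by Kondo's classification and ADHL Thm~5.1.6.1: for each of the two one-parameter and five isolated families of Mori Dream Enriques surfaces, the irrelevant ideal is generated in degrees strictly larger than the Picard rank $r=10$. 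The hard part here is precisely this last case-by-case verification; a fully conceptual proof would presumably have to exploit more of the geometry behind the infinitude of $\Aut(\widetilde{X})$ (for instance, that $\widetilde{X}$ carries infinitely many distinct small $\Q$-factorial modifications, whereas a neat completion of $\widetilde{W}$ visible through a torus-fixed chart could encode only finitely many), and I would regard finding such an argument as the natural next step.
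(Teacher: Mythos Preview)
Your treatment of items (1)--(4) is correct and follows essentially the same route as the paper, only with more detail spelled out (e.g.\ invoking Proposition~\ref{prop:univ-1-cov} and Lemma~\ref{lem:ker} explicitly, where the paper is terse).

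The genuine gap is in item~(5). You reach for a case-by-case computation of the irrelevant ideal via Kondo's classification, which you yourself flag as the ``hard part'' and do not carry out; your proposed conceptual alternative via infinitely many s$\Q$m's is vague and not obviously workable. But none of this is needed: the paper's proof of (5) is a one-line consequence of item~(4). The relevant standard toric fact is that \emph{if the fan $\Si$ of a toric variety $W$ contains a cone of full dimension $n=\dim W$, then $\Pic(W)$ is torsion-free}. (Reason: a torus-invariant Cartier divisor $D$ is encoded by support data $\{m_\sigma\in M\}$; on a full-dimensional $\sigma$ the element $m_\sigma$ is uniquely determined, and if $kD=\operatorname{div}(\chi^{m})$ then comparing on $\sigma(1)$, which spans $N_\R$, forces $m=k\,m_\sigma$, whence $D=\operatorname{div}(\chi^{m_\sigma})$ is already principal.) Since you have just shown in~(4) that $\Pic(W)\cong\Pic(X)\cong\Z^{10}\oplus\Z/2\Z$ carries nontrivial torsion, $\Si$ can contain no $n$-dimensional cone, i.e.\ $W$ has no torus fixed point. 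You already observed that $\I_\Si=\I_{\widetilde{\Si}}$, so the same conclusion transfers to $\widetilde{W}$. Replace your entire argument for~(5) with this.

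A minor side remark: your attempted reformulation of ``no full-dimensional cone'' in terms of monomials in $\widetilde{\irr}$ having ``fewer than $n+1$ variables'' is miscounted. For a simplicial fan, a maximal cone $\sigma$ of dimension $d$ contributes the monomial $\prod_{\rho\notin\sigma(1)}x_\rho$ in $m-d$ variables; so a full-dimensional cone corresponds to a monomial in $m-n=r$ variables, not $\le n$. This does not affect the corrected proof above, but it would have derailed the approach you were attempting.
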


\begin{proof}
(1) follows by the smoothness of $X$ and Zariski-Nagata purity, namely by Corollary~\ref{cor:ZNpurity}.

(2) follows by Theorem~\ref{thm:QUOT+} for what's concerning the universal 1-covering $\widetilde{W}$, while it is a
classically well known fact for what's concerning the universal topological $K3$ covering $\widetilde{X}$.

(3) follows by the Dolgachev-Kondo Theorem~\ref{thm:D-K}, keeping in mind the equivalent conditions (2) and (3) in the
statement of Theorem~\ref{thm:notors1-cov}.

(4) is Proposition~\ref{prop:Enriques}~(1), for what's concerning $X$, and follows by
Proposition~\ref{prop:neat} when recalling that the canonical toric embedding $X\hookrightarrow W$ is neat.

(5) for $W$ is a consequence of item (2). In fact, since $\Pic(W)$ admits a non-trivial torsion subgroup,
the fan $\Si$ of $W$ cannot admit maximal cones of full dimension $\dim(W)$, that is $W$ cannot admit any fixed point
under the torus action. This fact lifts to the universal 1-covering $\widetilde{W}$ by the construction of its fan
$\widetilde{\Si}$ as explained by (\ref{Sigmatilde}) in the proof of Theorem~\ref{thm:QUOT+}.
\end{proof}

\begin{remark}\label{rem:}
Let us emphasize that Corollary~\ref{cor:Enriques} implies that
 the universal $K3$ covering of a Mori Dream Enriques surface (that is an Enriques surface with finite
    automorphism group) admits a canonical embedding as a smooth subvariety of a $\Q$-factorial toric variety,
    whose class group is a free abelian group of rank 10 and whose torus action does not admit any fixed point.
\end{remark}

\begin{acknowledgements}
I would like to thank Lea Terracini, Cinzia Casagrande and Antonio Laface for several fruitful discussions and
suggestions. I also would like to thank J.~Buczy{\'n}sky and W.~Buczy{\'n}ska for inviting me to visit the I.M.P.A.N. of the S.~Banach International Mathematical Centre in Warsaw, where we had very interesting conversations which aided me to improve some of the results here presented.

Last but not least, many thanks go to the Referees for their careful reading of the manuscript and useful suggestions helping me in fixing many inaccuracies.
\end{acknowledgements}

\halfline
\noindent \textbf{Data availability}
The only database used in carrying out the present study is available at \cite{CRdb}, as reported into the References.

\halfline
\noindent \textbf{Competition or conflict of interests}
The Author certifies that no competition or conflict of interests can in any way be connected to the publication of this article.

\bibliography{MILEA}
\bibliographystyle{acm}

\end{document}